\newtheorem{theorem}{Theorem}
\newtheorem{corollary}[theorem]{Corollary}
\newtheorem{lemma}[theorem]{Lemma}
\newtheorem{proposition}[theorem]{Proposition}
\newenvironment{proof}[1][Proof]{\noindent\textbf{#1.} }{\ \rule{0.5em}{0.5em}}
\def\lab(#1)#2{\put(#1){\makebox(0,0)[c]{#2}}}
\begin{document}

\title{An exact algorithm for the bottleneck 2-connected $k$-Steiner network problem in $L_p$ planes}
\author{M. Brazil\and C.J. Ras \and D.A. Thomas}

\maketitle

\begin{abstract}
We present the first exact polynomial time algorithm for constructing optimal geometric bottleneck $2$-connected Steiner networks containing at most $k$ Steiner points,
where $k>2$ is a constant. Given a set of $n$ vertices embedded in an $L_p$ plane, the objective of the problem is to find a $2$-connected network, spanning the given vertices and at most $k$ additional vertices, such that the length of the longest edge is minimised. In contrast to the discrete version of this problem the additional vertices may be located anywhere in the plane. The problem is motivated by the modelling of relay-augmentation for the
optimisation of energy consumption in wireless ad hoc networks. Our algorithm employs Voronoi diagrams and properties of block-cut-vertex decompositions of graphs to find an optimal solution in $O(n^k\log^{\frac{5k}{2}}n)$ steps when $1<p<\infty$ and in $O(n^2\log^{\frac{7k}{2}+1}n)$ steps when $p\in\{1,\infty\}$.
\end{abstract}

\section{Introduction}
Reducing energy consumption due to data transmission is a primary concern when designing wireless radio networks, since, especially in the case of autonomous ad hoc networks such as sensor networks, node failure due to battery depletion must be postponed for as long as possible. Generally, maximum transmission power is utilised at the nodes communicating across the \textit{bottleneck} (or longest link) of the network. In ad hoc networks, the process of \textit{relay-augmentation} has proven to be effective at optimising the bottleneck length \cite{bae1,brazil}.

Given a set of transmitters in the plane, the primary goal of relay augmentation is to construct a network of minimum bottleneck length by introducing new transmitters (relays) and links. The resultant network (\textit{augmented network}) must satisfy a given connectivity constraint and may contain at most a bounded number of relays. The connectivity constraint stipulates the minimum number of transmitters (including relays) that may fail before the network becomes disconnected. An upper bound on the number of relays is not only realistic in practice, but is also necessary for guaranteeing that a solution to the relay-augmentation problem exists, since, in the limit one can always reduce the bottleneck length by deploying an extra relay.

\begin{figure}[htb]
  \begin{center}
    \includegraphics[scale=0.6]{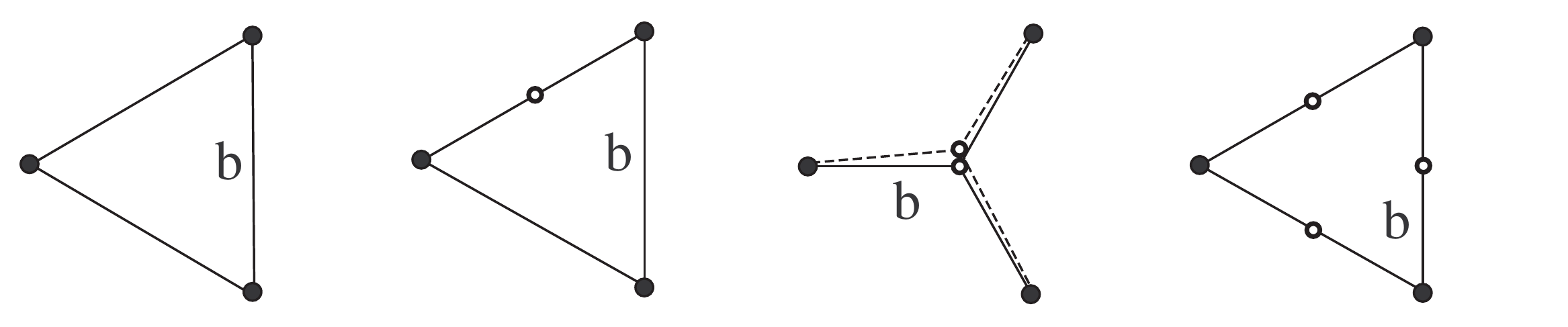}\\
  \end{center}
  \caption{Optimal solutions when $n=3$, $c=2$, and $k=0,1,2,3$}
  \label{figEgs1}
\end{figure}

An appropriate model for relay augmentation is the \textit{bottleneck $c$-connected $k$-Steiner network problem}: the given transmitters are represented by vertices called \textit{terminals} embedded in an $L_p$ plane, relays are represented by variable \textit{Steiner points} in the plane, $k$ is the above-mentioned bound on the number of relays, and $c$ is the connectivity constraint. Figure \ref {figEgs1} illustrates optimal solutions to this problem for four cases of three equilaterally positioned terminals (filled circles) in the Euclidean plane when $c=2$ and $k=0,1,2,3$ respectively. The third subfigure depicts a solution where both Steiner points (open circles) occupy the same location at the centre of the triangle. In each case an arbitrarily selected (from equally long edges) bottleneck edge is labelled by the letter `b'.

The bottleneck $1$-connected $k$-Steiner network problem is NP-hard in both the Euclidean and rectilinear planes when $k$ is part of the input \cite{sarr,wang}; and also in general metrics when the number of Steiner points is not explicitly bounded, but the minimum degree of any Steiner point is at least $3$ (see \cite{berm}). There exist exact polynomial time algorithms for constructing optimal $1$-connected augmented networks when $k$ is constant (i.e., not part of the input); see \cite{bae1,brazil}. No complexity results are to be found in the literature for $c>1$.

In practice, $1$-connectivity is often insufficient. Practical networks require a degree of survivability against the inevitable disruption or failure of nodes or links. On the other hand, for most networks $2$-connectivity is sufficient to provide engineers with enough confidence that the network will not disconnect within the time-period between node failure and subsequent node replacement \cite{mon}. In general, $2$-connectivity is therefore the most cost-efficient and popular option. The mathematical literature most closely related to the survivability aspect of the problem we study here deals with the construction of so called \textit{bottleneck biconnected spanning subgraphs} (see
\cite{mschang,park}); this problem is motivated by the search for heuristics for the bottleneck Travelling Salesman problem.

In this paper we describe a polynomial time algorithm (which we refer to as the $2$-\textit{Bottleneck algorithm}) for solving the bottleneck $2$-connected $k$-Steiner network problem in $L_p$ planes, when $k$ is constant. This may be viewed as a generalisation of both the $c=1$ case as solved by Bae et al. \cite{bae1} and Brazil et al. \cite{brazil}; and of the $c=2$ and $k\leq 2$ case solved in \cite{brazil2}. We rely on an essential geometric component of Bae et al.'s algorithm utilising \textit{farthest colour Voronoi diagrams}, although it is a non-trivial fact that their method extends to the $c=2$ case. Also, in Brazil et al. \cite{brazil2} a process employing binary search is developed to solve one of their subcases. A substantial part of this paper involves a generalisation of this process to $k>2$ Steiner points.

\section{Overview of the $2$-Bottleneck algorithm}\label{over}
Before presenting a broad overview of our algorithm for constructing optimal solutions to the bottleneck $2$-connected $k$-Steiner network problem, we present some basic terminology and then formally define the problem. A general graph concept that is ubiquitous in this paper is that of a \textit{topology}; the topology of a graph is equivalent to the adjacency matrix of its vertices. In a \textit{geometric graph} all vertices have coordinates and there exists a geometric description for each curve representing an edge. In this paper, edges of geometric graphs are always geodesics. Edges incident to a Steiner point are called \textit{Steiner edges}; all other edges are called \textit{terminal edges}. Some of the graphs we discuss are \textit{partly geometric}, in the sense that the terminals have pre-assigned coordinates but the Steiner points do not. This concept is, in fact, central to the construction of optimal bottleneck Steiner networks.

For any graph $G$ in the plane we denote the length of the longest edge of $G$ (with respect to some $L_p$ metric) by $\ell_{\mathrm{max}}(G)$. Let $X$ be a set of vertices (called \textit{terminals}) embedded in $\mathbb{R}^2$.

\noindent\textbf{Definition.} \textit{The \textbf{bottleneck $c$-connected $k$-Steiner network problem} requires the construction of a $c$-connected network $N^*$ spanning both $X$
and a set $S$ of at most $k$ Steiner points, such that $\ell_{\mathrm{max}}(N^*)$ is a minimum across all such networks. The variables are $S$ and the topology of the network.}

An optimal solution to the problem is called a \textit{globally optimal network}. If $Z$ is any set of $2$-connected networks spanning $X$ and at most $k$ Steiner points, then any $N\in Z$ such that $\ell_{\mathrm{max}}(N)\leq \ell_{\mathrm{max}}(N')$ for all $N'\in Z$ is called a \textit{locally optimal network with respect to} $Z$; we also refer to $N$ as a \textit{cheapest network in} $Z$. Similarly, the topology of a globally (locally) optimal network is referred as a \textit{globally (locally) optimal topology}. In this paper we focus on the case $c=2$ with constant $k\geq 3$. We also assume throughout that $|X|=n\geq 2$.

In broad terms, for a given set $X$ of terminals, our algorithm constructs every possible $2$-connected \textit{candidate} topology spanning both $X$ and a set of at most $k$ variable Steiner points. The number of these partly geometric topologies is super-exponential in $|X|$, however, we show how to reduce this to polynomial order. For each candidate topology $N$, coordinates are then assigned to the Steiner points in such a way that the resultant graph has the shortest bottleneck amongst all geometric graphs (on $X$) with topology $N$; this step requires the use of farthest colour Voronoi diagrams (as in \cite{bae1}). Among all resultant graphs, one with the shortest bottleneck is picked as the optimal solution.

We reduce the complexity of the above algorithmic framework further by strategically dividing the set of all candidate topologies into a small number of different \textit{types}. For each type $G(\mathcal{Q})$ we then show that there exists a fast procedure for constructing a geometric graph $N$ with topology of type $G(\mathcal{Q})$, such that $N$ has a shortest bottleneck amongst all geometric graphs with topology of type $G(\mathcal{Q})$. As before, a resultant geometric graph with shortest bottleneck is selected as the globally optimal solution. The division into types is based on a process described in \cite{bae1}, where so called ``abstract topologies" play a similar role. Describing \textit{types} for the bottleneck $2$-connected $k$-Steiner network problem is significantly more complex than for the $1$-connected problem studied by Bae et al., and we therefore defer a detailed discussion to a later section. At this juncture we provide only the following simple illustrative example of a candidate type in the Euclidean plane, and show how the concept is used in this case to optimally locate a Steiner point.

\begin{figure}[h!]
  \begin{center}
    \includegraphics[scale=0.55]{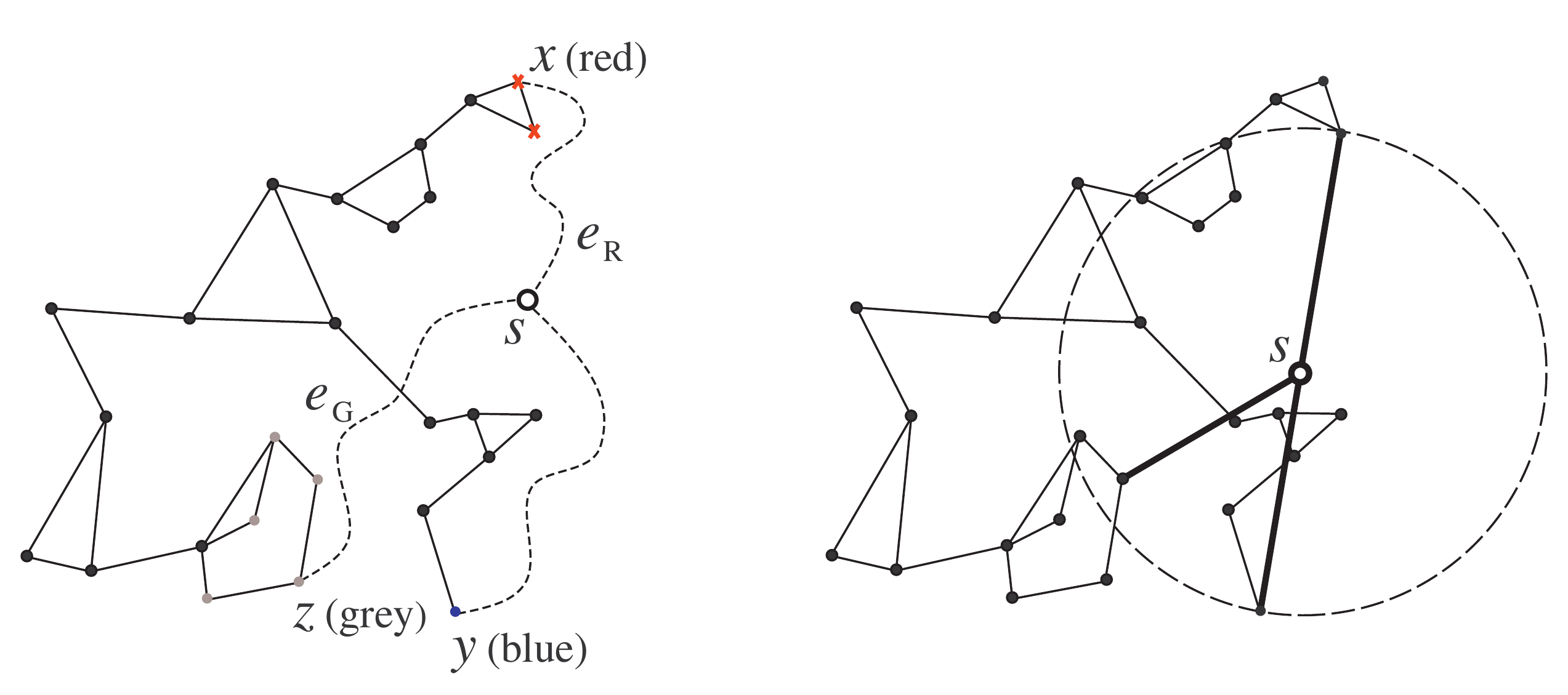}\\
  \end{center}
  \caption{A topology with vertices coloured to produce a single candidate type. A smallest colour-spanning disk locates $s$ optimally for this type}
  \label{figEgs3}
\end{figure}

Figure \ref{figEgs3} shows a candidate topology spanning a single Steiner point $s$ and twenty six terminals. Dashed curves depict Steiner edges incident to a variable Steiner point; and Steiner edges in the resultant graph are represented by bold edges. Terminal $x$ and one of its neighbours are coloured red (these two vertices are represented by crosses); terminal $y$ is coloured blue; and $z$, along with four other specific vertices close to $z$, are coloured grey. Notice firstly that if we replace $z$ as an endpoint of edge $e_\mathrm{G}$ with any grey vertex, and replace $x$ as an endpoint of edge $e_\mathrm{R}$ with any red vertex, the resultant topology is still $2$-connected. In fact, any ($2$-connected) candidate topology ${G}$ on these twenty six terminals, with a single Steiner point, and that contains exactly the same terminal edges as Figure \ref{figEgs3}, must contain the edges $sx',sy$, and $sz'$, where $x'$ is some red vertex and $z'$ is some grey vertex; any other edges incident to $s$ can be deleted from ${G}$ without reducing connectivity. The ten topologies (five grey vertex combinations multiplied by two red vertex combinations) that have these specific properties are said to be of the same \textit{candidate type}. Observe that the candidate type is defined with respect to the graph induced by the terminal edges; this graph is called the \textit{underlying network} and there exists a simple procedure, described later, for constructing it. The sets of coloured terminals are referred to as \textit{valid subsets} of terminals. An optimal location for $s$ with respect to all topologies of this type can now be found by constructing the centre of a smallest disk spanning at least one vertex of each colour. A \textit{smallest colour-spanning disk} is depicted in the second subfigure of Figure \ref{figEgs3}, along with the resultant $2$-connected geometric graph. Constructing smallest colour-spanning disks for more complex examples, such as when there are at least two adjacent Steiner points in the candidate topology, requires the use of \textit{farthest colour Voronoi diagrams}, as recognised by Bae et al. \cite{bae1}.

In Section \ref{sec1} we provide preliminary results and terminology related to connectivity and block-cut-vertex decompositions. Section \ref{types} discusses the construction of candidate types, and Section \ref{final} presents the $2$-Bottleneck algorithm and proves its correctness.

%graph vs network
%node vs vertex
%Steiner vertex

\section{Preliminaries}\label{sec1}
Throughout this paper we only consider finite, simple, and undirected graphs. A graph $G=\langle V(G),E(G)\rangle$ is \textit{connected} if there exists a path
connecting any pair of vertices in $G$. A \textit{component} is a maximal (by inclusion) connected subgraph. The following expressions will have their obvious meanings in this paper: $G-D$, where $D$ is a set of vertices or edges of $G$; $G+E$, where $E$ is a set of edges not in $G$; and $G\pm e$, where $e$ is an edge. A \textit{cut-set} $A$ of
$G$ is any set of vertices such that $G-A$ has strictly more components than $G$; if $|A|=1$ then $A$ is a \textit{cut-vertex}. An \textit{edge-cut} $E$ of
$G$ is any set of edges such that $G-E$ has strictly more components than $G$; if $|E|=1$ then $E$ is a \textit{bridge}.

The \textit{vertex-connectivity} or simply \textit{connectivity} $c=c(G)$ of a graph $G$ is the minimum number of vertices whose removal results
in a disconnected or trivial graph. Therefore $c$ is the minimum cardinality of a cut-set of $G$ if $G$ is connected but not complete; $c=0$ if
$G$ is disconnected; and $c=n-1$ if $G=K_n$, where $K_n$ is the complete graph on $n$ vertices. A graph $G$ is said to be
$c^\prime$-\textit{connected} if $c\geq c^\prime$ for some non-negative integer $c^\prime$. As is standard in the literature, we make an exception for the connectivity definitions of $K_1,K_2$: we assume that $c(K_1)=c(K_2)=2$.

\subsection{Block-cut forests}

%check for hyphens throughout

This paper makes use of a well-known decomposition process by which any given graph is transformed into a forest. Roughly speaking, the vertices of the forest are the cut-vertices and the largest $2$-connected subgraphs of the given graph. As will be appreciated in later sections, this transformed structure reveals important details about the connectivity of the given graph.

A \textit{block} of a graph $G$ is a maximal (by inclusion) $2$-connected subgraph of $G$. For index sets $I,J$, let $\{E_i:i\in I\}$ be a partition of $E(G)$ such that each $E_i$ induces a block $B_i$ of $G$, let $\{B_i:i\in J\}$ be the (possibly empty) set of isolated vertices of $G$, and let $\mathrm{BLOCK}(G)=\{B_i:i\in I\cup J\}$. Note that each
non-cut-vertex of $G$ is contained in (or coincides with) exactly one of the $B_i$; each cut-vertex of $G$ is contained in at least two distinct blocks; and for each
$i,j,i\neq j$, $V(B_i)\cap V(B_j)$ consists of at most one vertex, and this vertex (if it exists) is a cut-vertex of $G$. If $B_i$ contains
exactly one cut-vertex of $G$ then $B_i$ is a \textit{leaf block}. An \textit{isolated block} contains no cut-vertices of $G$, i.e., it is a
$2$-connected component of $G$. We use $\mathrm{LEAF}(G)$ to denote the set of leaf blocks of $G$. The \textit{interior} of block
$B_i$ with respect to $G$, denoted $\mathrm{int}(B_i)$, is the set of all vertices of $B_i$ that are not cut-vertices of $G$.

\begin{figure}[htb]
  \begin{center}
    \includegraphics[scale=0.45]{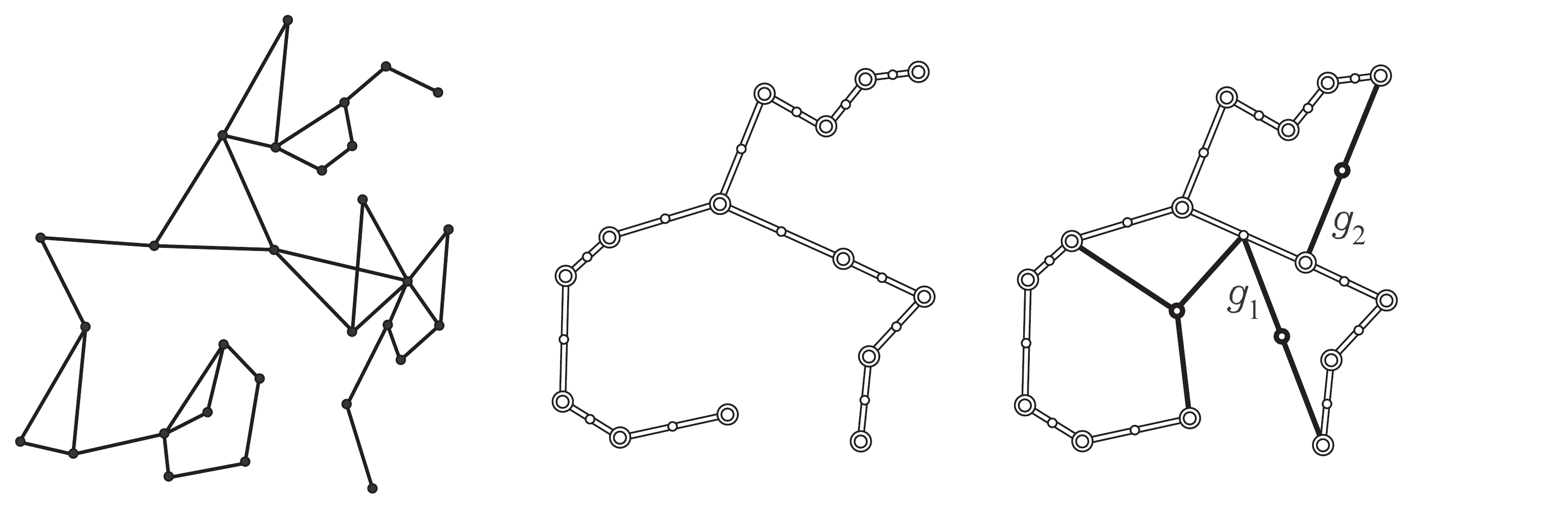}\\
  \end{center}
  \caption{The graphs $G'$, $G_\mathrm{BCF}'$, and $G''$. The $B_i$ are drawn as double-boundary circles}
  \label{figBCF}
\end{figure}

The \textit{block-cut forest} of $G$ is a forest $G_\mathrm{BCF}$ with $V(G_\mathrm{BCF})=\{B_i\in \mathrm{BLOCK}(G)\}\cup
\{z_i:z_i\mathrm{\ is\ a\ cut}\text{-}\mathrm{vertex \ of\ }G\}$ and $E(G_\mathrm{BCF})=\{B_iz_j:z_j\in B_i\}$. In Figure \ref{figBCF} we show three graphs: the first is a graph $G'$ of connectivity $1$; and the second is a depiction of $G_\mathrm{BCF}'$, where large double-boundary circles represent the $B_i$, small circles represent the $z_i$, and edges are drawn as double lines. The purpose of the third figure is to introduce the method by which many subsequent examples will be illustrated in this paper. In this figure we depict a $2$-connected graph $G''$, where $G''$ is the union of a graph $G''_1$ drawn as a block-cut forest and a graph $G_2''$ containing only Steiner edges; as before, Steiner points are represented by small open circles with bold boundaries. When Steiner edges (bold edges) are drawn incident to a double-boundary circle this means that, in $G''$, these edges are incident to vertices in the \textit{interior} of the corresponding block of $G_1''$. Similarly, when a Steiner edge is drawn incident to a small circle this depicts the fact that the Steiner edge is incident to a cut-vertex of $G_1''$.

As the next theorem states, block-cut forests can be constructed in linear time with respect to the number of edges.

\begin{theorem}[see \cite{tarjan}]\label{bcftime}For any $G$ with $m$ edges, $G_\mathrm{BCF}$ can be constructed in time $O(m)$.
\end{theorem}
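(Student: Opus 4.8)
The plan is to prove the bound constructively, by exhibiting a single depth-first traversal of $G$ that simultaneously computes $\mathrm{BLOCK}(G)$, the set of cut-vertices, and the incidences between them, from which $G_\mathrm{BCF}$ is then assembled trivially; this is essentially the classical Hopcroft--Tarjan biconnectivity procedure together with an output-formatting step. First I would run a depth-first search (DFS) from an arbitrary start vertex in each component of $G$, recording for each vertex $v$ its discovery index $d(v)$ and the quantity $\mathrm{low}(v)=\min\bigl(\{d(v)\}\cup\{d(u):uv\text{ is a back edge}\}\cup\{\mathrm{low}(w):w\text{ a DFS-child of }v\}\bigr)$, which is maintained incrementally as the recursion unwinds. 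In the same pass I would maintain a stack onto which every edge is pushed the first time it is traversed.

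The heart of the argument is the standard DFS-tree characterisation of cut-vertices: the root of a DFS tree is a cut-vertex of its component if and only if it has at least two DFS-children, and a non-root vertex $v$ is a cut-vertex if and only if it has a DFS-child $w$ with $\mathrm{low}(w)\geq d(v)$. I would prove both directions using the fundamental property that every non-tree edge of a DFS joins an ancestor to a descendant; hence, when $\mathrm{low}(w)\geq d(v)$, deleting $v$ disconnects the subtree rooted at $w$ from the rest of $G$, and conversely a cut-vertex must create such a trapped subtree. Granting this, whenever the search backtracks from $w$ to $v$ and finds $\mathrm{low}(w)\geq d(v)$, the edges sitting on the stack down to and including $vw$ are exactly the edges of one block $B_i$; I pop them off, emit $B_i$, and record that $v$ is a cut-vertex of $B_i$ (so that the incidence pair $(v,B_i)$ is available for the forest). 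Isolated vertices of $G$, which are their own blocks, are detected in an initial scan of $V(G)$.

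Finally, $G_\mathrm{BCF}$ is built directly from this output: introduce one forest-vertex per member of $\mathrm{BLOCK}(G)$ and one per cut-vertex, and add the edge $B_iz_j$ for each recorded incidence $z_j\in B_i$. Since each edge of $G$ is pushed onto and popped off the stack exactly once, each vertex is touched a constant number of times, and each of the $O(m)$ forest edges is created once, the whole procedure runs in $O(m)$ time (absorbing the $O(n)$ cost of scanning for isolated vertices, which is dominated once trivial components are set aside, or under the assumption that $G$ has none). The only step that requires genuine proof rather than bookkeeping is the claim that the stack segments extracted at the successive articulation events partition $E(G)$ into maximal $2$-connected subgraphs — equivalently, that each such segment induces a $2$-connected graph (or a single edge, regarded as $K_2$) and that none extends to a larger one — and I expect establishing this, in tandem with the $\mathrm{low}$-point test above, to be the main obstacle; both follow from the ancestor--descendant structure of non-tree DFS edges, argued by induction on the order in which blocks are emitted.
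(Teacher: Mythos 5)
Your proof is a correct reconstruction of the Hopcroft--Tarjan depth-first-search biconnectivity algorithm (low-points, edge stack, cut-vertex test $\mathrm{low}(w)\geq d(v)$, stack-segment extraction of blocks), which is precisely the result the paper invokes by citing Tarjan \cite{tarjan} without giving an independent proof. The only small caveat, which you flag yourself, is that the running time is really $O(n+m)$, so the stated $O(m)$ bound implicitly assumes either that isolated vertices are handled separately or that $n=O(m)$.
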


\section{Underlying networks and candidate types}\label{types}
We begin by describing a process whereby an initial set of terminal edges is added to $X$. We call the graph that results an \textit{underlying network}. As we show, employing underlying networks allows us to reduce the set of all candidate topologies to polynomial size. After dealing with underlying networks we give a detailed description of the division of the set of candidate topologies into \textit{types}, as discussed in Section \ref{over}. We also show how, with respect to any given candidate type, the Steiner points are optimally located so as to achieve minimum bottleneck length.

\subsection{Reducing the number of candidate topologies: underlying networks}\label{under}

Let $S$ be a set of at most $k$ variable Steiner points in the plane, and let $G_\mathrm{UN}=\langle X\cup S,E(G_\mathrm{UN})\rangle$ be any graph such that $E(G_\mathrm{UN})$ contains no Steiner edges. Then $G_\mathrm{UN}$ is referred to as an \textit{underlying network}. As an example we refer back to Figure \ref{figEgs3} where an underlying network results when we remove the dashed edges from the left sub-figure. A \textit{globally (locally) optimal} underlying network is defined as the graph that results by removing the Steiner edges from some globally (locally) optimal network. It is important to note that underlying networks, according to our definition, include a set of isolated Steiner points.

Recall that $n=\vert X\vert$. Since we are only interested in the bottleneck edge when calculating the ``cost" of a network, there are essentially only $O(n^2)$ underlying networks: let $L=\{\ell_1,...,\ell_{n'}\}$ be a non-decreasing set of all distances occurring between pairs of vertices of $X$, where $n'\leq {n\choose 2}$. Let $[K]_i$ be the subgraph of the complete graph on $X$ induced by all edges of length at most $\ell_i$. In the $2$-Bottleneck algorithm, underlying networks are selected from the set $\{[K]_i:i\leq n'\}$. Sets of Steiner edges are then added to the underlying networks to create candidate topologies.

Similarly to a result of Bae et al. \cite{bae1} we have the following result.

\begin{proposition}[\cite{bae1}]\label{specN}There exists a globally optimal network $N^*$ in $L_p$ such the degree of every Steiner point is at most $\Delta=5$ when $1<p<\infty$, and is at most $\Delta=7$ when $p\in\{1,\infty\}$.
\end{proposition}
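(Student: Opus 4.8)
The plan is to begin with a globally optimal network and repeatedly ``absorb'' an edge at any over-saturated Steiner point by re-routing it to a nearby neighbour, showing that this can always be done without lengthening the bottleneck, without adding a Steiner point, and without destroying $2$-connectivity. The geometric ingredient, taken from \cite{bae1}, is a packing fact: in the $L_p$ plane there is a constant $\Delta$, equal to $5$ for $1<p<\infty$ and to $7$ for $p\in\{1,\infty\}$, such that for every point $s$ and every family of more than $\Delta$ points about it, two of them that are consecutive in the cyclic order of directions at $s$, say $v_i$ and $v_j$, satisfy $\|v_i-v_j\|_p\le\max(\|s-v_i\|_p,\|s-v_j\|_p)$. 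For $1<p<\infty$ this rests on strict convexity of the unit circle: a short-chord property holds for pairs of directions within a threshold angle, and more than $\Delta$ directions must contain two within that angle; for the polygonal unit balls of $L_1$ and $L_\infty$ the threshold angle is smaller, which forces the larger constant $7$. I would state this as a lemma and cite \cite{bae1} for its proof.

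Next I would take $N^*$ to be a globally optimal network of least total edge length (such a network exists, since the optimal topology ranges over a finite set and, in an optimal network, every Steiner point lies within a bounded distance of $X$, so the configuration space is compact). Suppose for contradiction that some Steiner point $s$ has $\deg_{N^*}(s)>\Delta$, and let $v_i,v_j$ be the consecutive pair supplied by the lemma, labelled so that $\|s-v_j\|_p\le\|s-v_i\|_p$; then $\|v_i-v_j\|_p\le\|s-v_i\|_p\le\ell_{\mathrm{max}}(N^*)$. Form $N'$ by deleting $sv_i$ and adding the edge $v_iv_j$ if it is not already present (or, in the degenerate situation where $v_i$ is a Steiner point of degree $2$ adjacent to $s$ and $v_j$, by suppressing $v_i$). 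In every case $N'$ spans $X$, has at most $k$ Steiner points, and $\ell_{\mathrm{max}}(N')\le\ell_{\mathrm{max}}(N^*)$, because the only edge $N'$ can contain that $N^*$ lacks is $v_iv_j$; and its total length is no larger than that of $N^*$, strictly smaller unless $\|v_i-v_j\|_p=\|s-v_i\|_p$ (an equality I would eliminate either by a perturbation to general position or by a secondary tie-break among optimal networks of least length). Hence, once $N'$ is known to be $2$-connected, it is a globally optimal network of smaller length, contradicting the choice of $N^*$.

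Everything therefore reduces to showing that $N'$ is $2$-connected, and this is the step I expect to be the main obstacle --- it is precisely where the proof must go beyond the tree-based reasoning of \cite{bae1}, and I would run it through the block-cut-vertex language of Section~\ref{sec1}. Since $N^*$ is $2$-connected it is $2$-edge-connected, so $N^*-sv_i$ is connected; and for any vertex $w\notin\{s,v_i,v_j\}$, the graph $N'-w$ is still connected, because $N^*-w$ is connected and in $N'-w$ the (now possibly missing) edge $sv_i$ is bypassed by the path $s-v_j-v_i$, while the cases $w\in\{s,v_i\}$ are immediate since deleting either vertex already removes $sv_i$. Hence the only remaining obstruction is that $v_j$ becomes a cut-vertex, that is, that every cycle of $N^*$ through $sv_i$ passes through $v_j$; then the component $A$ of $(N^*-sv_i)-v_j$ containing $v_i$ attaches to the rest of $N^*$ only at $v_j$, so every $N^*$-neighbour of $v_i$ other than $s$ lies in $A\cup\{v_j\}$. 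I would dispose of this ``trap'' by a short case analysis, which is the part I anticipate requiring the most care: since $N^*$ is $2$-connected and $\deg_{N^*}(s)>\Delta\ge 3$, the vertex $v_j$ must have a neighbour $r$ outside $A\cup\{s\}$ (otherwise $s$ alone would separate $A\cup\{v_j\}$ from the other neighbours of $s$, contradicting $2$-connectivity), so one instead re-routes $sv_j$ --- deleting $sv_j$ and adding $v_iv_j$ --- and argues, via the symmetric analysis together with the cycle through $sv_j$ supplied by $v_jr$, that this does preserve $2$-connectivity; the sub-cases $A=\{v_i\}$ and ``$v_i$ a terminal'' are handled by, respectively, suppressing $v_i$ when it is a Steiner point and by re-selecting the consecutive close pair about $s$. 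Once this analysis is in place, the case $p\in\{1,\infty\}$ follows verbatim with the constant $5$ replaced by $7$, the only difference being the value of the threshold angle in the geometric lemma.
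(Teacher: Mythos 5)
Note first that the paper gives no proof of this proposition: it labels the statement ``[\cite{bae1}]'' and remarks only that it holds ``similarly'' to Bae et al.'s degree bound for bottleneck Steiner \emph{trees}, so there is no paper-internal proof to compare against. Your reconstruction of the underlying scheme --- the $L_p$ angular packing lemma producing a close consecutive pair $v_i,v_j$ at an over-saturated Steiner point, a minimum-total-length tie-break among bottleneck-optimal networks, and the reroute $sv_i\mapsto v_iv_j$ --- is exactly the right engine, and you correctly isolate the genuinely new content for the $2$-connected setting, namely that the reroute must preserve $2$-connectivity rather than mere connectivity.

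There is, however, a gap in your ``trap'' case. When $v_j$ would become a cut-vertex of $N'=N^*-sv_i+v_iv_j$, you escape by rerouting the \emph{other} spoke, forming $N''=N^*-sv_j+v_iv_j$. But the packing lemma gives only $\|v_i-v_j\|_p\le\|s-v_i\|_p$, the \emph{longer} spoke; nothing compares $\|v_i-v_j\|_p$ with the shorter spoke $\|s-v_j\|_p$, so $N''$ can have strictly larger total length than $N^*$. Your contradiction with the choice of $N^*$ as a minimum-length optimum then evaporates, and the argument has no decreasing potential in this branch. As written, the proposal does not close the trap case.

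In fact the trap case is vacuous for a minimum-total-length optimum, but for a reason you do not supply, and it is worth stating so the argument actually closes. Suppose the trap occurs; let $A$ be the component of $N^*-sv_i-v_j$ containing $v_i$ and $B$ the other one (there are exactly two, since $N^*-v_j$ is connected and removing the single edge $sv_i$ splits it into at most two pieces). Then $s\in B$ and, since any $s$-neighbour $u\notin\{v_i,v_j\}$ has $su$ surviving in $N^*-sv_i-v_j$, all of $s$'s $\ge 4$ remaining neighbours lie in $B$; and, as you observe, $v_j$ has a neighbour $r\in B\setminus\{s\}$. This produces two internally disjoint $s$--$v_j$ paths avoiding the edge $sv_j$: one through $A$ via $sv_i$ and a neighbour of $v_j$ in $A$, and one through $B$ via $r$. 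By Dirac's criterion quoted in Section~\ref{jusdoit} (an edge is critical iff it is not a chord path \cite{dirac}), $sv_j$ is non-critical, so $N^*-sv_j$ is $2$-connected, spans the same vertex set, has the same bottleneck, and has strictly smaller total length --- contradicting the minimality of $N^*$. Hence the trap never arises, $N'$ is always $2$-connected, and the primary reroute already completes the proof without the $N''$ branch.
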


The previous proposition leads to a further reduction in the number of underlying networks our algorithm needs to consider. For any graph $G$ we use $E_{\text{SE}}(G)$ to denote the set of Steiner edges of $G$.

\noindent\textbf{Definition.} \textit{If $N:=G_\mathrm{UN}+E_{\mathrm{SE}}(N)$ is $2$-connected then $N$ is referred to as an \textbf{augmented network} containing $G_\mathrm{UN}$.}

\begin{lemma}[\cite{brazil2}]\label{lemLeaf2}Let $N$ be an augmented network containing $G_\mathrm{UN}$. For every leaf-block $B$ of $G_\mathrm{UN}$ there exists at least one Steiner edge in $N$ incident to a vertex in the interior of $B$. For every isolated block $W$ of $G_\mathrm{UN}$ there exists at least two distinct Steiner edges in $N$ incident to vertices in $W$.
\end{lemma}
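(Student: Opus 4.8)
The plan is to argue by contradiction, exploiting the fact that $N$ is $2$-connected while $G_\mathrm{UN}$ is not. Recall that the blocks of $G_\mathrm{UN}$ together with its cut-vertices form a block-cut forest $(G_\mathrm{UN})_\mathrm{BCF}$. Let $B$ be a leaf-block of $G_\mathrm{UN}$, so $B$ contains exactly one cut-vertex $z$ of $G_\mathrm{UN}$, and let $I = \mathrm{int}(B) = V(B)\setminus\{z\}$ be its interior. Since $B$ is $2$-connected (or is a $K_1$/$K_2$, which by our connectivity convention we treat as $2$-connected), $I$ is nonempty. Suppose, for contradiction, that no Steiner edge of $N$ is incident to a vertex of $I$. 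Then every edge of $N$ incident to a vertex in $I$ is a terminal edge, and since all terminal edges of $N$ lie in $G_\mathrm{UN}$ (because $N = G_\mathrm{UN} + E_\mathrm{SE}(N)$), every such edge lies in $B$. Consequently, in $N$, every path leaving $I$ must pass through $z$: deleting $z$ from $N$ separates $I$ from the rest of $X\cup S$. But $X\cup S$ contains at least $|X| = n \ge 2$ vertices outside $I$ (or more carefully, $G_\mathrm{UN}$ has another block besides $B$, since $z$ is a cut-vertex and therefore lies in $\ge 2$ blocks, so there is a vertex not in $V(B)$ at all), contradicting $2$-connectivity of $N$. Hence some Steiner edge of $N$ is incident to a vertex of $I$.

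For the isolated-block case, let $W$ be an isolated block of $G_\mathrm{UN}$, so $W$ contains no cut-vertex of $G_\mathrm{UN}$ and is therefore a $2$-connected component of $G_\mathrm{UN}$; in particular $V(W)$ is a union of components of $G_\mathrm{UN}$ meeting no other block. Since $N$ is $2$-connected and $n\ge 2$, $N$ has at least two vertices, and if $V(W) \ne V(N)$ then there must be at least one Steiner edge joining $V(W)$ to $(X\cup S)\setminus V(W)$ — indeed, at least two such edges, for otherwise either $N$ is disconnected or a single vertex (one endpoint of that lone crossing edge) is a cut-vertex of $N$, contradicting $2$-connectivity. It remains to rule out $V(W) = V(N)$: this would force $N = W \cup E_\mathrm{SE}(N)$ with $W$ already spanning $X\cup S$, but $W$ is an isolated block of $G_\mathrm{UN}$ and hence contains no Steiner points in its vertex set (Steiner points of $G_\mathrm{UN}$ are isolated or lie in blocks reached only via Steiner edges in $N$)… more cleanly: since we may assume $k\ge 3 \ge 1$ and at least one Steiner point is actually used in a nontrivial way, or simply since the lemma is only invoked when $G_\mathrm{UN}$ genuinely has $\ge 2$ blocks, the degenerate case does not arise. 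The two crossing Steiner edges are then incident to vertices in $W$, as claimed.

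The main obstacle I anticipate is handling the degenerate cases correctly and uniformly — specifically, the $K_1$ and $K_2$ blocks (isolated Steiner points and single-edge blocks), for which the usual "$2$-connected implies no cut-vertex" reasoning must be replaced by the paper's stated convention $c(K_1) = c(K_2) = 2$, and the edge case where an isolated block could a priori coincide with all of $N$. The cleanest route is probably to phrase everything in terms of the block-cut forest: a leaf block $B$ corresponds to a leaf of $(G_\mathrm{UN})_\mathrm{BCF}$, its interior is separated from the rest of $G_\mathrm{UN}$ by the cut-vertex $z$, and any Steiner edges of $N$ incident to $I$ are exactly what repairs the $2$-connectivity across $z$; an isolated block corresponds to an isolated vertex of the forest, and the Steiner edges of $N$ are what connect it to (and $2$-connect it with) the remainder. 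One should also note this lemma is quoted from \cite{brazil2}, so a brief sketch along these lines, with the convention on $K_1,K_2$ flagged explicitly, suffices.
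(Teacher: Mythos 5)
The paper states this lemma as cited from \cite{brazil2} and does not reprove it, so there is no in-paper proof to compare against; the review is of the proposal on its own terms.

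Your core argument for both parts is the right one and is essentially correct. For a leaf-block $B$ with unique cut-vertex $z$, any vertex of $\mathrm{int}(B)$ lies in no block other than $B$, so every terminal edge incident to $\mathrm{int}(B)$ stays inside $B$; if no Steiner edge touched $\mathrm{int}(B)$, then $z$ would separate $\mathrm{int}(B)$ from the (nonempty, since $z$ lies in a second block) remainder of $N$, and as $N$ has at least three vertices this contradicts $2$-connectivity. For an isolated block $W$, you correctly observe that $W$ is a whole connected component of $G_\mathrm{UN}$, that no Steiner edge can have both endpoints in $V(W)$, and that $0$ or $1$ crossing Steiner edges yields disconnection or a cut-vertex respectively. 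One small imprecision: when there is exactly one crossing edge $uv$ with $u\in V(W)$, the cut-vertex is $u$ if $\lvert V(W)\rvert\ge 2$ but $v$ if $V(W)=\{u\}$; your phrase ``one endpoint of that lone crossing edge'' does cover both, though it reads as if a single choice works uniformly.

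The genuinely weak spot is your handling of $V(W)=V(N)$. The parenthetical ``$W$ contains no Steiner points in its vertex set (Steiner points of $G_\mathrm{UN}$ are isolated or lie in blocks reached only via Steiner edges in $N$)'' is confused: every Steiner point is an isolated vertex of $G_\mathrm{UN}$ by definition, and a single Steiner point does form its own $K_1$ isolated block. The clean observation you want is this: if $V(W)=V(N)$ then $W=G_\mathrm{UN}$ is $2$-connected, but any Steiner point in $S$ would be an isolated vertex of $G_\mathrm{UN}$ and hence a separate $K_1$ block outside $W$, forcing $S=\emptyset$; then $N=G_\mathrm{UN}$ has no Steiner edges at all, and the conclusion of the lemma is literally false. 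So this is not a case you can ``rule out'' from the stated hypotheses --- it is a genuine degenerate exception to the lemma as written, implicitly excluded (and harmless for the paper's use of the lemma in Corollary~\ref{leafLim}, where it only contributes $b(G_\mathrm{UN}^*)=2\le\Delta k$). Your appeal to ``$k\ge 3$ and at least one Steiner point is actually used'' does not by itself exclude $S=\emptyset$, since the problem allows \emph{at most} $k$ Steiner points. Flagging the exception explicitly, rather than waving it away, would make the argument airtight.
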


For any $G$, let $b(G)$ be the number of leaf blocks plus twice the number of isolated blocks occurring in $G$ (recall that isolated vertices and edges are blocks according to definition).

\begin{corollary}\label{leafLim}There exists a globally optimal underlying network $G_\mathrm{UN}^*$ such that $b(G_\mathrm{UN}^*)\leq \Delta k$.
\end{corollary}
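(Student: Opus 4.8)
The goal is to bound $b(G_\mathrm{UN}^*)$ by $\Delta k$. We have a globally optimal network $N^*$, and by Proposition \ref{specN} we may assume every Steiner point has degree at most $\Delta$. The underlying network $G_\mathrm{UN}^*$ is obtained by deleting the Steiner edges from $N^*$. The plan is to count Steiner-edge endpoints in two ways: from the Steiner-point side (there are at most $k$ Steiner points, each of degree at most $\Delta$, so at most $\Delta k$ Steiner-edge endpoints total — actually at most $\Delta k$ Steiner edges, hence fewer Steiner-edge endpoints landing in $X$), and from the block side via Lemma \ref{lemLeaf2}.

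The key steps, in order: First, fix a globally optimal network $N^*$ as in Proposition \ref{specN} and set $G_\mathrm{UN}^* := N^* - E_{\mathrm{SE}}(N^*)$; this is a globally optimal underlying network by definition. Second, observe that $N^*$ is an augmented network containing $G_\mathrm{UN}^*$, so Lemma \ref{lemLeaf2} applies: each leaf block of $G_\mathrm{UN}^*$ receives at least one Steiner edge incident to its interior, and each isolated block receives at least two Steiner edges incident to it. Third, note that these Steiner edges are distinct across distinct blocks: a Steiner edge $su$ with $u \in X$ has its terminal endpoint $u$ lying in the interior of at most one block (a non-cut-vertex lies in exactly one block), and for the isolated-block case the two guaranteed Steiner edges are internal to that block; crucially, interiors of distinct blocks are disjoint and an isolated block shares no vertex with any other block. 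Hence summing the lower bounds of Lemma \ref{lemLeaf2} over all leaf blocks and isolated blocks gives at least $b(G_\mathrm{UN}^*)$ distinct Steiner edges in $N^*$. Fourth, bound the total number of Steiner edges in $N^*$: each Steiner edge is incident to at least one Steiner point, there are at most $k$ Steiner points, and each has degree at most $\Delta$, so the number of Steiner edges is at most $\Delta k$. Combining, $b(G_\mathrm{UN}^*) \leq \Delta k$.

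I expect the main subtlety to be the disjointness argument in the third step — verifying that the Steiner edges guaranteed by Lemma \ref{lemLeaf2} for different leaf/isolated blocks are genuinely distinct, so that the lower bounds add up rather than potentially double-counting a single Steiner edge. This rests on the structural facts recalled just before Figure \ref{figBCF}: each non-cut-vertex lies in exactly one block, distinct blocks meet in at most a cut-vertex, and an isolated block has no cut-vertices at all. A terminal that is the endpoint of a Steiner edge and lies in $\mathrm{int}(B)$ for a leaf block $B$ cannot simultaneously lie in the interior of another block, nor in an isolated block; and the two Steiner edges internal to an isolated block $W$ touch only vertices of $W$, which are shared with no other block. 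So no Steiner edge is charged to two different blocks, and the count is valid. The rest is a routine double-counting estimate.
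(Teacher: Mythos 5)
Your proof follows the paper's approach exactly: invoke Proposition \ref{specN} to fix a degree-$\Delta$-bounded optimal $N^*$, let $G_\mathrm{UN}^*:=N^*-E_{\mathrm{SE}}(N^*)$, apply Lemma \ref{lemLeaf2} block by block, and bound the total number of Steiner edges by $\Delta k$. The paper's own proof is a one-liner (``the corollary follows from Lemma \ref{lemLeaf2}'') and leaves the double-counting entirely implicit, so your filling in the disjointness argument is a faithful expansion rather than a different route.

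One remark on the step you flagged as the main subtlety. Your disjointness claim is clean for blocks containing terminals: a Steiner edge has at most one terminal endpoint, interiors of distinct blocks are disjoint, and a terminal isolated block shares no vertex with any other block. But your phrase ``the two Steiner edges internal to an isolated block $W$ touch only vertices of $W$'' is not literally true, and this is exactly where a subtlety hides that the paper also glosses over: by the paper's own definitions an underlying network retains the Steiner points as isolated vertices, so each $\{s\}$ with $s\in S$ is itself an isolated block of $G_\mathrm{UN}^*$. A Steiner edge $su$ with $u\in X$ is then incident both to $u$'s terminal block and to the singleton block $\{s\}$, so it \emph{is} charged twice under the naive sum, and the claim ``no Steiner edge is charged to two different blocks'' fails for those Steiner-point blocks. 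Carried out literally, the count yields $b(G_\mathrm{UN}^*)\leq \Delta k + 2k$ rather than $\Delta k$ (a configuration with four mutually distant $2$-connected terminal clusters and $k=2$ Steiner points each adjacent to one terminal per cluster already gives $b=12>10=\Delta k$ in the Euclidean case). In practice this is harmless --- any $O(k)$ bound serves the paper's complexity argument, and later sections (e.g.\ the definition of $\mathcal V$) tacitly treat $b$ as counting terminal blocks only --- but it is worth noting that your ``no double-charging'' statement, and the stated constant $\Delta k$, rely on reading $b$ as excluding the Steiner-point singletons.
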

\begin{proof}
Let $N^*$ be a globally optimal network satisfying Proposition \ref{specN} and let $G_\mathrm{UN}^*$ be the graph that results by removing all Steiner edges from $N^*$ and keeping all vertices. Then $G_\mathrm{UN}^*$ is an underlying network, $N^*$ is an augmented network containing $G_\mathrm{UN}^*$, and the corollary follows from Lemma \ref{lemLeaf2}.
\end{proof}

A naive algorithm for constructing optimal bottleneck $2$-connected $k$-Steiner networks would therefore need to consider roughly $O(n^{\Delta k+2})$ topologies: $O(n^2)$ underlying networks multiplied by $O(n^{\Delta k})$, since there are at most $\binom{n+k}{\Delta}$ neighbour-set choices for each Steiner point. Our $2$-Bottleneck algorithm will reduce this complexity even further by means of candidate types and farthest-colour Voronoi diagrams.

\subsection{Candidate types and Steiner endpoint sequences}\label{augG}
Let $\mathcal{Q}=\langle (s_1,Y_1),...,(s_\rho,Y_\rho) \rangle$, where $\rho\leq \Delta k$, be an arbitrary sequence such that $s_i\in S$ and either $Y_i$ is the singleton $\{s_i'\}$, where $s_i'\in S$, or $Y_i\subseteq X$. Intuitively, the pair $(s_i,Y_i)$ contains the potential endpoints of a Steiner edge in a candidate topology. For instance, in the example from Figure \ref{figEgs3} at the end of Section \ref{over}, the following sequence satisfies this form: $\mathcal{Q}_0=\langle(\{s\},\{x,x_1\}),(\{s\},\{y\}), (\{s\},Z)\rangle$, where $s$ is the Steiner point, $x,x_1$ are the two red vertices, $y$ is the blue vertex, and $Z$ is the set of all grey vertices. Note therefore that for any $i,j$, the Steiner points $s_i$ and $s_j$ (or $s_i'$ and $s_j'$) are not necessarily distinct.

A sequence $\mathcal{Q}$ and an underlying network together define a candidate type as follows. For arbitrary $\mathcal{Q}$, let $G(\mathcal{Q}):=G_\mathrm{UN}+E_{\mathrm{SE}}(G(\mathcal{Q}))$, where $E_{\mathrm{SE}}(G(\mathcal{Q}))=\{h_1,...,h_\rho\}$ is a set of \textit{labelled} Steiner edges with variable endpoints such that, for every $1\leq j\leq \rho$, one endpoint of $h_j$ is $s_j$ and the other is in $Y_j$. We call $G(\mathcal{Q})$ a \textit{candidate type}. The sequence $\mathcal{Q}$ is called a \textit{Steiner endpoint sequence} for $G(\mathcal{Q})$. Let $G_{\text{OPT}}(\mathcal{Q})$ be a cheapest network derived from $G(\mathcal{Q})$ by optimally locating the Steiner edges (as restricted by $\mathcal{Q}$) and Steiner points in the plane. Note that for arbitrary $\mathcal{Q}$, the graph $G_{\mathrm{OPT}}(\mathcal{Q})$ is not necessarily $2$-connected. In the example of Figure \ref{figEgs3} the graph $G_{\mathrm{OPT}}(\mathcal{Q}_0)$ is illustrated in the right sub-figure. We provide another example of the above concepts in Figure \ref{figEgs22}.

\begin{figure}[htb]
  \begin{center}
    \subfigure[$G_\mathrm{UN}$]{\label{figEgs22a}\includegraphics[scale=0.45]{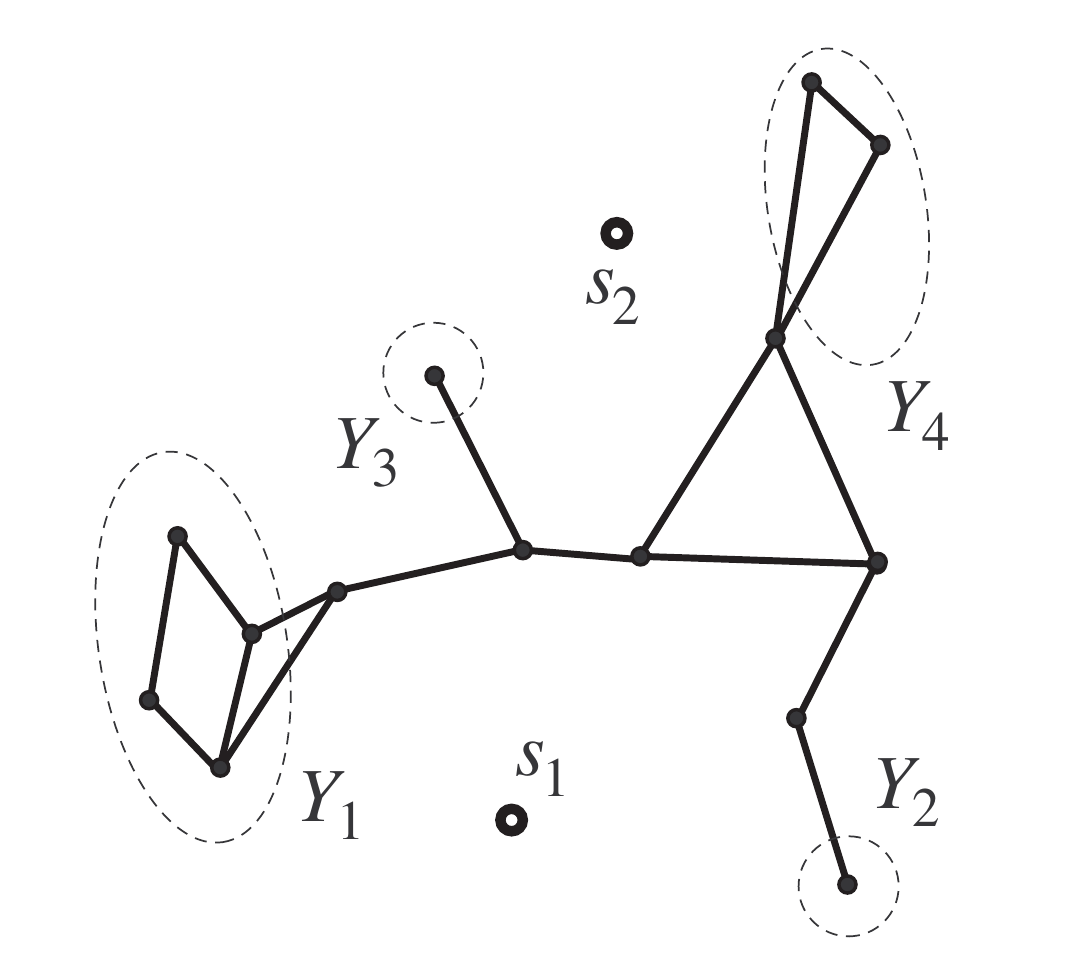}}
    \subfigure[$G(\mathcal{Q}_1)$]{\label{figEgs22b}\includegraphics[scale=0.45]{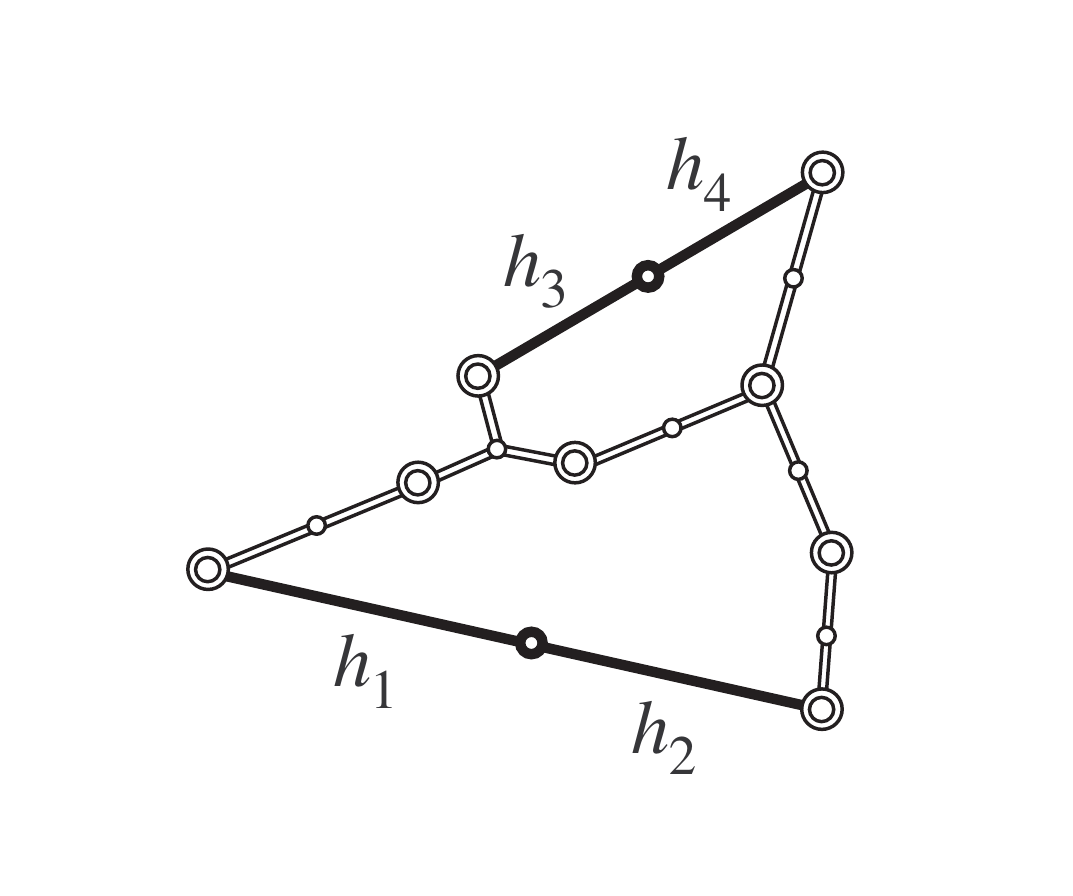}}
    \subfigure[$G_{\mathrm{OPT}}(\mathcal{Q}_1)$]{\label{figEgs22c}\includegraphics[scale=0.45]{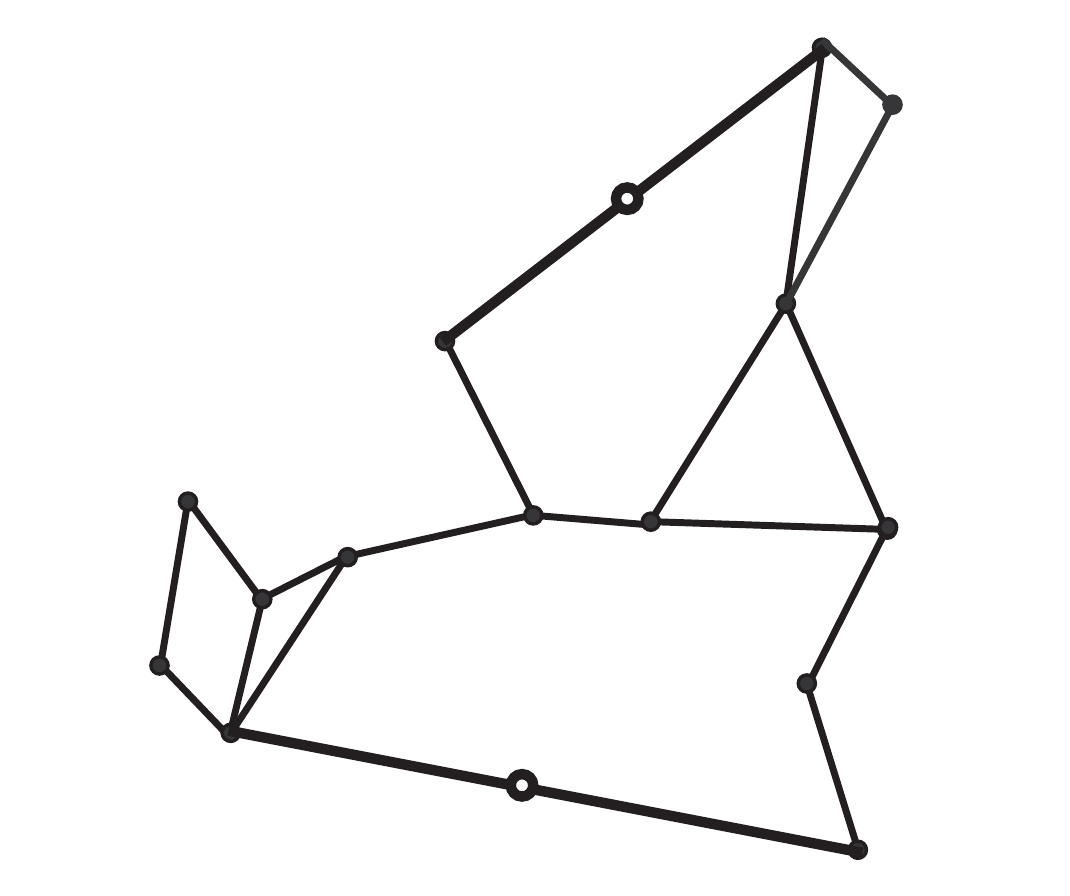}}
  \end{center}
  \caption{An example of an underlying network $G_\mathrm{UN}$, a candidate type $G(\mathcal{Q}_1)$ containing the underlying network, and the resultant graph $G_{\mathrm{OPT}}(\mathcal{Q}_1)$ when $\rho=4$ and $\mathcal{Q}_1=\langle(s_1,Y_1),(s_1,Y_2),(s_2,Y_3),(s_2,Y_4)\rangle$. The candidate type $G(\mathcal{Q}_1)$ is illustrated by drawing $G_\mathrm{UN}-S$ as a block-cut forest}
  \label{figEgs22}
\end{figure}

Let $\mathcal{Q}=\langle (s_1,Y_1),...,(s_\rho,Y_\rho) \rangle$ and $\mathcal{Q}'=\langle (s_1,Y_1'),...,(s_\rho,Y_\rho') \rangle$ be two Steiner endpoint sequences. Recall that $\ell_\text{max}(G_{\text{OPT}}(\mathcal{Q}))$ is the length of a longest edge of $G_{\text{OPT}}(\mathcal{Q})$. The definition of $G_{\mathrm{OPT}}(\mathcal{Q})$ implies the following property.

\begin{proposition}[Monotonicity property]If $Y_i\subseteq Y_i'$ for all $i$ then\newline $\ell_\mathrm{max}(G_{\mathrm{OPT}}(\mathcal{Q}'))\leq\ell_\mathrm{max}(G_{\mathrm{OPT}}(\mathcal{Q}))$.
\end{proposition}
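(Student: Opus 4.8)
The plan is to unpack the definition of $G_{\mathrm{OPT}}(\mathcal{Q})$ as a minimiser over a feasible set, and then observe that enlarging the $Y_i$'s enlarges this feasible set, so the minimum can only decrease. Concretely, a candidate type $G(\mathcal{Q})$ comes with $\rho$ labelled Steiner edges $h_1,\dots,h_\rho$, where $h_j$ must join $s_j$ to some vertex in $Y_j$; a "placement" consists of choosing coordinates in $\mathbb{R}^2$ for every Steiner point together with, for each $j$, a choice of the $Y_j$-endpoint of $h_j$. Each placement yields a geometric graph on $X$, and $G_{\mathrm{OPT}}(\mathcal{Q})$ is by definition a placement attaining $\min \ell_{\mathrm{max}}$ over all placements. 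Write $\mathcal{P}(\mathcal{Q})$ for the set of placements associated to $\mathcal{Q}$, so that $\ell_{\mathrm{max}}(G_{\mathrm{OPT}}(\mathcal{Q})) = \min_{P\in\mathcal{P}(\mathcal{Q})} \ell_{\mathrm{max}}(P)$.

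The key step is the inclusion $\mathcal{P}(\mathcal{Q}) \subseteq \mathcal{P}(\mathcal{Q}')$ whenever $Y_i \subseteq Y_i'$ for all $i$. This holds because $\mathcal{Q}$ and $\mathcal{Q}'$ have the same length $\rho$ and the same Steiner-point first coordinates $s_1,\dots,s_\rho$, so the underlying network $G_{\mathrm{UN}}$ and the labelling structure of the $\rho$ Steiner edges are identical for both types; the only difference is that in $\mathcal{Q}'$ each edge $h_j$ is permitted a larger set of possible endpoints. Hence any placement $P$ feasible for $\mathcal{Q}$ — which picks, for each $j$, an endpoint of $h_j$ in $Y_j$ — is also a placement feasible for $\mathcal{Q}'$, since that endpoint lies in $Y_j \subseteq Y_j'$ and the Steiner-point coordinates are unconstrained in both cases. (One should note the degenerate alternative in the definition where $Y_i = \{s_i'\}$ is a singleton Steiner point; if $Y_i\subseteq Y_i'$ with $Y_i$ of this form then $Y_i' = Y_i$, so no extra case analysis is needed there.)

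From the inclusion the conclusion is immediate: taking a minimum of $\ell_{\mathrm{max}}$ over a larger set cannot increase it, so
\[
\ell_{\mathrm{max}}(G_{\mathrm{OPT}}(\mathcal{Q}')) \;=\; \min_{P\in\mathcal{P}(\mathcal{Q}')} \ell_{\mathrm{max}}(P) \;\leq\; \min_{P\in\mathcal{P}(\mathcal{Q})} \ell_{\mathrm{max}}(P) \;=\; \ell_{\mathrm{max}}(G_{\mathrm{OPT}}(\mathcal{Q})).
\]
I do not expect any genuine obstacle here; the statement is essentially a definitional monotonicity fact. The only point requiring a little care is making the notion of "placement" and "optimally locating the Steiner edges and Steiner points" precise enough that the feasible-set inclusion is transparent — in particular being explicit that both sequences share the same $\rho$ and the same list of Steiner-point labels, so that "enlarging $Y_i$" really does just relax a constraint rather than change the combinatorial structure of the type.
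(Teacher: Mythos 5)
Your proof is correct and matches the paper's implicit reasoning: the paper offers no explicit proof, merely stating that the Monotonicity property "follows from the definition of $G_{\mathrm{OPT}}(\mathcal{Q})$," and the definitional content is exactly the feasible-set-inclusion argument you spell out. Your explicit handling of the singleton-Steiner-point case of $Y_i$ is a correct and worthwhile detail that the paper leaves unstated.
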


Candidate types, as formally defined above, are useful in the following way. Let $N^*$ be a globally optimal network with Steiner edges $s_1y_1,...,s_\rho y_\rho$, where the $s_i$ are not necessarily distinct. Let $\mathcal{Q}^*=\langle (s_1,Y_1^*),...,(s_\rho,Y_\rho^*) \rangle$ be any Steiner endpoint sequence such that $y_i\in Y_i^*$ for every $i$, and such that $G_{\text{OPT}}(\mathcal{Q}^*)$ is $2$-connected (where $G_{\text{OPT}}(\mathcal{Q}^*)$ contains the same underlying network as $N^*$). There exists at least one such sequence, since $N^*=G_{\text{OPT}}(\mathcal{Q}^*_0)$ where $\mathcal{Q}^*_0=\langle (s_1,\{y_1\}),...,(s_\rho,\{y_\rho\}) \rangle$. Since $N^*$ is optimal, we have $\ell_\mathrm{max}(N^*)\leq \ell_\mathrm{max}(G_{\text{OPT}}(\mathcal{Q}^*))$. By the Monotonicity property, we also have $\ell_\mathrm{max}(N^*)\geq \ell_\mathrm{max}(G_{\text{OPT}}(\mathcal{Q}^*))$. Hence $G_{\text{OPT}}(\mathcal{Q}^*)$ is a globally optimal network. Therefore we immediately obtain a fast (polynomial-time) algorithm for constructing a globally optimal network if there exists a set of Steiner endpoint sequences $\Lambda$ with the following properties.

\begin{enumerate}
    \item[A.] The class $\Lambda$ is small and has a fast explicit construction,
    \item[B.] There exists a globally optimal network $N^*$ with Steiner edges $s_1y_1,...,s_\rho y_\rho$, and a Steiner endpoint sequence $\mathcal{Q}^*\in \Lambda$, such that $y_i\in Y_i^*$ for all $i$,
    \item[C.] For every $\mathcal{Q}\in \Lambda$ there exists a fast explicit construction of $G_{\text{OPT}}(\mathcal{Q})$,
    \item[D.] For every $\mathcal{Q}\in \Lambda$ the graph $G_{\text{OPT}}(\mathcal{Q})$ is 2-connected.
\end{enumerate}

In particular, an algorithm utilising these properties simply constructs $\Lambda$ and then selects a cheapest network that results from constructing $G_{\text{OPT}}(\mathcal{Q})$ for every $\mathcal{Q}\in \Lambda$. The bulk of this paper is devoted to describing how a set $\Lambda$ satisfying Properties (A)--(D) is constructed.

%rewrite this next par once later changes have been made. make nice breakdown of sections, plus that next section is related to property C
In the next section we demonstrate that Property (C) holds for a broad class of Steiner endpoint sequences. Section \ref{nolink} constructs a set of Steiner endpoint sequences $\Lambda_0$ then goes on to prove that Properties (A) and (B) are satisfied for the pair $\mathcal{N}_0,\Lambda_0$, where $\mathcal{N}_0$ is a certain restricted class of augmented networks. In Section \ref{makeCon} we transform the set $\Lambda_0$ into a new set $\Lambda_1$ such that Properties (A)--(D) hold for the pair $\mathcal{N}_0,\Lambda_1$. Finally, in Section \ref{withLink} we extend $\Lambda_1$ to a set $\Lambda$ so that all four properties hold for the pair $\mathcal{N},\Lambda$, where $\mathcal{N}$ is the set of all augmented networks containing a given underlying network.

\subsubsection{Constructing $G_{\text{OPT}}(\mathcal{Q})$}\label{jusdoit}
Motivated by Corollary \ref{leafLim} we assume for the remainder of this paper, until we present the $2$-Bottleneck algorithm, that $G_\mathrm{UN}$ denotes a fixed but arbitrary underlying network with $b(G_\mathrm{UN})\leq \Delta k$. Unless stated otherwise, any augmented network is assumed to contain $G_\mathrm{UN}$. For any Steiner endpoint sequence $\mathcal{Q}=\langle (s_1,Y_1),...,(s_\rho,Y_\rho) \rangle$, a \textit{representative of} the candidate type $G(\mathcal{Q})$ is any graph $G=G_\mathrm{UN}+ E_\mathrm{SE}(G)$, where $E_\mathrm{SE}(G)=\{s_1y_1,...,s_\rho y_\rho\}$ and $y_i\in Y_i$ for every $i$.
%check the use of y_i vs x_i
Clearly then $G_{\text{OPT}}(\mathcal{Q})$ is a representative of $G(\mathcal{Q})$. The \textit{Steiner topology} of a graph $G$ is the unlabelled topology of the graph induced by the Steiner edges of $G$. Since each $Y_i$ is a subset of $X$ or is a singleton containing a Steiner point it follows that the Steiner topologies of any pair of representatives of $G(\mathcal{Q})$ are isomorphic; hence we refer to this topology as the \textit{Steiner topology of} $G(\mathcal{Q})$.

The method of Bae et al. \cite{bae1}, which utilises farthest colour Voronoi diagrams to construct $G_\mathrm{OPT}(\mathcal{Q})$, only applies to trees. However, we demonstrate in this subsection that there exists a globally optimal network with an acyclic Steiner topology. This allows the use of Bae et al.'s method for the $2$-connected case -- a fact which is formalised in the next theorem.

\begin{theorem}\label{consOp}Let $\mathcal{Q}$ be a Steiner endpoint sequence such that the Steiner topology of $G(\mathcal{Q})$ is acyclic. If the degree of every Steiner point in $G(\mathcal{Q})$ is bounded by $\Delta$ then $G_{\mathrm{OPT}}(\mathcal{Q})$ can be constructed in time $O(n^k)$ in $L_p$ for $1<p<\infty$, and in time $O(n\log^2n)$ time for $L_1$ and $L_\infty$, where $k$ is the number of Steiner points.
\end{theorem}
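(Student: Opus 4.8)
The plan is to reduce the problem to the case already solved by Bae et al.~\cite{bae1}, namely the construction of a shortest-bottleneck geometric Steiner tree with a fixed acyclic topology using farthest colour Voronoi diagrams. The key observation is that $E_{\mathrm{SE}}(G(\mathcal{Q}))$ induces a forest on $S\cup X$ (by the acyclicity hypothesis), so each connected component of this forest is a tree whose internal (non-leaf) vertices are Steiner points and whose leaves are either terminals or the fixed endpoints dictated by the singleton $Y_i$'s. For a single such Steiner tree component, the optimisation problem is exactly of the form handled in~\cite{bae1}: we must place the Steiner points so that, for each edge $h_j = s_j y_j$ with $y_j$ ranging over a colour class $Y_j\subseteq X$ (or a fixed point), the maximum edge length is minimised. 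I would first make this correspondence explicit: each set $Y_j$ plays the role of a ``colour'', a smallest colour-spanning object gives the optimal location of a degree-one-neighbourhood Steiner point, and for a Steiner point adjacent to other Steiner points one propagates these constraints along the tree using the farthest colour Voronoi diagram, exactly as in Bae et al.

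First I would verify that the components of the Steiner forest can be optimised independently: since the underlying network $G_\mathrm{UN}$ is fixed and its edges contribute a constant to $\ell_{\mathrm{max}}$ that does not depend on the Steiner point placement, and since distinct Steiner components share no vertices, the bottleneck of $G(\mathcal{Q})$ is the maximum over the (fixed) terminal-edge lengths and the optimal bottlenecks of the individual Steiner components; minimising each independently minimises the whole. Next, for a single Steiner component which is a tree $T$ with at most $k$ Steiner points, each of degree at most $\Delta$, I would invoke Bae et al.'s algorithm directly. Their procedure computes, for a tree Steiner topology on a set of colour classes, a placement minimising the longest edge; the running time they establish is $O(n^k)$ for $1<p<\infty$ (the exponent coming from the number of Steiner points, since one effectively guesses a combinatorial cell of an arrangement of Voronoi diagrams for each Steiner point) and $O(n\log^2 n)$ for $p\in\{1,\infty\}$, where the polylogarithmic factor arises from the data structures used to compute farthest colour Voronoi diagrams under the $L_1$/$L_\infty$ metrics. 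Summing over the at most $k$ components (hence a constant number, since $\rho\le\Delta k$) does not change the asymptotic bound.

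The one genuine subtlety — and the step I expect to require the most care — is handling the fixed-endpoint edges, i.e.\ pairs $(s_i,\{s_i'\})$ where $Y_i$ is a singleton Steiner point, and pairs where $Y_i$ is a singleton terminal. In the pure tree setting of Bae et al.\ every leaf is a terminal with a prescribed colour; here a leaf of the Steiner forest may be another Steiner point that is itself being optimised, or the topology may force an edge between two Steiner points. Acyclicity guarantees we can root each component and process it from the leaves inward, treating an already-placed child Steiner point as a degenerate colour class (a single point), so that the farthest colour Voronoi diagram machinery still applies; but one must check that this degenerate case is within the scope of~\cite{bae1} (or else argue it separately, which is routine since a single point is trivially a colour-spanning set). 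I would also note in passing that the degree bound $\Delta\le 7$ keeps the local arrangement at each Steiner point of bounded complexity, which is what keeps the per-Steiner-point cost polylogarithmic (resp.\ the exponent linear in $k$) rather than worse. Once these points are dispatched, the theorem follows by directly citing the complexity analysis of~\cite{bae1} applied componentwise.
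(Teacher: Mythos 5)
Your proposal is correct and is essentially the paper's own argument: the paper's proof is a one-line citation to Bae et al.~\cite{bae1}, observing that their ``abstract topology'' concept corresponds to candidate types (your colour-class correspondence), and the stated runtime bounds are read off directly from that reference. One small note: the leaves-inward rooting you sketch to handle internal Steiner--Steiner edges is not actually needed---Bae et al.'s tree topologies already contain edges between pairs of Steiner points and their algorithm jointly optimises all Steiner positions, so there is no ``already-placed child'' to treat as a fixed colour, and a greedy leaf-to-root pass would not by itself yield the bottleneck-optimal placement.
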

\begin{proof}
The result follows directly from \cite{bae1}: the ``abstract topology" concept in \cite{bae1} is analogous to our concept of candidate types.
\end{proof}

To demonstrate that there exists a globally optimal network with an acyclic Steiner topology we begin with the following definitions. Let $G$ be a $2$-connected graph. The removal of a \textit{critical edge} reduces the connectivity of $G$. A \textit{chord path} in $G$ is a path $P$ connecting two points of a cycle $C$ of $G$, such that $P$ and $C$ share no edges. An edge $e$ is critical in $G$ if and only if $e$ is not a chord path \cite{dirac}. We define a \textit{degree-two chord path} as a chord-path where all interior vertices are degree-two Steiner points. A chord path $P$ of $G$ is \textit{critical} if the removal of $P$ from $G$ reduces connectivity. Therefore degree-two chord paths are never critical. Let $N$ be any augmented network.

\begin{lemma}\label{critStuff}If all Steiner edges and chord paths of $N$ are critical then there is no chord path in $N$ where all interior vertices are Steiner points.
\end{lemma}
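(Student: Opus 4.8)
The plan is to argue by contradiction. Suppose $N$ satisfies the hypothesis (every Steiner edge and every chord path of $N$ is critical) but, contrary to the conclusion, $N$ contains a chord path $P$ all of whose interior vertices are Steiner points. We want to derive a contradiction with the criticality assumptions.

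First I would examine the structure of $P$. If $P$ has no interior vertices then $P$ is a single edge $e$; since $P$ is a chord path, $e$ is not critical by the characterisation $e$ is critical iff $e$ is not a chord path \cite{dirac}, which directly contradicts the hypothesis that all Steiner edges of $N$ are critical (note $e$ must be a Steiner edge here — actually we should check: a chord path that is a single edge with both endpoints on a cycle could be a terminal edge, so here I would instead observe that since $P$ has all interior vertices Steiner points and we are told such a $P$ exists, if $P=e$ is a single edge the relevant case is handled by the chord-path criticality assumption itself, giving the contradiction immediately since $P$ is a chord path hence non-critical). So the substantive case is that $P$ has at least one interior vertex, all of which are Steiner points. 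Now I would split on the degrees of these interior Steiner points. If every interior vertex of $P$ has degree two, then $P$ is a degree-two chord path, and by the observation preceding the lemma degree-two chord paths are never critical; this contradicts the assumption that all chord paths of $N$ are critical.

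The remaining case — where some interior Steiner point $s$ of $P$ has degree at least three — is where the main work lies, and I expect this to be the principal obstacle. Here I would use the degree-$\geq 3$ vertex $s$ to locate a Steiner edge $f$ incident to $s$ but not lying on $P$. The idea is that $f$ ought to fail to be critical: since $s$ already lies on the cycle $C\cup P$ (where $C$ is the cycle associated to the chord path $P$), the edge $f$ hangs off this cycle, and I would argue $f$ is itself a chord path (connecting $s$, a vertex of the cycle $C \cup P$, to its other endpoint) — but for this to contradict the hypothesis I need its other endpoint also to lie on a suitable cycle, or more precisely I need to invoke the characterisation that $f$ is critical iff $f$ is not a chord path. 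The cleanest route is probably: since $N$ is $2$-connected and $s$ has degree $\geq 3$, removing the two $P$-edges at $s$ leaves $s$ still attached via $f$, so $N - f$ is still $2$-connected (using that $P$ minus its portion through $s$ reroutes connectivity, together with $2$-connectivity of the ambient graph), whence $f$ is not critical — contradicting that all Steiner edges of $N$ are critical.

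Thus all cases yield a contradiction, proving the lemma. The delicate point to get exactly right is the last case: I must be careful that deleting $f$ (or the relevant edge off the cycle) genuinely preserves $2$-connectivity, which requires knowing that $s$ is not a cut vertex of $N - f$ — this follows because $s$ retains degree $\geq 2$ along $P$ and $P$'s endpoints are joined through the cycle $C$, so every path through $s$ can be rerouted. I would phrase this using the chord-path characterisation of critical edges (\cite{dirac}) rather than a direct connectivity computation, since that keeps the argument short: $f$ is a chord path of the cycle $C \cup P$ (its endpoints $s$ and the other endpoint of $f$ can be connected by an edge-disjoint path within $C \cup P$ together with a further subpath, or $f$ shares no edge with a cycle through both its endpoints), hence $f$ is not critical, the desired contradiction.
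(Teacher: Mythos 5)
Your overall structure (contradiction, case split on interior degree, focus on a degree-$\geq 3$ interior Steiner point $s$ and an edge $f=su$ off $P$) matches the paper's setup, and your Cases 1 and 2 are correct. The problem is Case 3, which is exactly where the paper does the real work, and your argument there has a genuine gap.

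You want to conclude directly that $f$ is not critical, either by arguing that $f$ is a chord of a cycle or by arguing that $N-f$ stays $2$-connected. Neither reason you give actually establishes this. To show $f$ is a chord path you need a cycle containing \emph{both} endpoints $s$ and $u$ but not the edge $f$ itself; you only observe that $s$ lies on a cycle inside $C\cup P$, and say nothing that forces $u$ onto such a cycle. In fact $u$ need not lie anywhere near $C\cup P$, and in general $f$ \emph{can} be critical: take $u$ a degree-$2$ vertex whose only other edge $ux$ goes to a cycle vertex $x$; then $N-f$ leaves $u$ of degree $1$, so $f$ is critical. (In that specific configuration the lemma's hypothesis also fails because the chord path $s$--$u$--$x$ is non-critical, so it is not a counterexample to the lemma — but it is a counterexample to your claimed step, which never invokes that other chord path.) Your alternative phrasing, that removing the two $P$-edges at $s$ ``leaves $s$ still attached via $f$, so $N-f$ is still $2$-connected,'' is not a valid implication: well-connectedness of $s$ in $N-f$ says nothing about whether $u$ or some other vertex becomes a cut-vertex of $N-f$.

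What the paper does instead, and what your argument is missing, is a \emph{minimality} device. The paper picks $P$ to be a chord path with all-Steiner interior having the fewest edges, then uses $2$-connectivity to find a path $P'$ from $u$ (a neighbour of $s$ off $P$) to an endpoint $v$ of $P$ avoiding $s$, and observes that wherever $P'$ first meets $C\cup P$, one obtains a new cycle whose chord is a \emph{proper} subpath of $P_v$ or $P_w$ — a strictly shorter all-Steiner chord path, contradicting minimality. That contradiction is against the choice of $P$, not against the criticality of $f$ itself. To repair your argument you would need to introduce the minimality of $P$ and replace the ``$f$ is non-critical'' step with the shorter-chord-path contradiction; as written, the key step does not go through.
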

\begin{proof}
Of all chord paths of $N$ consisting entirely of Steiner points let $P$ be one containing the least number of edges, and let $C$ be a cycle in $N$ of which $P$ is a chord path. Let the end-vertices of $P$ be $v,w$, let $s$ be a Steiner point of degree at least three in the
interior of $P$, and let $u$ be a neighbour of $s$ not on $P$. Finally, let $P_v,P_w$ be the subpaths of $P$ that partition the edge-set of $P$
at $s$. By the $2$-connectivity of $N$ there exists a path $P^\prime$ that connects $u$ and $v$ but does not contain $s$. Regardless of the
location of the first intersection of $P^\prime$ with $C\cup P$ a new cycle is produced with a chord path formed by a subpath of $P_v$ or $P_w$.
This contradicts the fact that $P$ is a chord path with the least number of edges.
\end{proof}

\begin{theorem}\label{Tcord}If all Steiner edges and chord paths of $N$ are critical, then there is no cycle in $N$ consisting of Steiner points only.
\end{theorem}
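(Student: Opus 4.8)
The plan is to argue by contradiction and reduce to Lemma \ref{critStuff}. Suppose $N$ contains a cycle $C$ every vertex of which is a Steiner point; I would build from $C$ a chord path of $N$ all of whose interior vertices are Steiner points, contradicting Lemma \ref{critStuff}.

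First I would produce a vertex outside $C$. Since the paper assumes $n=|X|\ge 2$ and $N$ spans $X$, there is a terminal $u\in V(N)$, and $u\notin V(C)$ because terminals are never Steiner points. Using that $N$ is $2$-connected together with the fan lemma (the form of Menger's theorem giving, for a vertex and a vertex set of size $\ge 2$, two paths from the vertex to the set meeting only at the vertex and meeting the set only at their endpoints), I would obtain paths $P_1,P_2$ from $u$ to $V(C)$ with $P_1\cap P_2=\{u\}$, with interiors disjoint from $V(C)$, and with distinct endpoints $w_1,w_2\in V(C)$ (distinct because the two paths meet only at $u\notin V(C)$). The vertices $w_1,w_2$ split $C$ into two arcs whose edge sets partition $E(C)$; since a cycle of a simple graph has at least three vertices, at least one arc has two or more edges, and I would call that one $A_2$ and the other $A_1$.

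Next I would verify the two combinatorial facts that close the argument. First, $D:=P_1\cup A_1\cup P_2$ is a cycle of $N$: this follows from $P_1\cap P_2=\{u\}$, from the interiors of $P_1,P_2$ avoiding $V(C)$ (which contains $w_1$ and $w_2$), and from $A_1$ being a $w_1$--$w_2$ arc of $C$. Second, $A_2$ is a chord path of $D$: it joins the two vertices $w_1,w_2$ of $D$; it is edge-disjoint from $A_1$ because $A_1,A_2$ partition $E(C)$; and it is edge-disjoint from $P_1$ and from $P_2$ because the interiors of $P_1,P_2$ avoid $C$ while each $w_i$ lies on $C$. Since $A_2\subseteq C$, every interior vertex of $A_2$ is a Steiner point, so $A_2$ is a chord path with all interior vertices Steiner, contradicting Lemma \ref{critStuff}.

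I expect the only delicate points to be bookkeeping: guaranteeing that the fan paths are internally disjoint from $C$ (so that $A_2$ genuinely shares no edge with $D$), which is exactly what the fan lemma supplies; and guaranteeing that $A_2$ has an interior vertex so that Lemma \ref{critStuff} can be applied, which is handled by choosing $A_2$ to be an arc with at least two edges, possible because $|V(C)|\ge 3$. (If one instead allowed $A_2$ to be a single edge, that edge would be a Steiner edge acting as a chord of $D$, hence non-critical by the characterisation of critical edges, again contradicting the hypothesis that every Steiner edge of $N$ is critical.)
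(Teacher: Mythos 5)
Your proof is correct and takes essentially the same approach as the paper: both reduce to Lemma \ref{critStuff} by exhibiting an all-Steiner chord path lying inside the hypothetical Steiner cycle. The paper merely asserts, in two sentences, that any cycle of a $2$-connected graph that is not itself a cycle contains a subpath that is a chord path of some cycle; your fan-lemma argument is a careful justification of exactly that assertion in the case at hand.
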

\begin{proof}
Any cycle of a $2$-connected graph $N$, where $N$ is not a cycle, contains a subpath that is a chord of some cycle. Therefore the result follows from the previous lemma.
\end{proof}

If we consecutively remove from any augmented graph all non-critical edges and degree-two chord paths in an arbitrary order, the resultant graph remains $2$-connected and the length of its bottleneck does not increase. Therefore, by Theorem \ref{Tcord}, there exists a globally optimal network with an acyclic Steiner topology. This result, together with Proposition \ref{specN}, means we only need to consider Steiner endpoint sequences satisfying the two conditions of Theorem \ref{consOp}. Therefore we do not explicitly address Property (C) again in this paper. Observe also that these two conditions can be verified in constant time for a given Steiner endpoint sequence. The next subsection initiates the description of $\Lambda$.

\subsubsection{Augmented networks without linked sets}\label{nolink}
In specifying $\Lambda$ we first restrict our attention to augmented graphs that do not contain \textit{linked sets}, as defined below. When considering the general case (where linked sets are included) in Section \ref{withLink} and Section \ref{final}, the main subroutine described in this subsection, namely Function \texttt{BuildSES}, will be employed as part of a pre-processing stage. We first define the concept of linked sets and then state a number of results and definitions before presenting Function \texttt{BuildSES}.

A \textit{block path} of a graph $G$ is a subgraph $H$ such that $H_\mathrm{BCF}$ is a path in $G_\mathrm{BCF}$. If the blocks of a block path $H$ are written in order of adjacency as $B_1,...,B_{p_H}$ then there are two possible \textit{orientations} of $H$ which satisfy this order. A \textit{degree-two block path} of $G$ is a \textit{maximal} length block path $H$ such that the interior vertices of $H_\mathrm{BCF}$ are of degree two in $G_\mathrm{BCF}$. An \textit{external} Steiner edge is one that is incident to a terminal; all other Steiner edges are \textit{internal}.

\begin{figure}[htb]
  \begin{center}
    \includegraphics[scale=0.45]{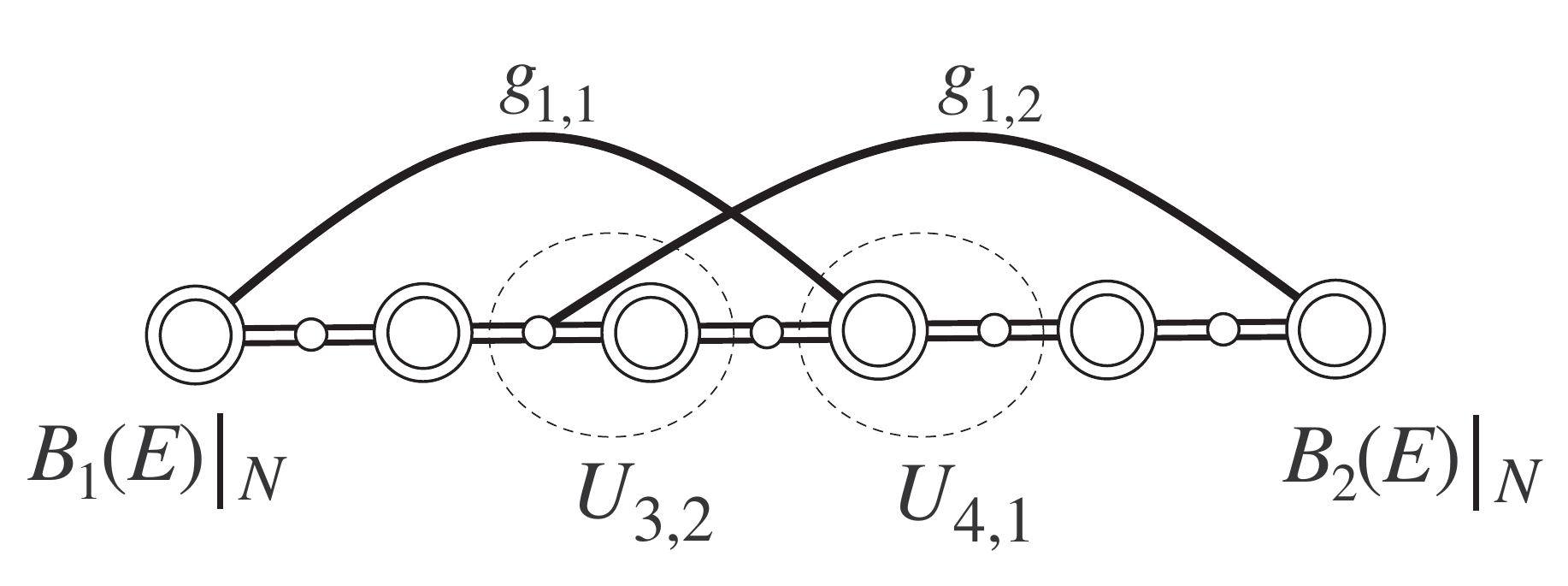}
  \end{center}
  \caption{A linked set $E_{\mathrm{LINK}}=\{g_{1,1},g_{1,2}\}$ with $a_1=4$ and $a_1'=3$}
  \label{figEgNew2}
\end{figure}

Let $N$ be an augmented network such that all Steiner edges and chord paths are critical. A \textit{path-forming} set of edges in $N$ is any set $E$ of Steiner edges of $N$ such that $N-E$ is a block path. Since all Steiner edges of $N$ are critical, any singleton Steiner edge set is path-forming. We denote the distinct leaf-blocks of $N-E$ by $B_1(E)\vert_N$ and $B_2(E)\vert_N$ (which are unique up to the orientation of $N-E$) and slightly abuse this notation by writing $B_1(e)\vert_N$ when we mean $B_1(\{e\})\vert_N$.

\noindent\textbf{Definition.} \textit{A path-forming set $E_{\mathrm{LINK}}=\{g_1,g_2\}$ is called a \textbf{linked set} in $N$ if $g_{1},g_{2}$ are external Steiner edges with Steiner endpoints in $\mathrm{int}(B_1(E_{\mathrm{LINK}})\vert_N)$ and $\mathrm{int}(B_2(E_{\mathrm{LINK}})\vert_N)$ respectively for some orientation.}

See Figure \ref{figEgNew2} for an illustration of a linked set $\{g_{1,1},g_{1,2}\}$ . The set $\{g_1,g_2\}$ in the graph $G''$ of Figure \ref{figBCF} is another example of a linked set.

Let $\mathcal{N}$ be the class of all augmented networks (containing $G_\mathrm{UN}$), such that every $N\in\mathcal{N}$ contains at most $\Delta k$ Steiner edges, and such that the chord paths and Steiner edges of every member of $\mathcal{N}$ are critical. Let $\mathcal{N}_0\subset \mathcal{N}$ be the class of all graphs in $\mathcal{N}$ that have no linked sets.

\begin{lemma}[\cite{brazil2}]\label{joinB}If $G$ is a block path with leaf-blocks $B,B'$, then, for any $x\in \mathrm{int}(B)$ and $x'\in \mathrm{int}(B')$, the graph $G+xx'$ is $2$-connected.
\end{lemma}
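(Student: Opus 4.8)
\textbf{Proof plan for Lemma \ref{joinB}.}

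The plan is to show that after adding the edge $xx'$ no single vertex can disconnect the graph, using the block-cut-forest structure of the block path $G$. Write the blocks of $G$ in order of adjacency as $B=W_1,W_2,\dots,W_q=B'$, with $z_i$ the cut-vertex shared by $W_i$ and $W_{i+1}$ for $1\le i\le q-1$. The key structural fact I would isolate first is that in a block path every vertex lies on a path from $\mathrm{int}(B)$ to $\mathrm{int}(B')$ that travels "along the spine": more precisely, for any vertex $v$ of $G$, if $v\in W_i$ then there is a $v$--$z_{i-1}$ path and a $v$--$z_i$ path inside the $2$-connected block $W_i$ (taking $z_0\in\mathrm{int}(B)$ and $z_q\in\mathrm{int}(B')$ to be any chosen interior vertices), and these, concatenated with paths through the successive blocks, give a $v$--$x$ path and a $v$--$x'$ path. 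Because each $W_i$ is $2$-connected (recall the convention $c(K_1)=c(K_2)=2$, so even trivial blocks behave correctly), inside $W_i$ one can route around any single prescribed vertex.

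Now let $a$ be an arbitrary vertex of $G$; I must show $(G+xx')-a$ is connected. Every vertex $v\neq a$ has, in $G$, a path to $x$ and a path to $x'$; if $a$ lies on neither, $v$ is still joined to $x$ in $G-a$ and we are done for that $v$. The only obstruction is a vertex $v$ that is separated from both $x$ and $x'$ in $G-a$; since $G$ is connected with cut-vertices exactly the $z_i$, this forces $a$ to be one of the cut-vertices, say $a=z_j$, and $v$ to lie in a block "beyond" $z_j$ in one direction, say in $W_1\cup\dots\cup W_j$ minus $\{z_j\}$ on the $B$-side. But here is where the new edge $xx'$ earns its keep: $v$ still reaches $x\in\mathrm{int}(B)$ through the blocks $W_i$ with $i\le j$ (none of which contains $z_j$ in its interior — $z_j$ is a cut-vertex, hence only an endpoint-type vertex of $W_j$, and removing it from $W_j$ leaves $W_j-z_j$ connected by $2$-connectivity, with $z_{j-1}$ still present to chain back toward $x$), and then the edge $xx'$ carries us to $x'\in\mathrm{int}(B')$, whence to every vertex on the $B'$-side of $z_j$ via the blocks $W_i$ with $i>j$. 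So $x$ and $x'$ lie in the same component of $(G+xx')-z_j$, and every other vertex attaches to this component; hence $(G+xx')-a$ is connected for every $a$, giving $2$-connectivity.

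The main obstacle, and the step deserving the most care, is the routing claim inside a single block when its shared cut-vertex is deleted: I need that for a $2$-connected block $W_i$ and vertices $z_{i-1},z_i\in V(W_i)$, the graph $W_i-z_i$ still contains a path from any $v\in W_i\setminus\{z_i\}$ to $z_{i-1}$ (and symmetrically), which is exactly $2$-connectivity of $W_i$ applied to the pair $\{z_i\}$ — but one must handle the degenerate blocks $K_1,K_2$ separately, where the convention $c=2$ is what licenses the same conclusion, and one must check the boundary cases $i=1$ and $i=q$ where $z_0,z_q$ are interior vertices of $B,B'$ rather than cut-vertices. A clean way to package all of this is to prove the single auxiliary claim "for every vertex $a$ of $G$ and every vertex $v\ne a$, at least one of $x,x'$ is reachable from $v$ in $G-a$," and then observe that $G+xx'$ identifies the components of $x$ and $x'$ in $G-a$; the lemma follows immediately. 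I would also remark that this is essentially the standard fact that adding an edge between the two "ends" of a path of $2$-connected blocks yields a $2$-connected graph, so the write-up can afford to be brief.
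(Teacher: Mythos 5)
The paper does not actually prove this lemma---it is stated with a citation to \cite{brazil2}---so there is no in-paper argument to compare against. Your proof is correct and is the standard argument: if a vertex $a$ is not a cut-vertex of $G$ then $G-a$ is already connected, and if $a=z_j$ is a cut-vertex then $G-z_j$ has exactly two components (because each $W_i$ is $2$-connected, so $W_j-z_j$ and $W_{j+1}-z_j$ remain connected), with $x\in\mathrm{int}(W_1)$ on one side and $x'\in\mathrm{int}(W_q)$ on the other, and the new edge $xx'$ merges them. One small remark: the write-up spends more effort than needed on the ``routing inside $W_i$'' claim and on the $K_1,K_2$ degeneracies. The only fact really used is that deleting a single vertex from any block leaves it connected, which is immediate from the convention $c(K_1)=c(K_2)=2$; and a $K_1$ block cannot occur inside a block path of length $q\geq 2$ at all, since it has no vertex to share as a cut-vertex. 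The core observation---a cut-vertex of a block path separates the two leaf-block interiors, and $xx'$ re-joins them---is all that is needed, and you do state it cleanly in the end.
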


By the above lemma, the class $\mathcal{N}_0$ is nonempty since any linked set $\{s_1x,s_2x'\}$ in some $N'\in\mathcal{N}$ can be replaced by the single edge $s_1s_2$, resulting in a $2$-connected graph with fewer linked sets than $N'$.

Function \texttt{BuildSES}, presented below, essentially constructs a set of Steiner endpoint sequences $\Lambda_0$ satisfying properties analogous to Properties (A)--(D), except that the representatives of any candidate topology with a sequence in $\Lambda_0$ belong to $\mathcal{N}_0$. We first need a few more preliminary definitions and results.

Let $P=\langle e_1,...,e_{\vert E_\mathrm{SE}(N)\vert}\rangle$ be an ordering of the Steiner edges of some $N\in{\mathcal{N}}_0$. Let $N_j:=N-\{e_i:1\leq i\leq j\}$ for $1\leq j\leq p_N$. Furthermore, suppose that the following conditions hold on $P$:
\begin{enumerate}
\item If $N_j$ is connected, where $j\geq 1$, then $e_j=xy$ where $x\in \mathrm{int}(B),y\in \mathrm{int} (B')$ for some pair of distinct leaf blocks $B,B'$ of $N_j$.
\item If $N_j$ is disconnected, where $j\geq 2$, then $e_j=xy$ where $x$ and $y$ are in distinct components of $N_j$.
\end{enumerate}

We then call $P$ a \textit{branching decomposition} of $N$.

\begin{figure}[htb]
  \begin{center}
    \subfigure[]{\label{figEgs4-1}\includegraphics[scale=0.42]{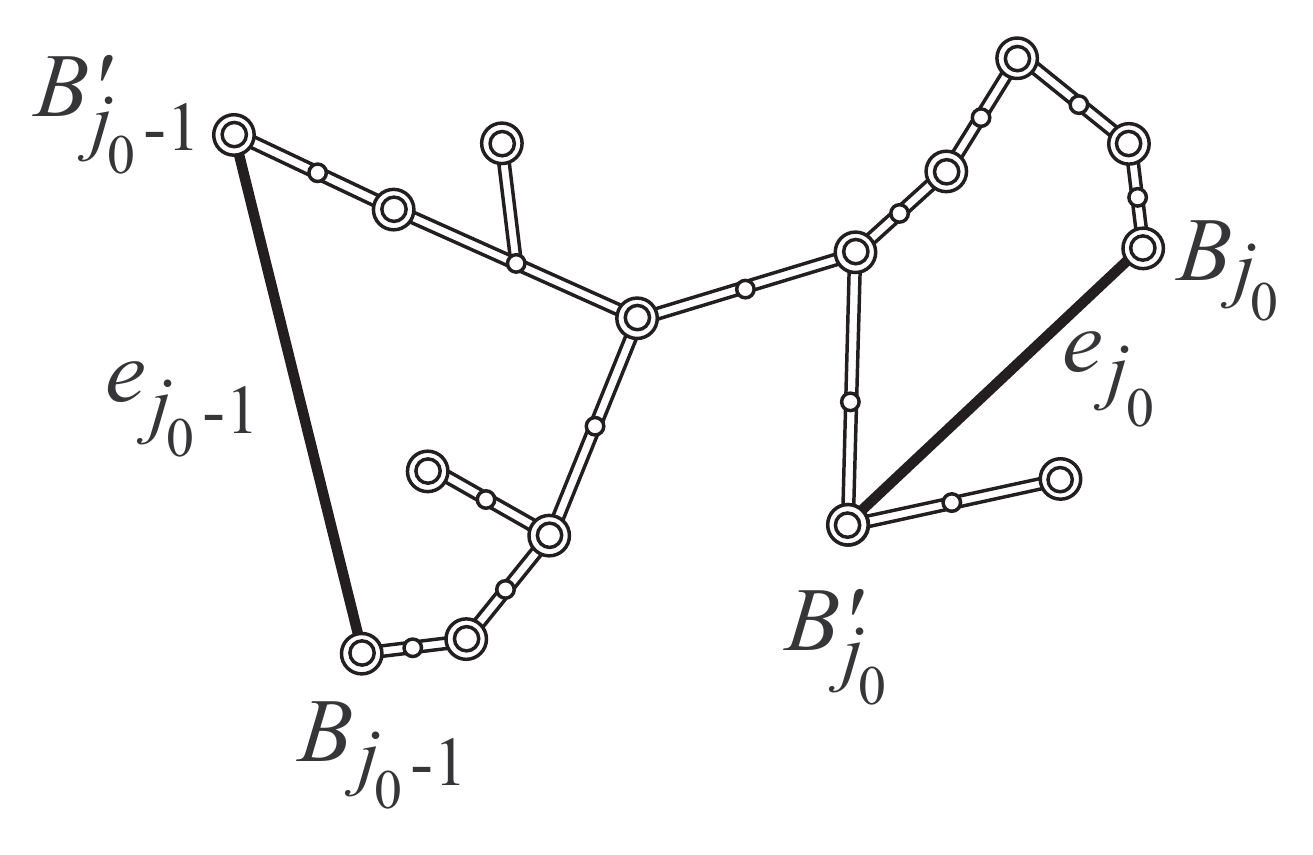}}
    \subfigure[]{\label{figEgs4-2}\includegraphics[scale=0.42]{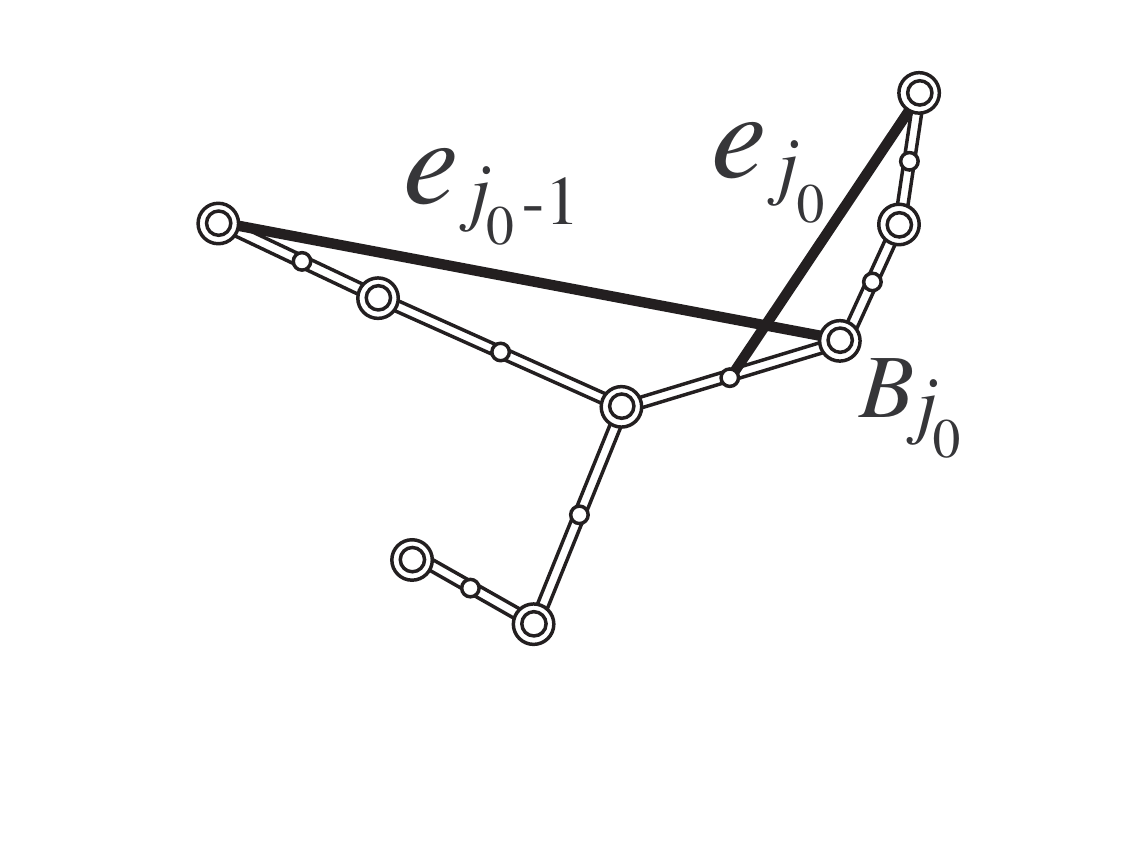}}
  \end{center}
  \caption{Examples of two cases from Proposition \ref{anyBranch}}
  \label{figEg4}
\end{figure}

\begin{proposition}\label{anyBranch}Any $N\in{\mathcal{N}}_0$ has a branching decomposition.\label{hasBranch}
\end{proposition}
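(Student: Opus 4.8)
The plan is to construct a branching decomposition of a given $N\in\mathcal{N}_0$ by peeling off Steiner edges one at a time, always choosing the next edge to remove so that the two required conditions are preserved. First I would handle the connected phase. As long as the current graph $N_j$ is connected, it is $2$-connected if $j=0$ (since $N\in\mathcal{N}$) and, more importantly, since all Steiner edges of $N$ are critical and no linked set is present, I want to argue that $N_j$ (for $j\geq 1$, when still connected) is a block path. The key observation is that removing a critical Steiner edge from a $2$-connected graph either disconnects it or turns it into a graph whose only $2$-connected ``obstructions'' have been localized; combined with the no-linked-set hypothesis and Theorem~\ref{Tcord} (no Steiner-only cycles), one shows inductively that every connected $N_j$ is a block path and hence has two distinct leaf blocks $B,B'$. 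I would then need a Steiner edge $e$ whose removal keeps $N_j$ connected (if one exists) and has its endpoints in $\mathrm{int}(B)$ and $\mathrm{int}(B')$ for the appropriate orientation: this is essentially forced, because the Steiner edges not lying ``along'' the block path must be the chords joining the two ends, and the no-linked-set condition is exactly what prevents two such chords from coexisting, so after removing one chord-like edge the graph is a shorter block path.

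Second, I would handle the disconnected phase. Once $N_j$ first becomes disconnected, I claim it has exactly two components (removing a single critical edge from a connected graph creates exactly one extra component). For $j'>j$ I then simply remove, at each step, any remaining Steiner edge joining two distinct components — such an edge exists as long as the Steiner edges that remain are not all within single components, and the process terminates when no Steiner edges remain. Since the total number of components can only grow by one with each removal, condition~(2) is automatically satisfiable throughout: whenever two distinct components are joined by some remaining Steiner edge we remove it; if no remaining Steiner edge crosses components then, because $N$ was connected and we only ever removed Steiner edges, all remaining edges are terminal edges and we have in fact exhausted $E_{\mathrm{SE}}(N)$. The mild subtlety is verifying that we never get ``stuck'' with Steiner edges left but none of them crossing a component boundary; this follows because a Steiner edge lying entirely inside one component of $N_{j'}$ would, together with that component, have been a $2$-connected piece that was never broken, contradicting criticality of that Steiner edge in $N$ (its removal from $N$ is supposed to reduce connectivity, but it would be a chord within a component).

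The order of the argument, then, is: (i) set up the peeling process and the induction hypothesis ``$N_j$ connected $\Rightarrow$ $N_j$ is a block path''; (ii) in the connected case, use criticality, Theorem~\ref{Tcord}, and the absence of linked sets to produce an edge satisfying condition~(1) and to re-establish the block-path property for $N_{j+1}$ (or else all remaining Steiner edges lie along the path and $N_{j+1}$ becomes disconnected, triggering the second phase); (iii) in the disconnected case, track the number of components and use criticality to ensure a cross-component Steiner edge exists until $E_{\mathrm{SE}}(N)$ is exhausted; (iv) conclude that the constructed ordering $P$ is a branching decomposition.

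The main obstacle I anticipate is step (ii): rigorously showing that a connected $N_j$ with $N\in\mathcal{N}_0$ must be a block path whose ``surplus'' Steiner edges are exactly chords between the two leaf-block interiors, and that one of these can always be removed while keeping connectivity and re-establishing the block-path structure. This requires a careful case analysis of how a critical Steiner edge sits relative to the block-cut forest — essentially ruling out, via the no-linked-set and all-chord-paths-critical hypotheses, the configurations in which removing every individual Steiner edge would disconnect the graph even though several Steiner edges are present. The two pictures in Figure~\ref{figEg4} presumably correspond to the two outcomes (``an edge can be removed keeping connectivity'' versus ``every removal disconnects, so we pass to the disconnected phase''), and the proof will need to show these are exhaustive.
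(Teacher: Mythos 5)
Your proposal diverges from the paper's proof in a way that leaves a real gap. The paper does not peel edges greedily: it first fixes a \emph{maximal} set $E_0$ of Steiner edges whose removal leaves $N$ connected, and then proves, by an exchange argument (take an ordering of $E_0$ that minimises the smallest non-branching index $j_0>1$, and derive a contradiction through a three-case analysis involving $\mathcal{P}_{j_0-1}$ and $\mathcal{P}_{j_0}$, using the absence of linked sets and the criticality of chord paths), that some ordering of $E_0$ makes every index a branching index. The remaining edges $\overline{E}_0$ are then appended in arbitrary order, which works precisely because maximality of $E_0$ forces every edge of $\overline{E}_0$ to be a bridge of $N-E_0$, and bridges of a graph stay bridges in its components. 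Figure~\ref{figEg4} illustrates Cases~1 and~3(a) of this exchange argument, not a ``remove-or-disconnect'' dichotomy.

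The concrete problems with your plan are these. First, the inductive claim that every connected $N_j$ is a block path is too strong. It holds for $N_1$ (if $e_1$ is critical then every cut vertex of $N-e_1$ separates the endpoints of $e_1$), but it can fail for $N_2$: if $e_2$ sits inside a $2$-connected block $B_i$ of $N_1$ and removing it splits $B_i$ into a block path $C_1,\dots,C_q$, there is nothing forcing the cut vertices of $N_1$ that lie on $B_i$ to land in the leaf blocks $C_1,C_q$; they can fall in an interior $C_\ell$, producing a branched block-cut tree. The definition of a branching decomposition does not require $N_j$ to be a block path — only that $N_j$ has two distinct leaf blocks whose interiors catch the endpoints of $e_j$ — and the paper never asserts the stronger property. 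The step where you say this is ``essentially forced'' and attribute it to the no-linked-set hypothesis is exactly where the argument is missing; the paper's swap-the-two-edges contradiction is doing the work you are deferring. Second, your argument in the disconnected phase is not sound: a Steiner edge lying entirely inside one component of $N_{j'}$ and not being a bridge there does not make it a chord of a cycle in $N$ (having a path between its endpoints is weaker than lying on a cycle avoiding the edge), so it does not contradict criticality of $e$ in $N$. The paper sidesteps this by using maximality of $E_0$: every $e\in\overline{E}_0$ is a bridge of $N-E_0$ by maximality, and bridges persist under further edge deletions. You should adopt the maximal-$E_0$-plus-exchange structure rather than trying to prove block-path invariants along a greedy peeling.
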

\begin{proof}
Let $N$ be any graph in ${\mathcal{N}}_0$ and let $E_0$ be a maximal set of Steiner edges that can be removed from $N$ so that $N-E_0$ is connected. Suppose that we remove the edges in $E_0$ from $N$ in some order $P_0=\langle e_{1},...,e_{\vert E_0\vert}\rangle$, and let $N_j$ be defined as before on $0\leq j\leq \vert E_0\vert$, with $N_0:= N$. If, for some $j\geq 1$, it holds that $e_{j}$ is incident, in ${N}_{j-1}$, to the interiors of two distinct leaf blocks of ${N}_{j}$ we say that $j$ is a \textit{branching index}. Clearly, since all Steiner edges of $N$ are critical, $j=1$ is a branching index. We claim that there exists an ordering $P_0$ of $E_0$ such that every $1\leq j\leq \vert E_0\vert$ is a branching index.

Suppose that the claim is false. Let $P'$ be an ordering of $\{e_{1},...,e_{\vert E_0\vert}\}$ that minimises the smallest non-branching index, and let $j_0>1$ be the smallest non-branching index in $P'$. For each $j\in\{j_0-1,j_0\}$, let $\mathcal{P}_{j}$ be the shortest block path of ${N}_{j_0}$ such that the leaf blocks of $\mathcal{P}_{j}$, say $B_{j},B_{j}'$, contain the respective endpoints of $e_j$ in $N$; see, for instance, Figure \ref{figEg4}. Therefore, in fact, the endpoints of $e_j$ are in the interiors (with respect to $\mathcal{P}_{j}$) of $B_{j}$ and $B_{j}'$. Hence, $\mathcal{P}_{j}+e_j$ is a block of ${N}_{j_0}+e_{j}$. Note that $\mathcal{P}_{j_0-1}+e_{j_0-1}$ is the only block of ${N}_{j_0}+e_{j_0-1}$ which is not a block of ${N}_{j_0}$.

We have three cases:

\begin{enumerate}
    \item At least one of $B_{j_0}, B_{j_0}'$ is neither a leaf-block of ${N}_{j_0}$, nor a block of $\mathcal{P}_{j_0-1}$. Then, since $j_0-1$ is a branching index and $\mathcal{P}_{j_0}+e_{j_0}$ is not a leaf block of ${N}_{j_0}+e_{j_0}$, both leaf blocks of $\mathcal{P}_{j_0-1}$ are leaf-blocks of ${N}_{j_0}$; see Figure \ref{figEgs4-1}. Hence we may swap the edges $e_{j_0}$ and $e_{j_0-1}$ in the ordering $P'$ to produce a new ordering which has a smallest non-branching index of $j_0-1$. This contradicts the minimality of $j_0$.
    \item $B_{j_0}$ and $B_{j_0}'$ are both in $\mathcal{P}_{j_0-1}$. But then $e_{j_0}$ is a chord of a cycle in $N$, and is therefore not critical. This contradicts the assumption that $N\in\mathcal{N}_0$.
    \item $B_{j_0}$ is a leaf-block of ${N}_{j_0}$, and $B_{j_0}'$ is in $\mathcal{P}_{j_0-1}$ (or vice-versa). This gives two subcases.
        \begin{enumerate}
            \item $\mathcal{P}_{j_0}+e_{j_0}$ is a leaf-block of ${N}_{j_0}+e_{j_0}$, and some endpoint of $e_{j_0-1}$, say $x$, is not in the interior of a leaf-block of ${N}_{j_0}$; see Figure \ref{figEgs4-2}. Then, since $j_0-1$ is a branching index, $x$ must be in the interior of $\mathcal{P}_{j_0}+e_{j_0}$ (with respect to ${N}_{j_0}+e_{j_0}$). But then $\{e_{j_0},e_{j_0-1}\}$ is a linked set in $N$,
                %[lemma isPath?]
                which is a contradiction.
            \item Both endpoints of $e_{j_0}$ are contained in leaf-blocks of ${N}_{j_0}$. By switching $e_{j_0}$ and $e_{j_0-1}$, as before, we get a new ordering where the index $j_0$ is a branching index. This either means that there are no non-branching indices in the new ordering (which happens when $\mathcal{P}_{j_0-1}+e_{j_0-1}$ is a leaf-block), or the new ordering contains a non-branching index at $j_0-1$. In both cases we get a contradiction.
        \end{enumerate}
\end{enumerate}

In all three above cases we get a contradiction. Therefore no such $P'$ exists, which means some ordering, say $P_0$, exists which has no non-branching indices. Next let $\overline{E}_0=E_{\mathrm{SE}}(N)-E_0$. Since $E_0$ is maximal every Steiner edge in $\overline{E}_0$ is a bridge of $N-E_0$. Clearly, for any graph $G$, if $e,e'$ are bridges of $G$ then $e'$ is a bridge in a component of $G-e$. Therefore, removing the edges in $\overline{E}_0$ in any order, say $\overline{P}_0$ from $N-E_0$ will increase the number of components of the resulting graph at each step. Therefore $P=(P_0,\overline{P}_0)$ is a branching decomposition of $N$.
\end{proof}

Function \texttt{BuildSES} populates the afore-mentioned set $\Lambda_0$ by constructing augmented networks in an iterative manner, where the pairs comprising the corresponding Steiner endpoint sequences are selected at each step. Starting with $G_\mathrm{UN}$, the function adds edges in the order of a reverse branching decomposition. At each addition of an edge $f_{t+1}$ to the current graph $F_t$, a pair $(s,Y)$ is chosen as the $(t+1)$-th pair in a Steiner endpoint sequence. Therefore $Y$ is either a singleton containing a Steiner point, or is a subset of terminals. The pairs $(s,Y)$ selected by the function are referred to as \textit{valid pairs}. When some choice of $Y$ at an iteration of Function \texttt{BuildSES} contains only terminals we refer to $Y$ as a \textit{valid subset of} $X$. We next define valid pairs and valid subsets more rigourously. We define the valid subsets so that they create a partition of $X$ with $O(1)$ members. This will ensure that Function \texttt{BuildSES} runs in polynomial time and that every network in $\mathcal{N}_0$ is a representative of a candidate type of some sequence in $\Lambda_0$. This latter property will be demonstrated in Theorem \ref{mainN}.

\begin{figure}[htb]
  \begin{center}
    \includegraphics[scale=0.6]{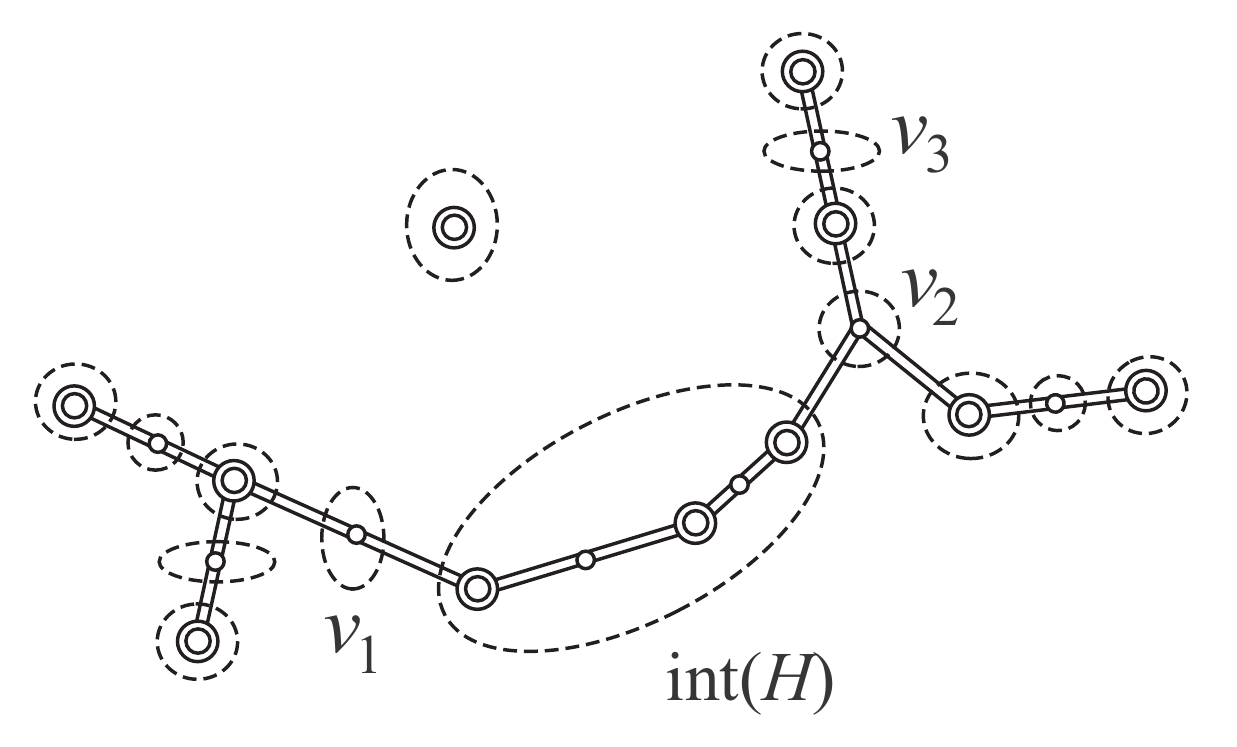}\\
  \end{center}
  \caption{Example of a partition of $X$ into valid subsets}
  \label{figEgs9}
\end{figure}

Let $V_\mathrm{CV}$ be the set of all cut-vertices of $G_\mathrm{UN}$ such that every vertex in $V_\mathrm{CV}$ is either of degree at least three in $(G_\mathrm{UN})_\mathrm{BCF}$, or is contained in a block of $G_\mathrm{UN}$ that is not of degree two in $(G_\mathrm{UN})_\mathrm{BCF}$; see Figure \ref{figEgs9} where $v_1,v_2$ and $v_3$ (amongst others) belong to $V_\mathrm{CV}$. Let $H$ be a degree two block path of $G_\mathrm{UN}$. The \textit{interior} of $H$ with respect to $G_\mathrm{UN}$ is the graph $\mathrm{int}(H)=H-\mathcal{B}-V_\mathrm{CV}$, where $\mathcal{B}$ contains all blocks of $H$ that are not of degree two in $(G_\mathrm{UN})_\mathrm{BCF}$. Let $\mathcal{V}$ be a partition of $X$ such that each member $X'$ of $\mathcal{V}$ is one of the following: the interior of a leaf-block of $G_\mathrm{UN}$; the interior of a degree-two block path of $G_\mathrm{UN}$; the vertices of an isolated block of $G_\mathrm{UN}$; or a singleton containing a vertex of $V_\mathrm{CV}$ (see Figure \ref{figEgs9}). Clearly $\mathcal{V}$ is unique and contains $O(1)$ members by the assumption (in Section \ref{jusdoit}) that $b(G_\mathrm{UN})\leq \Delta k$. The members of $\mathcal{V}$ are referred to as \textit{valid subsets of} $X$.

For any graph $F_t=G_\mathrm{UN}+ E_\mathrm{SE}(F_t)$, a pair $(s,Y)$, where $s\in S$ and $Y\subseteq X\cup S$, is called a \textit{valid pair} for $F_t$ if the following conditions hold.

\makeatletter
\renewcommand{\theenumi}{\Roman{enumi}}
\renewcommand{\labelenumi}{\theenumi.}
\renewcommand{\theenumii}{\alph{enumii}}
\renewcommand{\labelenumii}{\theenumii.}
\renewcommand{\p@enumii}{\theenumi.}
\makeatother
\begin{enumerate}
\item $Y$ is either a singleton containing a Steiner point $s'$ with $s\neq s'$, or $Y$ contains only terminals. Furthermore, there exists $u\in Y$ so that $su$ is not an edge of $F_t$.\label{q1}
\item If $F_t$ is disconnected then:
    \begin{enumerate}
        \item $s$ and $Y$ are in distinct components of $F_t$, and\label{q21}
        \item If $Y$ contains terminals then $Y$ is a valid subset of $X$.\label{q22}
    \end{enumerate}
\item  If $F_t$ is connected then:
    \begin{enumerate}
        \item $s$ is in the interior of a leaf-block $B$ of $F_t$, and\label{q31}
        \item If $Y=\{s'\}$ then $s'$ is in the interior of a leaf-block of $F_t$, distinct from $B$, and\label{q32}
        \item If $Y\subseteq X$ then $Y$ is a maximal subset of $X'\cap V(B')$, where $X'$ is some valid subset of $X$, and $B'$ is a leaf block of $F_t$ distinct from $B$.\label{q33}
    \end{enumerate}
\end{enumerate}

\begin{figure}[htb]
  \begin{center}
    \includegraphics[scale=0.7]{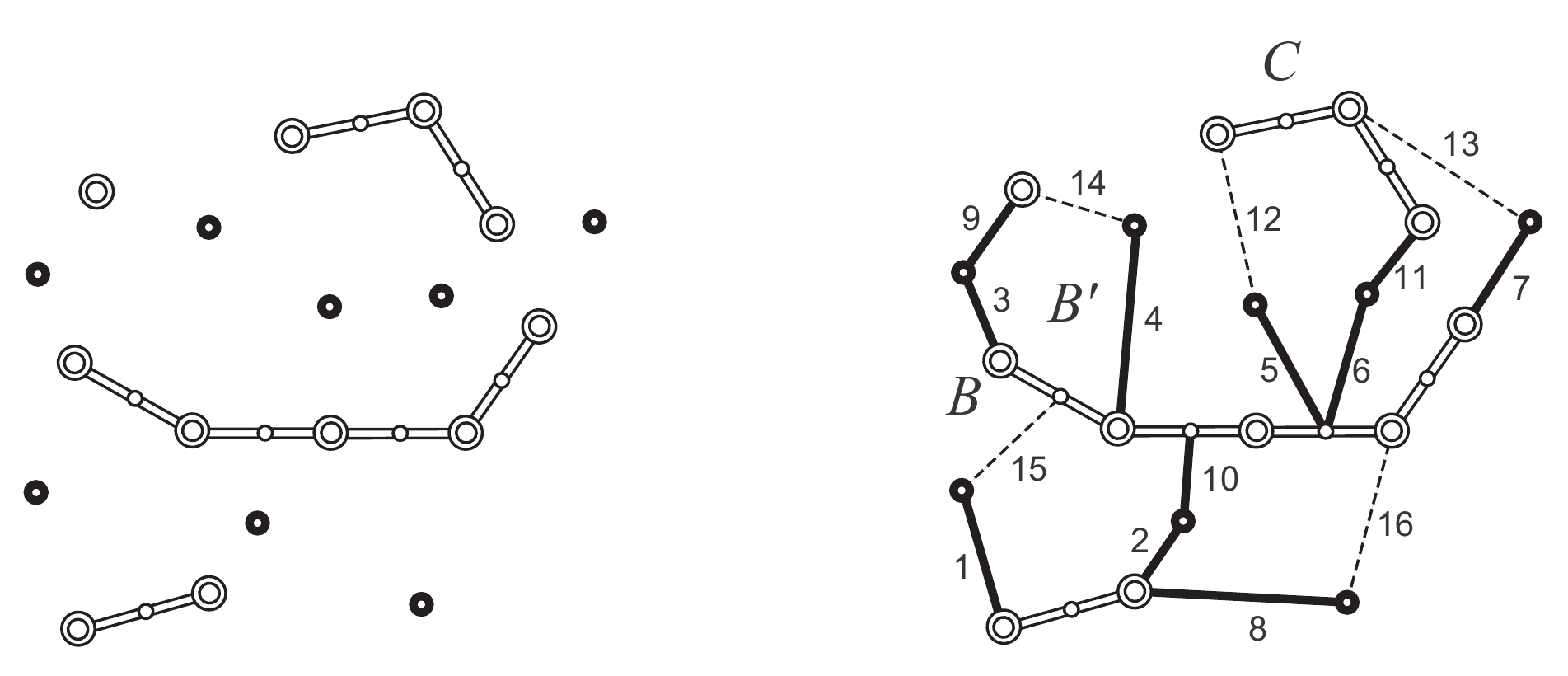}\\
  \end{center}
  \caption{An example construction by Function \texttt{BuildSES} of an augmented graph. The graph on the left is a given underlying graph with eight Steiner points, and serves as part of the input for the function. Numbers next to Steiner edges depict the order in which they were introduced to produce a $2$-connected graph}
  \label{figEgs19}
\end{figure}

Before formally presenting Function \texttt{BuildSES} we present an illustrative example in Figure \ref{figEgs19}, showing the basic steps that Function \texttt{BuildSES} performs. In this figure $F_0:=G_\mathrm{UN}$ is depicted in the left subfigure as a block-cut forest together with eight Steiner points. The sixteen Steiner edges introduced to produce a $2$-connected graph are depicted in the right subfigure by lines incident to Steiner points. Edge labels represent the order in which the Steiner edges were introduced by Function \texttt{BuildSES}, corresponding to the order of a reverse branching decomposition. Each Steiner edge $f_t$, where $1\leq t\leq 11$ (depicted by bold lines), joins two components of $F_{t-1}$. Each $f_t$ when $t>11$ (depicted by dashed lines) joins two distinct leaf-blocks of $F_{t-1}$.

Two aspects of the construction in Figure \ref{figEgs19} require further attention: notice firstly that $f_5$ and $f_6$ are incident to the same point. For this reason the block of $F_{12}$ containing edges $f_{5},f_{6},f_{11},f_{12}$ and component $C$ of $G_\mathrm{UN}$, is a leaf-block of $F_{12}$, and therefore the valid subsets in component $C$ are available for the formation of valid pairs at step $t=13$. In Function \texttt{BuildSES} the terminal endpoint of $f_t$ (if it has one) is chosen randomly within a valid subset, and therefore, since $f_5$ and $f_6$ are not necessarily adjacent in every possible execution of Function \texttt{BuildSES}, it is not clear that the endpoints for $f_{13}$ as given in Figure \ref{figEgs19} will be an available choice.

The second aspect that needs further consideration involves the ``order" in which terminal-neighbours of Steiner points occur within the interiors of degree-two block paths. In Figure \ref{figEgs19}, when $f_{14}$ is added to $F_{13}$, a new leaf-block $B'$ is formed containing edges $f_3,f_4,f_9,f_{14}$ and an isolated block of $G_\mathrm{UN}$. However, if the terminal endpoint of $f_{10}$ was contained in a block of $G_\mathrm{UN}$ that was closer to leaf-block $B$ of $G_\mathrm{UN}$ than the terminal endpoint of $f_4$, then $B'$ would not be a leaf-block of $F_{14}$ and therefore the valid subsets of $X$ contained in $B'$ would not be available at step $t=15$.

\begin{figure}[htb]
  \begin{center}
    \includegraphics[scale=0.5]{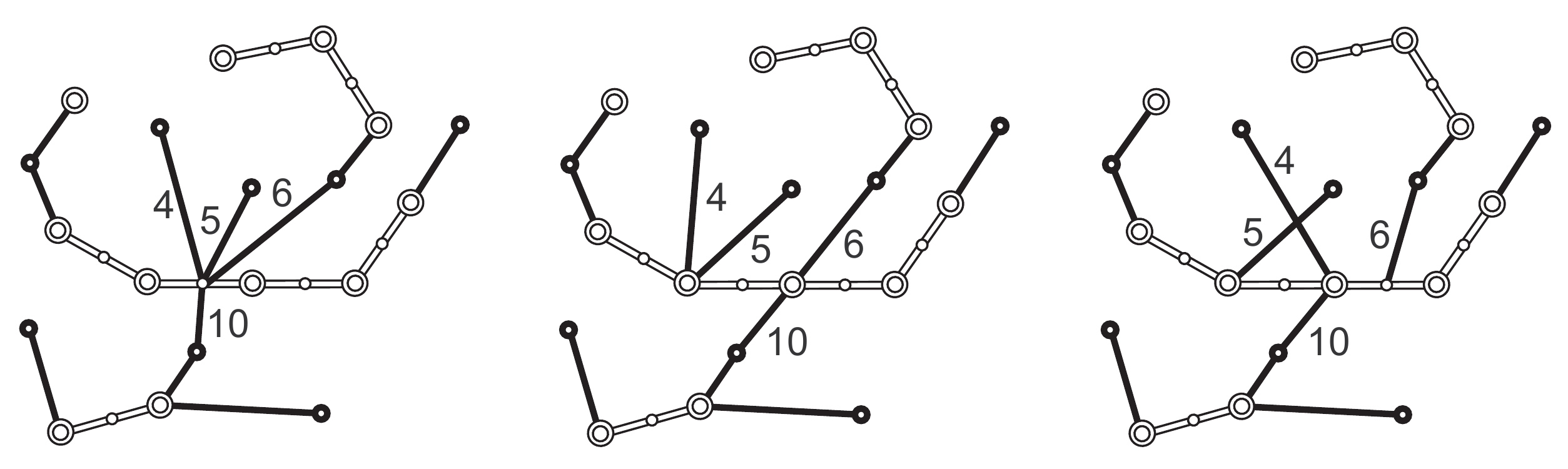}\\
  \end{center}
  \caption{Examples of graphs potentially output by Function \texttt{Order} on input $F_{t'}$ (from Figure \ref{figEgs19}) with $t'=11$}
  \label{figEgs20}
\end{figure}

Both the above aspects can easily be dealt with in $O(1)$ time in a subroutine which considers every possible relative ordering of terminal endpoints in all degree-two block path interiors of $G_\mathrm{UN}$, and by considering all cases that result when any two Steiner edges are incident to a common terminal. This is essentially because of our assumption that $b(G_\mathrm{UN})\leq \Delta k$ and that $k$ is constant. The subroutine which generates these new cases will henceforth be referred to as Function \texttt{Order}, and is always performed after step $t=t'$ of Function \texttt{BuildSES}, where $F_{t'}$ is connected but $F_{t'-1}$ is disconnected. The input for Function \texttt{Order} is the graph $F_{t'}$ and its output is the set of graphs that result after relocating terminal endpoints of Steiner edges for each new case. In the example of Figure \ref{figEgs19} we have $t'=11$, and three example graphs that Function \texttt{Order} could potentially output given input $F_{11}$ are illustrated in Figure \ref{figEgs20}. Since Function \texttt{Order} operates in an obvious way we do not provide further details on its structure.

We are now ready to present Function \texttt{BuildSES}, which explicitly constructs $\Lambda_0$ and also constructs exactly one $2$-connected representative $M_\mathcal{Q}$ for each $\mathcal{Q}\in\Lambda_0$. The procedure is recursive. We initiate the process by setting up the global variable $\Lambda_0:=\emptyset$ and then calling Function \texttt{BuildSES} with input: $F:=G_\mathrm{UN}$, the block-cut forest $F_\mathrm{BCF}$, and $t:=0$; the parameters $Y$ and $s$ initially have no values assigned to them.

\begin{algorithm}[H]
\SetAlgoLined
\NoCaptionOfAlgo
\KwIn{$F$, $F_{\mathrm{BCF}}$, $Y$, $s$, $t$}
\KwOut{$\Lambda_0$, $\{M_\mathcal{Q}:\mathcal{Q}\in\Lambda_0\}$}

FirstConnect $:= 0$
\tcc*[h]{\small{FirstConnect is the flag for Function \texttt{Order}}}\;
\eIf{$t>0$}
{
Let $x_t$ be a random element of $Y$\;
Let $s_t:=s$, let $Y_t:=Y$, and let $F_t$ be the graph that results by adding
edge $f_t=s_tx_t$ to $F$\;
If $F_t$ is connected but $F$ is disconnected then let FirstConnect $:=1$\;
Construct the block-cut forest $\mathrm{(}F_t\mathrm{)}_\mathrm{BCF}$\;
}
{
$F_t:=F$ and $(F_t)_\mathrm{BCF}:=F_\mathrm{BCF}$\;
}
\If{$t=\Delta k$ or there are no valid pairs for $F_t$}
{\If{$(F_t)_\mathrm{BCF}$ is an isolated vertex}
{Let $\mathcal{Q}:=\langle(s_1,Y_1),...,(s_t,Y_t)\rangle$ and let $M_\mathcal{Q}:=F_t$\;
Add $\mathcal{Q}$ to $\Lambda_0$\;
}
Exit\;
}
\eIf{\normalfont{FirstConnect} $=1$}
{
Run Function \texttt{Order} with input $F_t$ and output $\Psi$\;
}{
Let $\Psi:= \{F_t\}$\;
}
\For{every $F'\in\Psi$ and every valid pair $(s',Y')$ for $F'$}
{
Call Function \texttt{BuildSES} with input: $F'$, $F'_\mathrm{BCF}$, $Y'$, $s'$, $t+1$\;
}
\caption{\textbf{Function} BuildSES \label{BuildSES}}
\end{algorithm}

\begin{theorem}\label{mainN}Every $N\in \mathcal{N}_0$ has a Steiner endpoint sequence in $\Lambda_0$.
\end{theorem}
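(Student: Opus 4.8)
The plan is to take an arbitrary $N\in\mathcal{N}_0$ and show that some execution path of Function \texttt{BuildSES} produces a Steiner endpoint sequence $\mathcal{Q}$ for which $N$ is a representative of $G(\mathcal{Q})$. The starting point is Proposition \ref{anyBranch}: since $N\in\mathcal{N}_0$, it has a branching decomposition $P=\langle e_1,\dots,e_\rho\rangle$ (with $\rho\le\Delta k$). Reversing this order gives a sequence $f_1:=e_\rho,\dots,f_\rho:=e_1$ of Steiner edges, and the plan is to show that adding them in this order to $G_\mathrm{UN}$ is a legal execution of Function \texttt{BuildSES}: that is, at each step $t$, writing $F_{t-1}=G_\mathrm{UN}+\{f_1,\dots,f_{t-1}\}$ (which equals $N_{\rho-t+1}$ in the notation of the branching decomposition, up to the post-\texttt{Order} relabelling), the edge $f_t=s_tx_t$ arises from a valid pair $(s_t,Y_t)$ for some graph in the set $\Psi$ generated at that stage.

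The argument splits according to whether $F_{t-1}$ is disconnected or connected, mirroring the two conditions defining a branching decomposition. When $F_{t-1}$ is disconnected, condition 2 of the branching decomposition says the endpoints of $f_t$ lie in distinct components, which is exactly Condition \ref{q21} of a valid pair; one then takes $Y_t$ to be the valid subset of $X$ containing the terminal endpoint $x_t$ (or the singleton Steiner endpoint), giving Condition \ref{q22}, and Condition \ref{q1} holds because $s_tx_t$ is a genuinely new edge. When $F_{t-1}$ is connected, condition 1 of the branching decomposition says $f_t$ joins the interiors of two distinct leaf blocks $B,B'$ of $F_t$ — hence also of $F_{t-1}$ once $f_t$ is added — which matches Conditions \ref{q31} and \ref{q32}/\ref{q33}; here $Y_t$ is chosen as the maximal intersection of $V(B')$ with the valid subset of $X$ containing $x_t$. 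The fact that every endpoint lies in a block \emph{interior} rather than at a cut-vertex — crucial for matching Conditions \ref{q31}–\ref{q33} — is precisely what the absence of linked sets and the criticality of Steiner edges in $\mathcal{N}_0$ buys us, and is already extracted inside the proof of Proposition \ref{anyBranch}.

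The main obstacle, and the reason Function \texttt{Order} exists, is that the valid-pair conditions are phrased in terms of the \emph{current} graph $F_{t-1}$ built by the algorithm, whereas the branching decomposition lives inside the fixed graph $N$; these can disagree in two ways once we pass the index $t'$ where the graph first becomes connected. First, two of the edges $f_i,f_j$ of $N$ may share a common terminal, so that a block the algorithm regards as internal is actually a leaf block of $N$ (the $f_5,f_6$ phenomenon in Figure \ref{figEgs19}); second, the relative order along a degree-two block path in which the terminal endpoints of the $f_i$ attach may differ from the order in which \texttt{BuildSES} happens to pick them, again changing which blocks are leaf blocks (the $f_4,f_{10}$ phenomenon). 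The resolution is that Function \texttt{Order}, invoked exactly once immediately after step $t'$, enumerates \emph{all} $O(1)$ relative orderings of terminal endpoints inside every degree-two block-path interior of $G_\mathrm{UN}$ and all $O(1)$ ways two Steiner edges can coincide at a terminal; one of the resulting graphs in $\Psi$ agrees with the leaf-block structure that $N$ induces, and from that graph onward the remaining edges $f_{t'+1},\dots,f_\rho$ can be added as valid pairs. I would therefore devote the bulk of the proof to (i) verifying the valid-pair conditions in the disconnected and connected regimes as above, and (ii) arguing that after \texttt{Order} the enumerated set $\Psi$ contains a graph whose leaf blocks, interiors, and degree-two block paths coincide with those induced by $N$, so that the induction continues to the terminal step $t=\rho$, at which point $(F_\rho)_\mathrm{BCF}$ is an isolated vertex (since $N$ is $2$-connected) and $\mathcal{Q}=\langle(s_1,Y_1),\dots,(s_\rho,Y_\rho)\rangle$ is added to $\Lambda_0$ with $N$ a representative of $G(\mathcal{Q})$.
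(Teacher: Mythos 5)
Your proposal follows the paper's own argument essentially step for step: take a branching decomposition of $N$ (Proposition \ref{anyBranch}), reverse it, check that the disconnected-phase edges correspond to valid pairs via conditions II.a--II.b, invoke Function \texttt{Order} at the transition index $t'$ to align the leaf-block structure with that of $N$, and then run an induction on the connected phase to show each subsequent $f_t$ is a valid pair. One minor slip: when $F_{t-1}$ is connected, condition 1 of the branching decomposition (read in reverse) places the endpoints of $f_t$ in interiors of distinct leaf blocks of $F_{t-1}$, not of $F_t$ (the parenthetical ``hence also of $F_{t-1}$ once $f_t$ is added'' is backwards); this is exactly what matches conditions III.a--III.c, which are also stated for the graph before the new edge is added. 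The paper makes the inductive invariant explicit via two properties, $P(t)$ (leaf-block correspondence up to valid subsets) and $D(t)$ (terminals along block paths between corresponding leaf blocks lie in the same valid subsets), and shows both are preserved by each edge addition; your sketch gestures at this (``leaf blocks, interiors, and degree-two block paths coincide'') but would need to spell out the $D(t)$-type ordering invariant to make the inductive step airtight, since a one-time fix by \texttt{Order} at $t'$ only propagates forward if the relative ordering of attachment points along each degree-two block path is preserved thereafter.
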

\begin{proof}
Let $\langle e_1,...,e_{p_N}\rangle$ be a reverse branching decomposition of an arbitrary $N\in \mathcal{N}_0$, which exists by Proposition \ref{anyBranch}. Let $t'$ be the smallest $t$ such that $N_t^-:=G_\mathrm{UN}\cup \{e_j:1\leq j\leq t\}$ is connected. Since the valid subsets of $X$ partition $X$, and Function \texttt{BuildSES} considers every valid pair, we may assume that the first $t'$ pairs of some $\mathcal{Q}\in \Lambda_0$ are a Steiner endpoint sequence for $N_{t'}^-$. Therefore, for $t\leq t'$, each $e_t=s_tx_t'$, for some $x_t'\in Y_t$. When $t>t'$, those valid subsets of $X$ which are available for forming valid pairs do not necessarily partition $X$. However, we demonstrate that, due to Function \texttt{Order}, some sequence of edge additions $f_{1},...,f_{p_N}$ during the execution of Function \texttt{BuildSES} has the following property at $t''=P_N$. Let Property ${P}(t'')$ be the property that for every $t\in \{t',...,t''\}$ there is a one-to-one correspondence between the leaf blocks of $F_t$ and $N_t^-$ such that every corresponding pair shares a subset of terminals from a valid subset of $X$. Since Function \texttt{BuildSES} considers all possible leaf blocks and all valid subsets that intersect each leaf-block, showing that Property ${P}(p_N)$ holds will prove the theorem. To show this we employ induction on $t$, with the base case $t=t'$.

We first show that Property $P(t')$ holds. Notice that every leaf-block ${B}$ of $F_{t'}$ has one of the following forms: ${B}$ is a leaf-block of $G_\mathrm{UN}$; ${B}$ is an isolated block of $G_\mathrm{UN}$ containing at least two vertices; or ${B}$ is a unique edge of $F_{t'}$ incident to an isolated vertex of $G_\mathrm{UN}$. In fact, a leaf block ${B}'$ of $G_\mathrm{UN}$ is a leaf-block of $F_{t'}$ if and only if no Steiner edges are incident to the interior of ${B}'$; and an isolated block $W$ of $G_\mathrm{UN}$, containing at least two vertices, is a leaf-block of $F_{t'}$ if and only if all Steiner edges incident to $W$ are incident to the same point. Therefore, since Function \texttt{Order} considers every possible ordering of terminal endpoints in all degree-two block path interiors, and considers all cases that result when any two Steiner edges are incident to a common terminal in $F_{t'}$, Function \texttt{BuildSES} constructs some $F_{t'}$ which has identical leaf-blocks to $N_{t'}^-$. Therefore Property $P(t')$ holds.

Let $P_F,P_N$ be any paths in such an $F_{t'}$ and in $N_{t'}^-$ respectively, connecting a point in a leaf-block $B$ to a point in a distinct leaf-block $B'$. Observe that for every terminal $x$ on $P_N$ there exists a terminal $y$ on $P_F$ such that $x$ and $y$ belong to the same valid subset of $X$ (and vice-versa); we say that Property $D(t)$ holds at $t=t'$.

Suppose that for some $t\geq t'$ Property $P(t)$ and Property $D(t)$ are satisfied. Observe that when introducing an edge between the interiors of two distinct leaf-blocks $B_0,B_0'$ of $F_t$, a new block $\widehat{B}$ is formed which contains all vertices of the blocks in the block path connecting $B_0$ and $B_0'$. Due to Function \texttt{Order} and by the discussion accompanying Figure \ref{figEgs19}, we may assume that block $\widehat{B}$ is a leaf-block of $F_{t+1}$ if and only if the corresponding block in $N_{t+1}^-$ is a leaf-block. Since Property $D(t)$ is true it follows that, if $B$ and $B'$ are corresponding leaf-blocks in $F_{t+1}$ and $N_{t+1}^-$ respectively, then every valid subset $X'$ that intersects $V(B)$ also intersects $B'$. Therefore Properties $P(t+1)$ and $D(t+1)$ are true. This proves the theorem.
\end{proof}

Finally we look at the complexity of Function \texttt{BuildSES}. The depth of the recursion tree in Function \texttt{BuildSES} is clearly at most $\Delta k$. Also, since it is assumed that $b(G_\mathrm{UN})\leq \Delta k$, every $F_t$ contains at most $O(1)$ valid pairs. Therefore the maximum degree of a node of the recursion tree is also of constant order.  Constructing a block-cut forest takes $O(n^2)$ time. All other nodes of the recursion tree, as well as Function \texttt{Order}, run in constant time. Therefore we have shown the following:

\begin{proposition}Function \normalfont{\texttt{BuildSES}} \textit{runs in $O(n^2)$-time}.
\end{proposition}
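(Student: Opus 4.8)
The plan is to establish the bound by treating the size of the recursion tree of Function \texttt{BuildSES} and the amount of work performed at each recursion node separately. First I would show that the recursion tree has only $O(1)$ nodes. Its depth is at most $\Delta k + 1$, since the parameter $t$ increases by exactly $1$ on each recursive call and the algorithm exits once $t = \Delta k$; as $k$ is constant and $\Delta \in \{5,7\}$ this is $O(1)$. For the branching factor at a node with current graph $F_t$, note that the \texttt{For} loop ranges over pairs $(F', (s',Y'))$ with $F' \in \Psi$ and $(s',Y')$ a valid pair for $F'$, and I would argue that there are only $O(1)$ of these. On one hand $|\Psi| = O(1)$, because either $\Psi = \{F_t\}$ or $\Psi$ is the output of Function \texttt{Order}, which enumerates the $O(1)$ relative orderings of the (at most $\Delta k$) terminal endpoints of Steiner edges inside the $O(1)$ degree-two block-path interiors of $G_\mathrm{UN}$, together with the $O(1)$ cases in which two Steiner edges share a terminal --- all $O(1)$ because $b(G_\mathrm{UN}) \le \Delta k$ and $k$ is constant.

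On the other hand, the number of valid pairs for any graph $F'$ occurring in the recursion is $O(1)$. A valid pair $(s,Y)$ has $s$ one of the at most $k = O(1)$ Steiner points, and $Y$ either a singleton containing a Steiner point or, by conditions \ref{q22} and \ref{q33}, a subset determined by one of the valid subsets of $X$ together with a leaf-block of $F'$. The collection $\mathcal{V}$ of valid subsets is unique and, because $b(G_\mathrm{UN}) \le \Delta k$, contains only $O(1)$ members (leaf-block interiors, degree-two block-path interiors, isolated blocks, and singletons from $V_\mathrm{CV}$). Moreover $F'$ itself has only $O(1)$ leaf-blocks, since it is obtained from $G_\mathrm{UN}$ by adding at most $\Delta k$ Steiner edges, $G_\mathrm{UN}$ has $O(1)$ leaf-blocks, isolated blocks, and isolated vertices, and each edge addition changes the number of leaf-blocks by $O(1)$. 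Hence there are $O(1)$ valid pairs, the branching factor $|\Psi|\cdot(\text{number of valid pairs})$ is $O(1)$, and a recursion tree of depth $O(1)$ and branching factor $O(1)$ has $O(1)$ nodes.

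It then remains to bound the work at a single node by $O(n^2)$. The dominant cost is the construction of the block-cut forest $(F_t)_\mathrm{BCF}$: since $F_t$ contains $G_\mathrm{UN}$, which is a subgraph of the complete graph on $X$ and so has at most $\binom{n}{2} = O(n^2)$ edges, plus at most $\Delta k$ Steiner edges, Theorem \ref{bcftime} gives $O(n^2)$. Every remaining operation at a node --- choosing a random element of $Y$, testing connectedness of $F_t$, testing for the existence of valid pairs, listing the $O(1)$ valid pairs from the block-cut forest, and running Function \texttt{Order} --- manipulates graphs of size $O(n^2)$ and enumerates $O(1)$-sized families, so each costs at most $O(n^2)$. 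Multiplying the $O(1)$ recursion nodes by the $O(n^2)$ cost per node yields the claimed $O(n^2)$ bound. I expect the only delicate point to be the verification that the branching factor remains $O(1)$ throughout the recursion, i.e.\ that every intermediate $F'$ keeps $O(1)$ leaf-blocks (hence $O(1)$ valid pairs) and that Function \texttt{Order} emits only $O(1)$ graphs; each of these reduces to the standing hypothesis $b(G_\mathrm{UN}) \le \Delta k$ together with the constancy of $k$ and $\Delta$, after which the estimate is immediate.
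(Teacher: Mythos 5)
Your proof is correct and follows essentially the same two-part argument as the paper: a recursion tree of $O(1)$ size (constant depth $\Delta k$, constant branching because $b(G_\mathrm{UN})\leq\Delta k$ keeps the valid pairs and \texttt{Order} outputs to $O(1)$) times $O(n^2)$ per-node work dominated by block-cut-forest construction via Theorem \ref{bcftime}. The extra detail you supply — tracking why each intermediate $F'$ retains $O(1)$ leaf-blocks under edge additions — is a point the paper asserts implicitly, and is a worthwhile clarification rather than a deviation in method.
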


For the rest of this paper we assume that all Steiner edges and chord paths of $M_\mathcal{Q}$ for any $\mathcal{Q}\in\Lambda_0$ are critical and that $M_\mathcal{Q}$ contains no linked sets, since, within time $O(n^2)$, any $\mathcal{Q}$ such that $M_\mathcal{Q}$ has one of these components can be removed from $\Lambda_0$.

\subsubsection{Ensuring $2$-connectivity}\label{makeCon}
%note that 2CONNECT must be done repeatedly, ie the points found for Gopt are not constant
Not every representative of $G(\mathcal{Q})$, where $\mathcal{Q}\in\Lambda_0$, is necessarily $2$-connected. Therefore, to satisfy Property (D), we present in this section Function \texttt{2Connect}, which constructs a $2$-connected representative $G_\mathrm{OPT}^2(\mathcal{Q})$ of $G(\mathcal{Q})$ such that $\ell_\mathrm{max}(G_\mathrm{OPT}^2(\mathcal{Q}))=\ell_\mathrm{max}(G_\mathrm{OPT}(\mathcal{Q}))$. We will need the following lemma.

\begin{figure}[htb]
  \begin{center}
    \includegraphics[scale=0.65]{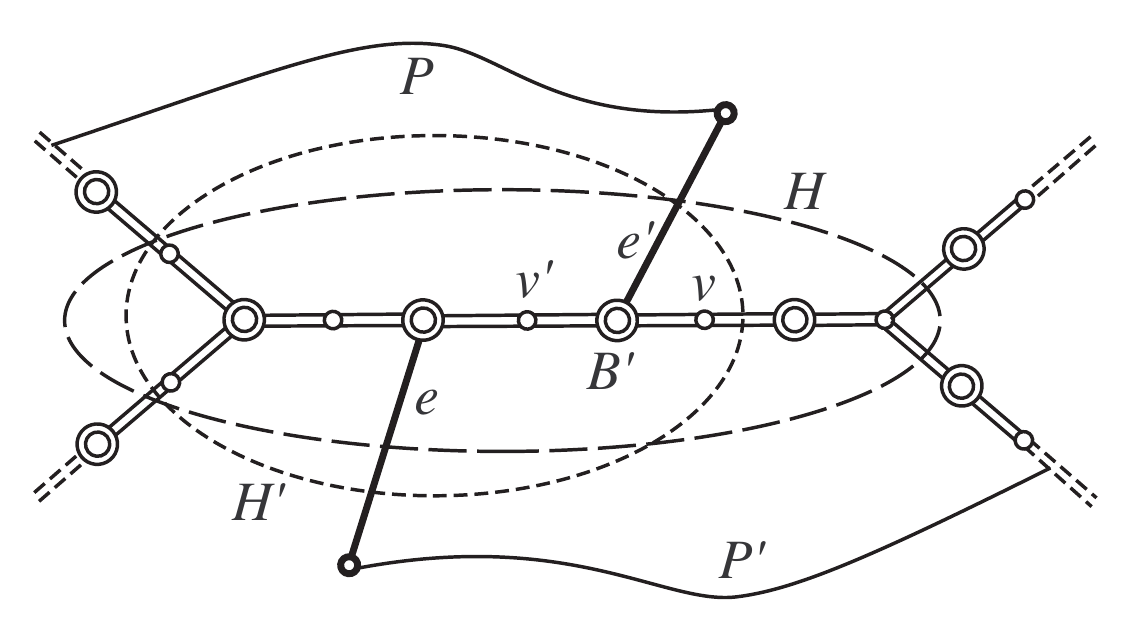}\\
  \end{center}
  \caption{Proof of Lemma \ref{bcfIn}. The curved lines represent paths in $N$. The long and short dashed ellipses encircle $H$ and $H'$ respectively}
  \label{figEgs14}
\end{figure}

\begin{lemma}\label{bcfIn}Let $G$ be a connected subgraph of an augmented network $N$, where $N$ contains no linked sets and $G$ contains all edges of $G_\mathrm{UN}$. Let $B$ be a leaf-block of $G$, let $x\in\mathrm{int}(B)$ be a terminal, and let $X'$ be the valid subset of $X$ containing $x$. Then $X'\subseteq B$.
\end{lemma}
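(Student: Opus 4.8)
The plan is to argue by contradiction and to treat the four possible forms of the valid subset $X'$ separately. Three of them are immediate. If $X'=\{x\}$ with $x\in V_\mathrm{CV}$, then $X'\subseteq V(B)$ since $x\in V(B)$. If $X'$ is the interior of a leaf-block of $G_\mathrm{UN}$ or the vertex set of an isolated block of $G_\mathrm{UN}$, then $X'$ is either a single vertex (again trivial) or is contained in $V(W)$ for a $2$-connected subgraph $W$ of $G_\mathrm{UN}$ with at least two vertices. Since $G_\mathrm{UN}\subseteq G$, $W$ is a $2$-connected subgraph of $G$, so all edges of $W$ lie in a single block of $G$; that block contains $x$, which, being interior to $B$, lies in a unique block of $G$, namely $B$; hence $V(W)\subseteq V(B)$ and $X'\subseteq V(B)$.

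It remains to treat the case $X'=\mathrm{int}(H)$ for a degree-two block path $H$ of $G_\mathrm{UN}$ with blocks $B_1,\dots,B_{p_H}$. First note that the subgraph of $G_\mathrm{UN}$ induced by $\mathrm{int}(H)$ is connected: it is the chain of the interior blocks of $H$ with the two outermost chain cut-vertices deleted. Suppose, for contradiction, that $X'\not\subseteq V(B)$. As $x\in X'\cap V(B)$ and $\mathrm{int}(H)$ is connected, there is an edge $vw$ of $G_\mathrm{UN}$ with $v\in X'\cap V(B)$ and $w\in X'\setminus V(B)$. Since $B$ is a leaf-block of $G$ with cut-vertex $z$, and $vw\in E(G)$ has the endpoint $w$ outside $V(B)$, the endpoint $v$ cannot be interior to $B$ (this would force $v$ to lie in a block of $G$ other than $B$, namely the one containing $vw$, contradicting that an interior vertex of a block lies in no other block); hence $v=z$, and so $z\in X'=\mathrm{int}(H)$. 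A short argument now shows that $z$ cannot lie in the interior of a block of $H$: otherwise every $G_\mathrm{UN}$-edge at $z$ would lie in that block $B_m$, so $B_m\not\subseteq V(B)$, yet tracing the chain from $B_m$ towards a block that meets $\mathrm{int}(B)$ forces, block by block (each consecutive pair sharing a chain cut-vertex different from $z$, together with the leaf-block argument above), that $B_m\subseteq V(B)$ — a contradiction. Thus $z$ is a chain cut-vertex of $H$, and it splits $\mathrm{int}(H)$ into two block paths of $G_\mathrm{UN}$: one, say $H^+$, containing $x$ and contained in $V(B)$, and the other, $H^-$, containing $w$ and meeting $V(B)$ only in $z$.

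The contradiction now comes from the $2$-connectivity of $N$ together with the hypotheses that $N$ has no linked set and that all its Steiner edges and chord paths are critical. Since $B$ is a leaf-block of $G$ and $N\supseteq G$ is $2$-connected, $N-z$ is connected, so there is a path in $N-z$ from $\mathrm{int}(B)$ to $V(N)\setminus V(B)$; its first edge leaving $V(B)$ is a Steiner edge of $N$, not belonging to $G$, with one endpoint in $\mathrm{int}(B)$. Since $H^+$ is a degree-two block path of $G_\mathrm{UN}$ sitting inside the $2$-connected block $B$ of $G$, its two ends are joined inside $B$ by Steiner edges of $G$ (cf.\ Lemma \ref{joinB}); combining such a Steiner edge with the crossing Steiner edge just found, with the chain $H^-$, and with the fact that $z$ is the only vertex through which $H^+$ and $H^-$ meet, one produces a path-forming pair $\{g_1,g_2\}$ of external Steiner edges of $N$ whose removal leaves a block path having the Steiner endpoints of $g_1$ and $g_2$ in the interiors of its two leaf-blocks — that is, a linked set of $N$, contradicting the hypothesis. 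The main obstacle is exactly this last step: one must locate the two edges of the linked set among the Steiner edges of $G$ inside $B$ and the crossing Steiner edges of $N$, and, above all, verify the path-forming condition (that $N$ minus these two edges is a single block path); here the degree-two nature of $H$ and the leaf-block structure of $B$ are precisely what keeps the finitely many configurations under control.
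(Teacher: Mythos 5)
Your proposal follows the same general strategy as the paper's proof. The three easy cases (singleton from $V_\mathrm{CV}$, interior of a leaf-block of $G_\mathrm{UN}$, isolated block) are handled identically: the relevant $2$-connected subgraph of $G_\mathrm{UN}$ lies entirely in one block of $G$, which must be $B$ since $x$ lies only in $B$. And the hard case (degree-two block path $H$) is also handled by the same scheme: suppose $\mathrm{int}(H)\nsubseteq V(B)$, locate the unique cut-vertex of $G$ in $B$ (your $z$, the paper's $v$) as the boundary vertex of $H$, and derive a contradiction by producing a linked set in $N$. Your middle section is in fact somewhat more careful than the paper's, since you actually argue that $z$ must be a chain cut-vertex of $H$ rather than asserting it, and you make the split of $\mathrm{int}(H)$ into $H^+$ and $H^-$ explicit.

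The problem is the decisive final step, which you leave as a sketch and yourself flag as ``the main obstacle.'' The paper does the actual work here: it picks $v'$, the cut-vertex of $G_\mathrm{UN}$ in the block $B'$ of $H$ shared with $v$ and lying in $\mathrm{int}(B)$; then (i) by $2$-connectedness of $B$ there is a path $P$ in $B$ from $v'$ to a vertex of $B'\setminus\{v'\}$ avoiding $v$, which is forced to use edges not in $G_\mathrm{UN}$, and one takes its external Steiner end-edge $e'$ incident to $B'\setminus\{v'\}$; (ii) since $v$ is not a cut-vertex of $N$, there is a path $P'$ in $N$ from a terminal outside $B$ to a terminal in $H'$ avoiding $v$, with Steiner end-edge $e$; and then $\{e,e'\}$ is checked to be a linked set. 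Your proposal gestures at edges of the same character (``a Steiner edge of $G$ inside $B$ bridging $H^+$,'' ``the crossing Steiner edge of $N$'') but never names the pair $\{g_1,g_2\}$, never verifies the path-forming property ($N$ minus the pair is a block path), and never verifies that the Steiner endpoints lie in the interiors of the resulting leaf-blocks. Also, your appeal to Lemma~\ref{joinB} to conclude that the ends of $H^+$ ``are joined inside $B$ by Steiner edges of $G$'' is really a converse of that lemma and needs its own short argument. Those verifications are precisely the nontrivial content of the degree-two block path case, so as written the proof has a genuine gap there, even though the overall route is the right one.
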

\begin{proof}
Recall that $X'$ has one of four possible forms. Clearly the lemma is true if $X'$ is a singleton containing a vertex of $V_\mathrm{CV}$. So suppose next that $X'$ is the interior of a leaf-block or isolated block $\widehat{B}$ of $G_\mathrm{UN}$. Since blocks are maximal $2$-connected subgraphs, $\widehat{B}$ must be contained in a block $\widehat{B}'$ of $G$. Since $x$ is in the interior of $B$, no other blocks of $G$ contain $x$. Therefore $B=\widehat{B}'$.

We only need to consider one more case. Suppose that $X'$ is the interior of a degree-two block path $H$ of $G_\mathrm{UN}$ (see Figure \ref{figEgs14}). We assume that $H$ contains at least two blocks that are of degree two in $(G_\mathrm{UN})_\mathrm{BCF}$, for otherwise the result follows similarly to the case when $X'$ is the interior of a leaf-block of $G_\mathrm{UN}$. Let $H'$ be the largest subgraph of $H$ such that $H'\subseteq B$, and suppose that $\mathrm{int}(H)\nsubseteq H'$. Let $v$ be a cut vertex of $H$ such that $v$ belongs to at least two distinct blocks of $G$, exactly one of which has an interior intersecting $H'$. Then $v$ is a cut-vertex of $G$. Since $B$ is a leaf-block, $v$ is unique, and $H'$ includes all vertices of $\mathrm{H}$ in the component of $G-v$ containing $H'$.

Let $v'$ be a cut vertex of $G_\mathrm{UN}$ contained in the same block $B'$ of $H$ as $v$. Then, since $v'$ is contained in $\mathrm{int}(B)$, there exists a path $P$ in $B$ connecting $v'$ and a vertex in $B'-v'$, such that $P$ contains edges not in $G_\mathrm{UN}$. Specifically, an end-edge of $P$, say $e'$, which is an external Steiner edge, is incident to a vertex of $B'-v'$. Similarly, since $v$ is not a cut-vertex of $N$, there is a path $P'$ in $N$ with a Steiner end-edge $e$ connecting a terminal of $G-B$ to a terminal of $H'$. But then $\{e,e'\}$ is a linked set in $N$, which contradicts the fact that $N\in \mathcal{N}_0$.
\end{proof}

Now suppose that some representative of $G(\mathcal{Q})$, where $\mathcal{Q}\in\Lambda_0$ and $\mathcal{Q}=\langle(s_1,Y_1),...,(s_\rho,Y_\rho)\rangle$, is not $2$-connected. Note that all representatives of $G(\mathcal{Q})$ are connected, since the terminal endpoint of any given external Steiner edge lies in a fixed component of $G_\mathrm{UN}$. Let the Steiner edges of $M_\mathcal{Q}$ be $f_i=s_iu_i$, for $1\leq i\leq \rho$, where $u_i\in\{s_i',x_i\}$. Let $$\mathcal{Q}_b=\langle (s_1,Y_1),...,(s_b,Y_b),(s_{b+1},\{u_{b+1}\}),...(s_{\rho},\{u_{\rho}\})\rangle,$$ where $b\geq 0$ is the largest integer such that every representative of $G(\mathcal{Q}_b)$ is $2$-connected. Observe that the unique representative of $G(\mathcal{Q}_0)$ is $M_\mathcal{Q}$.

Let $N_{\mathcal{Q}_{b+1}}$ be a representative of $G(\mathcal{Q}_{b+1})$ that is not $2$-connected, where $E_\mathrm{SE}(N_{\mathcal{Q}_{b+1}})=\{e_1,...,e_\rho\}$. Then $Y_{b+1}$ contains at least two terminals, for otherwise $\mathcal{Q}_b=\mathcal{Q}_{b+1}$. By Lemma \ref{bcfIn} it follows that the valid subset $Y_{b+1}$ is contained in a leaf block of $N_{\mathcal{Q}_{b+1}}-e_{b+1}$, say $B_1$. Since $N_{\mathcal{Q}_{b+1}}$ is not $2$-connected, this means that, in $N_{\mathcal{Q}_{b+1}}$, the edge $e_{b+1}$ is incident to the cut-vertex, say $v$, of $N_{\mathcal{Q}_{b+1}}-e_{b+1}$ contained in $B_1$. Note that $v$ cannot be a Steiner point nor a vertex of $V_\mathrm{CV}$. If $v$ is incident to terminal edges in both blocks of $N_{\mathcal{Q}_{b+1}}-e_{b+1}$ containing $v$, then $v$ is also a cut vertex of $G_\mathrm{UN}$. This contradicts Lemma \ref{bcfIn}. Therefore, the set of Steiner edges incident to $v$ comprise an edge-cut of $N_{\mathcal{Q}_{b+1}}$. Let $\Gamma'=\{e_{j}:j\in J\}$, for some index set $J$, be a minimal subset of this edge-cut. Since, in the candidate type $G(\mathcal{Q})$, external Steiner edge $h_i$ is incident to a vertex in a fixed component of $G_\mathrm{UN}$, the set $\Gamma =\{h_j:j\in J\}$ of labelled edges must be an edge cut in all the representatives of $G(\mathcal{Q})$. We therefore refer to $\Gamma$ as an \textit{edge cut of} $G(\mathcal{Q})$.

A minimal edge-cut $\Gamma$ of $G(\mathcal{Q})$, consisting of external Steiner edges only, is referred to as a \textit{potential cut} for $G(\mathcal{Q})$ if $\displaystyle\bigcap_{j\in J}Y_{j}\neq \emptyset$.

The next lemma now follows from the above discussion.

\begin{lemma}\label{notCon}If some representative of $G(\mathcal{Q})$ is not $2$-connected then there exists a potential cut for $G(\mathcal{Q})$.
\end{lemma}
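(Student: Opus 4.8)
The plan is to show that if some representative $N_{\mathcal{Q}}$ of $G(\mathcal{Q})$ fails to be $2$-connected, then one can extract a potential cut by tracking where $2$-connectivity first breaks as we pass from $M_{\mathcal{Q}}$ to $N_{\mathcal{Q}}$. The key device is the sequence $\mathcal{Q}_b$ defined in the discussion above: since the unique representative of $G(\mathcal{Q}_0)$ is $M_{\mathcal{Q}}$, which is $2$-connected, and since $\mathcal{Q}_{\rho}=\mathcal{Q}$ has a non-$2$-connected representative, there is a well-defined largest $b\geq 0$ such that every representative of $G(\mathcal{Q}_b)$ is $2$-connected. Fixing a non-$2$-connected representative $N_{\mathcal{Q}_{b+1}}$ of $G(\mathcal{Q}_{b+1})$, I would argue that $b+1$ is exactly the index where things go wrong, so that $Y_{b+1}$ must contain at least two terminals (otherwise $\mathcal{Q}_{b}=\mathcal{Q}_{b+1}$).

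Next I would localise the cut-vertex. Apply Lemma \ref{bcfIn} with $G=N_{\mathcal{Q}_{b+1}}-e_{b+1}$ (which is connected and contains all edges of $G_\mathrm{UN}$) and with $x$ a terminal of $Y_{b+1}$: the valid subset $Y_{b+1}$ lies in a single leaf-block $B_1$ of $N_{\mathcal{Q}_{b+1}}-e_{b+1}$. Since $N_{\mathcal{Q}_{b+1}}$ is not $2$-connected while $N_{\mathcal{Q}_{b+1}}-e_{b+1}$ is connected, the only candidate cut-vertex is the unique cut-vertex $v$ of $N_{\mathcal{Q}_{b+1}}-e_{b+1}$ lying in $B_1$, and $e_{b+1}$ must be incident to $v$. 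A short case analysis rules out $v$ being a Steiner point or a vertex of $V_\mathrm{CV}$, and rules out $v$ being incident to terminal edges in two distinct blocks of $N_{\mathcal{Q}_{b+1}}-e_{b+1}$ (that would make $v$ a cut-vertex of $G_\mathrm{UN}$, contradicting Lemma \ref{bcfIn} since $v\in B_1$ and $Y_{b+1}\subseteq B_1$). Hence the Steiner edges incident to $v$ form an edge-cut of $N_{\mathcal{Q}_{b+1}}$; take a minimal subset $\Gamma'=\{e_j:j\in J\}$.

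Finally I would transfer this edge-cut to the candidate-type level. Because each external Steiner edge $h_j$ of $G(\mathcal{Q})$ has its terminal endpoint confined to a fixed component of $G_\mathrm{UN}$, the labelled set $\Gamma=\{h_j:j\in J\}$ is an edge-cut in every representative of $G(\mathcal{Q})$; by minimality of $\Gamma'$ it is a minimal such edge-cut, consisting of external Steiner edges only. It remains to verify the intersection condition $\bigcap_{j\in J}Y_j\neq\emptyset$: each $e_j$ ($j\in J$) is incident to $v$, so $v\in Y_j$ for every $j\in J$ (the endpoint realising $e_j$ in $N_{\mathcal{Q}_{b+1}}$ lies in $Y_j$ and equals $v$), whence $v\in\bigcap_{j\in J}Y_j$. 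Thus $\Gamma$ is a potential cut for $G(\mathcal{Q})$.

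The main obstacle I anticipate is the case analysis pinning down $v$: one must be careful that $v$ is genuinely a terminal not in $V_\mathrm{CV}$ and that all the blocks of $N_{\mathcal{Q}_{b+1}}-e_{b+1}$ meeting $v$, other than $B_1$, are attached to $v$ only through Steiner edges — this is exactly where the no-linked-sets hypothesis (via Lemma \ref{bcfIn}) does the work, and getting the bookkeeping right between $N_{\mathcal{Q}_{b+1}}$, $N_{\mathcal{Q}_{b+1}}-e_{b+1}$, $G_\mathrm{UN}$, and the valid subset $Y_{b+1}$ is the delicate part. The rest is essentially a direct reading-off of the definitions of edge-cut of $G(\mathcal{Q})$ and of potential cut.
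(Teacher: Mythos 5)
Your proposal follows the paper's own argument step for step: the same device of the interpolating sequences $\mathcal{Q}_b$ to locate the first index where $2$-connectivity fails, the same use of Lemma \ref{bcfIn} to pin $Y_{b+1}$ inside a leaf block, the same case analysis forcing $v$ to be a terminal outside $V_\mathrm{CV}$ with all cut edges being Steiner edges incident to $v$, and the same transfer of the minimal edge-cut to the labelled edges of $G(\mathcal{Q})$. You additionally spell out the intersection condition $\bigcap_{j\in J}Y_j\neq\emptyset$, which the paper leaves implicit, so the proposal is correct and matches the paper's route.
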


\begin{lemma}\label{least2}Let $T$ be the Steiner topology of $G(\mathcal{Q})$. At least two components of $T$ contribute edges to any potential cut $\Gamma$ for $G(\mathcal{Q})$.
\end{lemma}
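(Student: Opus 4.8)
The plan is to argue by contradiction: suppose that all edges of a potential cut $\Gamma$ for $G(\mathcal{Q})$ come from a single component $T_0$ of the Steiner topology $T$. Recall that $\Gamma$ arises from a minimal edge-cut $\Gamma'=\{e_j:j\in J\}$ of some non-2-connected representative $N_{\mathcal{Q}_{b+1}}$, where all edges of $\Gamma'$ are incident to a common non-Steiner cut-vertex $v$ contained in a leaf-block $B_1$ of $N_{\mathcal{Q}_{b+1}}-e_{b+1}$. First I would translate the component hypothesis into structural information about how the edges of $\Gamma'$ attach to the Steiner side of $N_{\mathcal{Q}_{b+1}}$: since the Steiner edges indexed by $J$ all lie in one connected piece $T_0$ of the Steiner forest, their Steiner endpoints all lie in one connected subgraph of $N_{\mathcal{Q}_{b+1}}$ consisting of Steiner points (plus the terminal endpoints of those edges).

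Next I would exploit the fact that $\Gamma'$ is an edge-cut separating $N_{\mathcal{Q}_{b+1}}$ into (at least) two pieces, one of which — call it $A$ — contains the component of $N_{\mathcal{Q}_{b+1}}-v$ that lies "below" $v$ inside $B_1$, i.e. contains $Y_{b+1}$ (or rather the relevant terminals of $B_1$). The key observation is that if every $e_j$ has its Steiner endpoint in the single connected piece $T_0$, then removing $\Gamma'$ disconnects that entire Steiner component $T_0$ (together with whatever hangs off it on the non-$A$ side) from $A$ in one stroke; but then the underlying-network part of $N_{\mathcal{Q}_{b+1}}$ inside $A$ must be joined to the rest of the graph \emph{only} through terminal edges of $G_\mathrm{UN}$, since no Steiner edge crosses from $A$ into $T_0$ except those in $\Gamma'$. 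I would then trace through the block-cut structure: $v$ is incident to terminal edges on only one side (else $v$ would be a cut-vertex of $G_\mathrm{UN}$, contradicting Lemma \ref{bcfIn} exactly as in the paragraph preceding the lemma), so on the $A$-side $v$ is incident only to terminal edges of $G_\mathrm{UN}$ and the edges of $\Gamma'$. Combining these, $A$ would have to be a connected subgraph of $G_\mathrm{UN}$ all of whose connections to the rest of $N_{\mathcal{Q}_{b+1}}$ route through $\Gamma'\subseteq T_0$; this lets me construct a linked set in $N_{\mathcal{Q}_{b+1}}$ — one external Steiner edge from $\Gamma'$ reaching into $A$ and another from $\Gamma'$ reaching to the far side, with $N_{\mathcal{Q}_{b+1}}$ minus these two edges being a block path — contradicting $N_{\mathcal{Q}_{b+1}}\in\mathcal{N}_0$ having no linked sets.

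Alternatively, and perhaps more cleanly, I would argue that if only one component $T_0$ of $T$ contributes to $\Gamma$, then $|\Gamma|\geq 2$ forces $T_0$ to have at least two edges meeting the cut; but $T$ is acyclic (Theorem \ref{Tcord} / the discussion after it), so $T_0$ is a tree, and two of its edges being simultaneously critical-as-part-of-one-edge-cut means they are "in series" relative to the two sides $A$ and its complement. Then the two end-Steiner-edges of the unique path in $T_0$ joining the $A$-side to the complement-side are external Steiner edges whose removal leaves a block path, i.e. they form a linked set — again contradicting $\mathcal{N}_0$-membership. The main obstacle I anticipate is the bookkeeping needed to verify that $N_{\mathcal{Q}_{b+1}}$ minus the two chosen external Steiner edges is genuinely a block path (so that the pair really is a linked set in the precise sense of the definition), rather than merely disconnected or a more complicated block structure; handling this will require using both the acyclicity of $T$ and the leaf-block description of $v$ and $B_1$ carefully, and possibly choosing the two edges to be the extremal ones of a minimal sub-cut so that everything between them collapses into a single block path.
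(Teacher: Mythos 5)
Your proposal captures the right opening move (assume for contradiction that a single Steiner component $T'$ contributes all edges of $\Gamma$) and is correct that the contradiction will ultimately come from the restrictions placed on members of $\mathcal{N}_0$, but there are real gaps that the ``main obstacle'' you flag only partially names.

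First, you look for a linked set in $N_{\mathcal{Q}_{b+1}}$, but this is the wrong graph: $N_{\mathcal{Q}_{b+1}}$ is precisely a representative that fails to be $2$-connected, so it is not an augmented network at all and is certainly not in $\mathcal{N}_0$; the ``no linked sets, all Steiner edges and chord paths critical'' hypotheses hold for $M_\mathcal{Q}$, and the paper's proof therefore works entirely inside $M_\mathcal{Q}$.

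Second, and more substantively, your ``cleaner'' route — that the two end external Steiner edges of $T'$ form a linked set because their removal leaves a block path — only has a chance when $T'$ is itself a path with exactly two external edges, i.e.\ when $|\Gamma|=2$. When $|\Gamma|\geq 3$, removing just two of those external edges leaves $T'$, and hence $M_\mathcal{Q}$, still $2$-connected, so the deleted pair is not path-forming and cannot be a linked set; the paper disposes of this case by exhibiting a chord path of Steiner points (contradicting Lemma~\ref{critStuff}), a genuinely different mechanism. Even in the $|\Gamma|=2$ subcase the paper does not invoke linked sets directly but instead shows $x$ and $y$ would lie in distinct leaf blocks of the block path $M_\mathcal{Q}-T'$ while belonging to the same valid subset, contradicting Lemma~\ref{bcfIn}.

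Third, the proposal skips the paper's central structural device: the intersection $X'=\bigcap_{j\in J} Y_j$ is nonempty by the definition of a potential cut, is itself a valid subset since distinct valid subsets are disjoint, and the proof then splits on the four possible forms of a valid subset. Two of those forms resolve immediately ($X'$ a singleton in $V_\mathrm{CV}$ gives a cut-vertex of $M_\mathcal{Q}$; $X'$ inside a leaf or isolated block of $G_\mathrm{UN}$ makes $T'$ a removable, non-critical chord path), and only the degree-two block path case requires the more delicate argument. Without this case analysis the ``single block'' case in particular has no analogue in your proposal, since it produces a non-critical chord path rather than a linked set. I'd recommend reworking the proof around the valid-subset case split in $M_\mathcal{Q}$ before attempting to press the linked-set idea.
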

\begin{proof}
Suppose to the contrary that all edges of $\Gamma$ belong to the same component, say $T'$, of $T$. Since $M_\mathcal{Q}-T'$ is connected, all external edges of $T'$ lie in $\Gamma$. Let $X'=\displaystyle\bigcap_{j\in J}Y_{j}\neq \emptyset$. Any pair of distinct valid subsets of $X$ are disjoint, therefore $X'$ is a valid subset of $X$. If $X'$ is a singleton then $X'$ is a cut-vertex of $M_\mathcal{Q}$, which is a contradiction. If $X'$ is contained in a block of $G_\mathrm{UN}$ then $T'$ can be removed from $M_\mathcal{Q}$ without losing $2$-connectivity (in other words, in this case there exist non-critical edges or non-critical chord-paths in $M_\mathcal{Q}$). Therefore, suppose that $X'$ is the interior of a degree-two block path $H$ of $G_\mathrm{UN}$, and that $H$ contains at least two blocks of degree two in $H_\mathrm{BCF}$. Let $x,y$ be distinct terminal endpoints of edges of $\Gamma$, contained in such degree-two blocks $B,B'$ of $H$ respectively, such that the number of blocks in the block path between $B$ and $B'$ is a maximum.

Suppose first that $\Gamma$ contains at least three elements, and let $z$ be a distinct (from $x,y$) terminal endpoint of an edge in $\Gamma$. Let $P$ be the path in $T'$ connecting $x$ and $y$, and let $P'$ be the shortest path in $T'$ connecting $z$ to a Steiner point on $P$. Since there exists a path $P_0$ in $G_\mathrm{UN}$ connecting $x$ and $y$, such that $P_0$ contains $z$, path $P'$ is a chord of the cycle $P\cup P_0$. This contradicts Lemma \ref{critStuff}. Finally, suppose that $\Gamma$ only contains two edges. Then $T'$ is a path with interior vertices consisting of degree-two Steiner points only. But then $B$ and $B'$ must lie in distinct leaf-blocks of the block path $M_\mathcal{Q}-T'$, for otherwise $T'$ would be a degree-two chord path. This contradicts Lemma \ref{bcfIn}.
\end{proof}

\begin{figure}[htb]
  \begin{center}
    \includegraphics[scale=0.55]{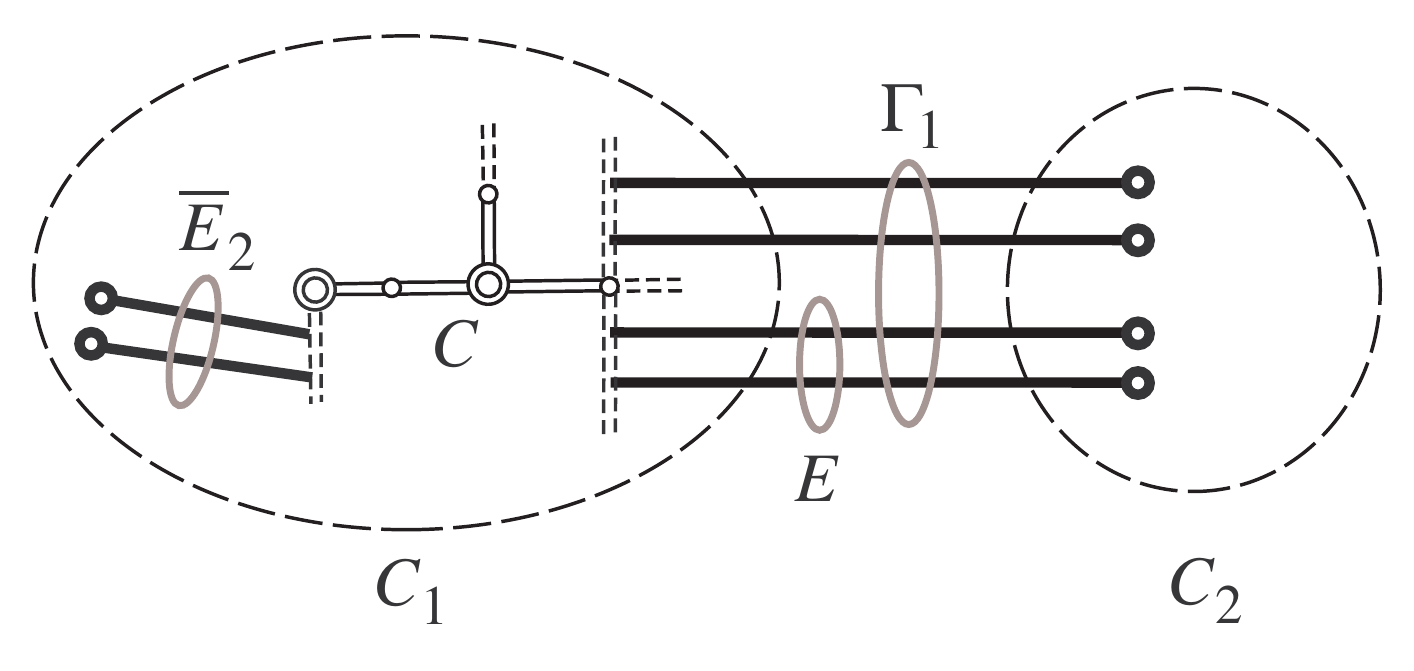}\\
  \end{center}
  \caption{Proof of Lemma \ref{disEd}. The Steiner edges in $\Gamma_1$ and $\Gamma_2$ are depicted as bold lines}
  \label{figEgs17}
\end{figure}

As a consequence of the next lemma the set of potential cuts for $G(\mathcal{Q})$ is unique and every pair of distinct potential cuts for $G(\mathcal{Q})$ is disjoint.

\begin{lemma}\label{disEd}Let $N$ be an augmented network. Suppose that $\Gamma_1,\Gamma_2$ are distinct minimal edge-cuts of $N$, with both sets containing only external Steiner edges. If every edge in $\Gamma_1\cup\Gamma_2$ is incident to the same component of $G_\mathrm{UN}$ then $\Gamma_1$ and $\Gamma_2$ are disjoint.
\end{lemma}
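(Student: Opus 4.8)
\textbf{Proof proposal for Lemma~\ref{disEd}.}

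The plan is to argue by contradiction. Suppose $\Gamma_1$ and $\Gamma_2$ are distinct minimal edge-cuts of $N$, both consisting only of external Steiner edges incident to a single component $C$ of $G_\mathrm{UN}$, and suppose that they share a common edge $e$. Let $N - \Gamma_i = N_i^{(1)} \cup N_i^{(2)}$ be the partition of $N$ into the two sides determined by the minimal cut $\Gamma_i$; since $G_\mathrm{UN}$ is a subgraph of $N$ and $\Gamma_i$ contains no terminal edges, the component $C$ of $G_\mathrm{UN}$ lies entirely on one side, say $N_i^{(1)}$, while $N_i^{(2)}$ is (up to the structure of a block path) a collection of Steiner edges and the pieces they hang off of. First I would recall from the preceding discussion (the paragraph leading to Lemma~\ref{notCon}) that each such minimal edge-cut $\Gamma_i$ arises as the set of Steiner edges incident to a single non-cut vertex $v_i$ of $G_\mathrm{UN}$ lying in $\mathrm{int}(B_i)$ for a leaf-block $B_i$ of the relevant block path; equivalently, $N - \Gamma_i$ has $v_i$ (or rather the valid subset containing it) separated out. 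The key structural fact is that the ``small side'' $N_i^{(2)}$ of the cut is the union of the external Steiner edges in $\Gamma_i$ together with possibly an attached block path, but its terminal vertices all lie in a common valid subset $X_i' = \bigcap_{h_j \in \Gamma_i} Y_j$ of $X$, which is nonempty (this is essentially Lemma~\ref{bcfIn} applied to the relevant subgraph).

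Next I would exploit the assumption $e \in \Gamma_1 \cap \Gamma_2$. The edge $e$ is external, so it is incident to a terminal $x$, and $x$ must lie in both $X_1'$ and $X_2'$. Since any two distinct valid subsets of $X$ are disjoint (the valid subsets partition $X$, by construction in Section~\ref{nolink}), we conclude $X_1' = X_2' =: X'$. So both cuts separate off exactly the same valid subset $X'$ of $X$, which sits in the interior of a common leaf-block or degree-two block path of $G_\mathrm{UN}$; in particular the terminal endpoints of all edges of $\Gamma_1$ and of all edges of $\Gamma_2$ lie in $X'$. The plan is then to look at the ``other'' endpoints: each $h_j \in \Gamma_i$ has its Steiner endpoint on the $C$-side, and removing $\Gamma_i$ detaches precisely the part of $N$ reachable from $X'$ without passing through $C$. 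I would argue that this detached part is determined by $X'$ alone — it is the union of all Steiner edges of $N$ with a terminal endpoint in $X'$, plus whatever block-path structure of $G_\mathrm{UN}$ hangs off those — hence independent of $i$. Therefore $\Gamma_1$ and $\Gamma_2$ are both equal to the set of all Steiner edges crossing between this uniquely-determined detached part and the rest of $N$; but a minimal edge-cut separating a fixed pair of sides is unique, so $\Gamma_1 = \Gamma_2$, contradicting their distinctness.

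To make the middle step rigorous I would use connectivity of $N$ together with the critical/no-linked-set hypotheses already standing for $N$ (these have been in force since the end of Section~\ref{nolink} and are recalled implicitly by the phrase ``augmented network''). Concretely: let $D$ be the vertex set of the component of $N - \Gamma_1$ not containing $C$. Because $\Gamma_1$ is minimal, every edge of $\Gamma_1$ has exactly one endpoint in $D$; because $\Gamma_1$ consists of external Steiner edges, the endpoints in $D$ that are terminals all lie in $X'$, and in fact $D \cap X = X'$ by Lemma~\ref{bcfIn} (the valid subset cannot be split by a leaf-block boundary in the no-linked-set setting). The same description with $\Gamma_2$ yields the component $D'$ with $D' \cap X = X'$. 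Now $D$ and $D'$ are both connected subgraphs of $N$ whose only terminals form the set $X'$ and which are joined to the rest of $N$ only by external Steiner edges; using that $N$ has no linked sets (so one cannot have two ``long-range'' external Steiner edges spanning the same block path from distinct interiors) one checks that $D = D'$, forcing $\Gamma_1 = \Gamma_2$.

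I expect the main obstacle to be exactly this last identification $D = D'$: a priori two different vertices $v_1, v_2$ of $G_\mathrm{UN}$ inside the same degree-two block path could each serve as a ``hinge'' whose incident Steiner edges form a minimal cut, and one must rule out the possibility that the detached pieces overlap partially rather than coinciding. The no-linked-sets hypothesis is the lever here — if $D$ and $D'$ overlapped in a terminal of $X'$ but were unequal, one could extract an external Steiner edge leaving $D \setminus D'$ and another leaving $D' \setminus D$ whose removal leaves a block path, i.e.\ a linked set, contradicting $N \in \mathcal{N}_0$. Spelling out precisely which two edges to pick, and verifying that removing exactly those two yields a block path (not merely a connected graph), is the delicate bookkeeping step; everything else is routine use of the partition property of valid subsets and the uniqueness of a minimal cut between a fixed pair of sides.
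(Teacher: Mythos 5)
Your proposal has a fundamental orientation error that breaks the key step, and it also imports hypotheses that the lemma does not carry. The lemma is stated for an \emph{arbitrary} augmented network $N$ (i.e.\ any $2$-connected graph of the form $G_\mathrm{UN}+E_{\mathrm{SE}}$), with no assumption that $\Gamma_1,\Gamma_2$ are potential cuts and no ``no linked sets'' hypothesis. You write that the small side $N_i^{(2)}$ of the cut has ``terminal vertices all lying in a common valid subset $X_i'=\bigcap Y_j$,'' and you then deduce $X_1'=X_2'$ from the shared edge $e$. But each $\Gamma_i$ consists of external Steiner edges incident to the component $C$: the terminal endpoint of every edge of $\Gamma_i$ lies in $C$, and $C$ lies entirely on the \emph{large} side $N_i^{(1)}$, not on $N_i^{(2)}$. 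Thus $X_i'$ (to the extent it is defined) sits on the side containing $C$, while the small side $D$ contains only Steiner points and possibly \emph{other} components of $G_\mathrm{UN}$ whose terminals have nothing to do with $X_i'$. So the identification $D\cap X=X'$ is false, the step $X_1'=X_2'$ does not give you $D=D'$, and the ``uniqueness of a minimal cut between a fixed pair of sides'' conclusion is unsupported. Moreover, $\bigcap Y_j\neq\emptyset$ is precisely the defining condition of a \emph{potential} cut; a general minimal external edge-cut need not satisfy it, so even your starting point is narrower than the lemma requires. Your appeal to Lemma~\ref{bcfIn} likewise smuggles in the $\mathcal{N}_0$ (no linked set) hypothesis, which is not in the statement.

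The paper's proof avoids all of this and is purely combinatorial: assume $E=\Gamma_1\cap\Gamma_2\neq\emptyset$; since the $\Gamma_i$ are minimal and distinct, $E\subsetneq\Gamma_i$, and $N-\Gamma_1$ splits into exactly two components $C_1\supseteq C$ and $C_2$. Every edge of $\overline{E}_2:=\Gamma_2\setminus E$ survives in $N-\Gamma_1$ and has an endpoint in $C$, hence lies entirely inside $C_1$. Thus $N':=N-\overline{E}_2$ is connected (by minimality of $\Gamma_2$) and $E$ is an edge-cut of $N'$. But every endpoint of $E$ lies in $C\cup C_2$, and $C$ and $C_2$ remain connected in $N-\Gamma_2\subseteq N'$ via any edge of $\Gamma_1\setminus E$ (which joins $C$ to $C_2$ and is not in $\Gamma_2$), contradicting that $E$ is an edge-cut of $N'$. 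You should aim for an argument at this level of generality; the valid-subset and linked-set machinery is both unnecessary here and, as used, internally inconsistent about which side of the cut the terminals lie on.
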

\begin{proof}
Suppose that every edge in $\Gamma_1\cup\Gamma_2$ is incident to the same component, say $C$, of $G_\mathrm{UN}$, and that $E= \Gamma_1\cap\Gamma_2\neq \emptyset$; see Figure \ref{figEgs17} where grey ellipses are used to highlight particular subsets of edges. Note that $E\neq \Gamma_i$ for any $i$, since the $\Gamma_i$ are minimal. Therefore, since $N$ is either a block or a block path, $N-\Gamma_1$ consists of exactly two connected components, say $C_1,C_2$. Suppose without loss of generality that $C$ is contained in $C_1$. Since $\Gamma_1$ is an edge-cut, and all edges of $\Gamma_2$ are incident to $C$, all edges of $\overline{E}_2:=\Gamma_2-E$ must be contained in $C_1$. Let $N'=N-\overline{E}_2$. Then $E$ is an edge-cut of $N'$. But $C$ and $C_2$ are connected in $N-\Gamma_2$, and consequently also in $N'$. This contradicts the fact that $E$ is an edge-cut of $N'$, since all endpoints of $E$ lie in $C\cup C_2$.
\end{proof}

Since, by assumption, some representative of $G(\mathcal{Q})$ is not $2$-connected, it is possible that $G_{\mathrm{OPT}}(\mathcal{Q})$ is not $2$-connected. However, since $M_\mathcal{Q}$ is $2$-connected, there exists a graph $G_\mathrm{OPT}^2(\mathcal{Q})$ such that $G_\mathrm{OPT}^2(\mathcal{Q})$ is a cheapest $2$-connected representative of $G(\mathcal{Q})$. For any potential cut $\Gamma$ for $G(\mathcal{Q})$, and any edge $e\in\Gamma$, let $x(e)$ be the terminal endpoint of $e$ in $G_{\mathrm{OPT}}(\mathcal{Q})$.

\begin{lemma}\label{atVertex}There exists a cheapest $2$-connected representative $G_\mathrm{OPT}^2(\mathcal{Q})$ of $G(\mathcal{Q})$, a potential cut $\Gamma$ for $G(\mathcal{Q})$, and an edge $e\in\Gamma$, such that the terminal endpoint of $e$ in $G_\mathrm{OPT}^2(\mathcal{Q})$ is $x(e)$.
\end{lemma}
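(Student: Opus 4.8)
The plan is to start from an arbitrary cheapest $2$-connected representative $G^2$ of $G(\mathcal{Q})$ and modify it, one potential cut at a time, so as to make the terminal endpoint of some edge of the cut agree with $G_{\mathrm{OPT}}(\mathcal{Q})$ without increasing the bottleneck. First I would invoke Lemma~\ref{notCon}: since some representative of $G(\mathcal{Q})$ fails to be $2$-connected, a potential cut $\Gamma$ for $G(\mathcal{Q})$ exists; by Lemma~\ref{disEd} the set of potential cuts is uniquely determined and pairwise disjoint, so I can treat each one independently. Fix such a $\Gamma=\{h_j:j\in J\}$ and recall that $X^\Gamma:=\bigcap_{j\in J}Y_j\neq\emptyset$ is a single valid subset of $X$. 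The key observation is that in $G_{\mathrm{OPT}}(\mathcal{Q})$ each edge $h_j$ has a terminal endpoint $x(h_j)$, and by Lemma~\ref{least2} the edges of $\Gamma$ come from at least two distinct components of the Steiner topology $T$; this is what will let me relocate endpoints on one side of the cut without disturbing the rest of the graph.

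The main construction is as follows. Let $G^2$ be any cheapest $2$-connected representative. Working cut by cut, for each potential cut $\Gamma$ I want to show that the endpoints of the edges of $\Gamma$ may be moved to their $G_{\mathrm{OPT}}(\mathcal{Q})$-positions $x(h_j)$ (or at least one of them may be) while retaining $2$-connectivity and not lengthening the bottleneck. The point is that $\Gamma$ being an edge-cut means $G^2-\Gamma$ splits into exactly two pieces — one containing the component $C$ of $G_\mathrm{UN}$ that hosts the terminal endpoints, and one not — because $M_\mathcal{Q}$ is either a block or a block path. Since the terminal endpoints of the $h_j$ all lie in valid subsets containing $X^\Gamma$, and $X^\Gamma$ sits inside a single leaf-block (or degree-two block path interior) of the appropriate side, relocating the terminal endpoint of one edge $h_j$ to $x(h_j)$ keeps both endpoints of every edge of $\Gamma$ inside their prescribed $Y_j$'s, hence keeps the graph a representative of $G(\mathcal{Q})$; and because $x(h_j)$ lies in the same valid subset, Lemma~\ref{joinB} (or its block-path analogue, Lemma~\ref{bcfIn}) guarantees $2$-connectivity is preserved. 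The bottleneck does not increase because $x(h_j)$ is precisely the endpoint chosen by the bottleneck-optimal placement $G_{\mathrm{OPT}}(\mathcal{Q})$, so the edge $h_j$ realised with endpoint $x(h_j)$ is no longer than in $G^2$; all other edges are either unchanged or, by re-optimising the Steiner point locations afterwards, only shortened. Finally, since $\ell_\mathrm{max}(G^2)=\ell_\mathrm{max}(G_\mathrm{OPT}^2(\mathcal{Q}))$ by definition of a cheapest $2$-connected representative, the modified graph is again a cheapest $2$-connected representative, so it may be taken as $G_\mathrm{OPT}^2(\mathcal{Q})$.

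I anticipate the main obstacle to be the bookkeeping around \emph{which} endpoint of $\Gamma$ can be forced to its optimal position and \emph{why} this does not break $2$-connectivity globally. The subtlety is that moving a terminal endpoint within a valid subset is harmless as an internal operation on one leaf-block, but one must check that the edge-cut structure recorded by $\Gamma$ genuinely localises all the relevant changes to one side of the cut — this is exactly where Lemma~\ref{least2} (at least two Steiner-topology components feed $\Gamma$) and the fact that $M_\mathcal{Q}$ is a block or block path are essential, since together they force $G^2-\Gamma$ to have a clean two-component structure with the target valid subset confined to one side. A secondary technical point is handling the case where the terminal endpoint sits in the interior of a degree-two block path rather than a leaf-block: here I would lean on Lemma~\ref{bcfIn} and the maximality in the definition of valid subsets to argue the whole valid subset is available in the relevant block, so the relocated endpoint stays legal. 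Once these localisation facts are in hand, the argument that the bottleneck is non-increasing is immediate from the definition of $G_{\mathrm{OPT}}(\mathcal{Q})$ and the Monotonicity property, so the heart of the proof is purely the connectivity/localisation combinatorics rather than any metric estimate.
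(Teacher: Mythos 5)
Your proposal diverges from the paper's argument in a way that creates two genuine gaps, both centred on your decision to relocate the terminal endpoint of a \emph{single} Steiner edge rather than all the external endpoints of an entire component of the Steiner topology at once.

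First, the bottleneck claim does not go through. You write that ``the edge $h_j$ realised with endpoint $x(h_j)$ is no longer than in $G^2$; all other edges are either unchanged or, by re-optimising the Steiner point locations afterwards, only shortened.'' This is not correct: $x(h_j)$ is the optimal endpoint of $h_j$ only \emph{in the context where every other external edge of the same Steiner-topology component $T'$ is also at its $G_{\mathrm{OPT}}(\mathcal{Q})$ position}. If you fix $h_j$'s endpoint to $x(h_j)$ while leaving the other edges of $T'$ at their $G^2$ positions, the re-optimised Steiner point locations for $T'$ are solving a \emph{different} constrained min-max problem, and its value can strictly exceed $\ell_{\max}(G^2)$. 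The Monotonicity Property cuts against you here rather than for you: shrinking $Y_j$ to the singleton $\{x(h_j)\}$ can only make $\ell_{\max}(G_{\mathrm{OPT}}(\cdot))$ larger, not smaller. The paper sidesteps this entirely by pinning \emph{all} external endpoints of $T'$ to their $G_{\mathrm{OPT}}(\mathcal{Q})$ positions simultaneously (using the fact, borrowed from Bae et al., that components of the Steiner topology can be optimised independently), so that $T'$'s bottleneck in the modified graph equals its bottleneck in $G_{\mathrm{OPT}}(\mathcal{Q})$, which is $\leq\ell_{\max}(G_{\mathrm{OPT}}^2(\mathcal{Q}))$, and all other components are untouched.

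Second, the $2$-connectivity step is not supported by the cited lemmas. Lemma~\ref{joinB} and Lemma~\ref{bcfIn} say nothing about relocating an existing endpoint within a valid subset; the actual danger is precisely that $x(h_j)$ coincides with the terminal endpoint of another edge of the same potential cut $\Gamma$, which would collapse $\Gamma$ onto a single vertex and create a cut-vertex. Your phrase ``because $x(h_j)$ lies in the same valid subset, Lemma~\ref{joinB} \ldots guarantees $2$-connectivity'' is exactly the point where the argument needs work, not a place where a cited lemma closes the gap. The paper resolves this with a contradiction argument: it \emph{assumes} the lemma is false, replaces all of $T'$'s external endpoints with their $G_{\mathrm{OPT}}(\mathcal{Q})$ positions, and then combines Lemma~\ref{least2} (every potential cut draws edges from at least two components of $T$) with the falsity assumption (so the untouched components' cut-edges are \emph{not} at their $x(\cdot)$ positions) to conclude that no potential cut collapses in the modified graph. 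That contradiction framework, and in particular the role Lemma~\ref{least2} plays in it, is the missing idea in your proposal; without it, the localisation claim you correctly identify as ``the heart of the proof'' is asserted rather than proved.
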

\begin{proof}
Suppose that the lemma is not true and let $G_\mathrm{OPT}^2(\mathcal{Q})$ be a cheapest $2$-connected representative of $G(\mathcal{Q})$. Recall that, when constructing $G_{\mathrm{OPT}}(\mathcal{Q})$ as in Section \ref{jusdoit}, the optimal endpoints for every Steiner edge are found. As observed by Bae et al. \cite{bae1}, each component of the Steiner topology $T$ of $G(\mathcal{Q})$ can be independently dealt with. Let $T'$ be a component of $T$ such that $T'$ contains an edge of some $\Gamma$. Suppose that the external Steiner edges of $G_\mathrm{OPT}^2(\mathcal{Q})$ are $s_1x_1',...,s_px_p'$ and the external Steiner edges of $T'$ in $G_\mathrm{OPT}$ are $\{s_ix_i'':i\in I\}$ for some index set $I$. Let $\mathcal{Q}'=\langle(\{s_1\},\{x_1'\}),...,(\{s_p\},\{x_p'\})\rangle$ and let $\mathcal{Q}''$ be the sequence that results by replacing $(\{s_i\},\{x_i'\})$ by $(\{s_i\},\{x_i''\})$ in $\mathcal{Q}'$ for every $i\in I$. Since the lemma is assumed to be false, it follows from Lemma \ref{least2} that the edges of any minimal external edge-cut are not all incident to the same point in $G_\mathrm{OPT}(\mathcal{Q}'')$; therefore $G_\mathrm{OPT}(\mathcal{Q}'')$ is $2$-connected and $G_\mathrm{OPT}^2(\mathcal{Q}'')$ exists. Now $\ell_\mathrm{max}(G_\mathrm{OPT}^2(\mathcal{Q}))=\ell_\mathrm{max}(G_\mathrm{OPT}^2(\mathcal{Q}''))$, since in $G_{\mathrm{OPT}}(\mathcal{Q})$ the edges of $T'$ have optimal bottleneck length. The lemma follows.
\end{proof}

Of course, since $G_\mathrm{OPT}^2(\mathcal{Q})$ is $2$-connected, some edge $e$ in $\Gamma$ is not incident to $x(e)$.

Suppose that there are $q$ potential cuts for $G(\mathcal{Q})$. Using the above results, we now modify $\mathcal{Q}$ to a set of new sequences $\{\mathcal{Q}(i,q)\}$ so that every representative of every $G(\mathcal{Q}(i,q))$ is $2$-connected. This directly leads to a constructive method for finding $G_\mathrm{OPT}^2(\mathcal{Q})$, namely Function \texttt{2Connect}, which we describe next.

As before, let $Q:=\langle (s_1,Y_1),...,(s_\rho,Y_{\rho})\rangle$, and denote by $h_i$ the labelled edge of $G(\mathcal{Q})$ with endpoints in $\{s_i\},Y_{i}$ respectively. For non-negative integers $i,j$, let $$\mathcal{Q}(i,j)=\langle (s_1,Y_1(i,j)),...,(s_\rho,Y_\rho(i,j))\rangle$$ denote a Steiner endpoint sequence. Let $\{\Gamma_1,...,\Gamma_q\}$ be the set of potential cuts for $G(\mathcal{Q})$.\\

\newcommand{\forcond}{$j:=1$ \KwTo $q$}

\begin{algorithm}[H]
\SetAlgoLined
\NoCaptionOfAlgo
\KwIn{$\mathcal{Q}$}
\KwOut{$G_\mathrm{OPT}^2(\mathcal{Q})$}
$\Lambda':=\emptyset$\\
$i:=1$\;
\For{every set $\{r_1,...,r_q\}=\{1,...,q\}$}
{
\For{every $(h_{t_1},...,h_{t_q})\in \Gamma_{r_1}\times...\times\Gamma_{r_q}$ and $(h_{b_1},...,h_{b_q})\in \Gamma_{r_1}\times...\times\Gamma_{r_q}$ such that $t_c\neq b_c$ for any $c$}{
$\mathcal{Q}(i,1):=\mathcal{Q}$\;
\For{\forcond}{
Let $G^j=G_{\mathrm{OPT}}(\mathcal{Q}(i,j))$\;
Let $x({h_{t_j}})$ be the terminal endpoint of $h_{t_j}$ in $G^j$\;
Let $Y_{t_j}(i,j):=\{x({h_{t_j}})\}$\;
Let $Y_{b_j}(i,j):=Y_{b_j}(i,j)-\{x({h_{t_j}})\}$\;
}
Add $\mathcal{Q}(i,q)$ to $\Lambda'$\;
$i:=i+1$\;
}
}
Let $G_\mathrm{OPT}^2(\mathcal{Q})$ be a cheapest member of $\{G_\mathrm{OPT}(\mathcal{Q}'):\mathcal{Q}'\in\Lambda'\}$\;
\caption{\textbf{Function} 2Connect \label{mkLambda1}}
\end{algorithm}

It follows from repeated application of Lemma \ref{atVertex} that Function \texttt{2Connect} correctly computes $G_\mathrm{OPT}^2(\mathcal{Q})$. Note also that the time-complexity of Function \texttt{2Connect} is the same as the complexity of constructing $G_\mathrm{OPT}(\mathcal{Q})$ (as given in Theorem \ref{consOp}), since the cardinality of $\Lambda'$ is a function of $k$ only. Also, the potential cuts of any $G(\mathcal{Q})$ can found in $O(n^2)$ time as a preprocessing step.

\begin{figure}[htb]
  \begin{center}
    \includegraphics[scale=0.6]{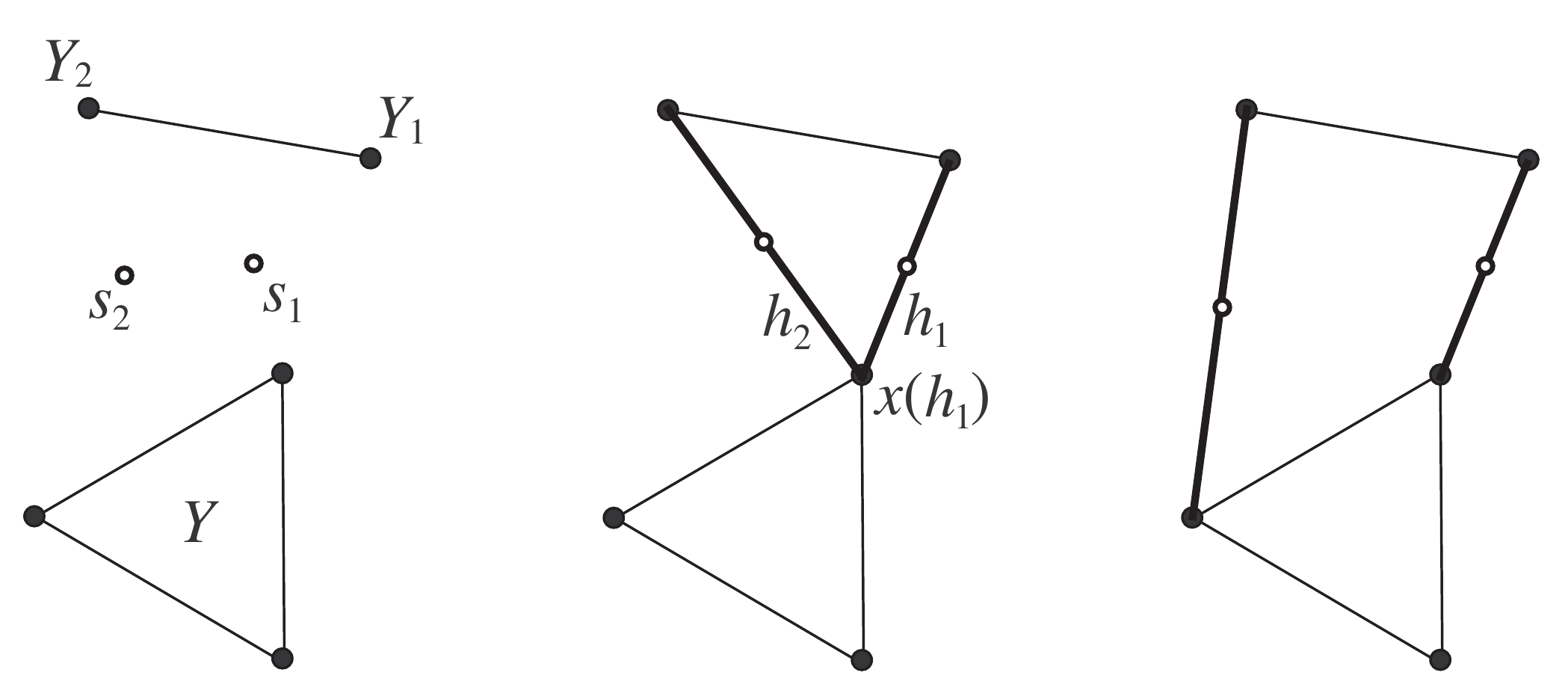}\\
  \end{center}
  \caption{An example of a construction by Function \texttt{2Connect}}
  \label{figEgs25}
\end{figure}

In Figure \ref{figEgs25} we illustrate some aspects of Function \texttt{2Connect} with input sequence $\mathcal{Q}=\langle(s_1,Y),(s_2,Y),(s_1,Y_1),(s_2,Y_2)\rangle$. The first subfigure depicts $G_\mathrm{UN}$ and a corresponding partition of the the terminal-set into valid subsets, $Y_1,Y_2,Y$. The second subgraph depicts $G_\mathrm{OPT}(\mathcal{Q})$ and the node $x({h_1})=x({h_2})$, where $\{h_1,h_2\}$ is a potential cut. The final subgraph shows $G_\mathrm{OPT}^2(\mathcal{Q})$, which was the cheapest of two graphs constructed by Function \texttt{2Connect} in this case (the other graph is visualised by switching the terminal endpoints of $h_1$ and $h_2$ in the third subfigure).

\subsubsection{Augmented networks with linked sets}\label{withLink}
In this section we show how we deal with augmented networks that do contain linked sets. Essentially we show that every augmented graph $\widehat{N}$ can be obtained from a graph $N\in\mathcal{N}_0$ by ``splitting" internal Steiner edges. This consists of a process whereby some internal Steiner edges of $\widehat{N}$ are replaced by linked sets. We then show how the Steiner endpoint sequence for which $\widehat{N}$ is a representative is modified in order to accommodate the new linked edges. For this purpose we introduce the concept of ``markers". For every internal Steiner edge $h$ of $\widehat{N}$ that we are to split, we basically choose (or ``mark") a block $B$ of $\widehat{N}-h$. This divides the terminals of $\widehat{N}-h$ into two overlapping parts, namely the terminals in the blocks to the left and the right of $B$ (for some orientation, where each part includes $B$). Each of these parts serves as a valid subset for one of the two edges in the new linked set. There may be as many as $n$ distinct blocks in $\widehat{N}-h$, and therefore choosing the optimal marker cannot be done in constant time. However, in the 2 Bottleneck algorithm we perform a binary search on the set of markers in order to reduce the complexity of finding an optimal solution.

Recall that $\mathcal{N}$ is the class of \textit{all} augmented networks (containing $G_\mathrm{UN}$), such that every $N\in\mathcal{N}$ has at most $\Delta k$ Steiner edges, and such that the chord paths and Steiner edges of every member of $\mathcal{N}$ are critical. We begin by stating a few more results related to the concept of linked sets.

A general property of linked sets -- that can be seen from Figure \ref{figEgNew2} and is easily demonstrated -- is that there is an implicit ordering of the terminal endpoints of the two Steiner edges with respect to the blocks of $N-E_{\mathrm{LINK}}$, for any $N\in\mathcal{N}$ and any linked set $E_{\mathrm{LINK}}$. Suppose that $E_{\mathrm{LINK}}=\{g_{1,1},g_{1,2}\}$ and that the block path of $N-E_{\mathrm{LINK}}$ is $B_1,...,B_{q}$ for some $q>2$. Without loss of generality let $g_{1,1}$ be the unique edge of $E_{\mathrm{LINK}}$ with Steiner endpoint contained in the interior of $B_1$. Let $B_{-1}=B_{q+1}=\emptyset$. For every $i\in\{1,...,q\}$, let $U_{i,1}=B_i-(V(B_{i-1})\cap V(B_i))$, and let $U_{i,2}=B_i-(V(B_{i+1})\cap V(B_i))$. Let $s_1,s_2$ be the Steiner endpoints of $g_{1,1},g_{1,2}$ respectively. Now consider a set of vertex pairs $\{g_{i,j}:1\leq j\leq 2\}$, which includes the edges $g_{1,1},g_{1,2}$, where each $g_{i,j}=(s_j,y_{i,j})$ and each $y_{i,j}$ is a terminal of $N$. For any $i$ let $a_{i}\in\{1,...,q\}$ be the index such that $y_{i,1}\in U_{a_{i},1}$, and let $a_{i}'\in\{1,...,q\}$ be the index such that $y_{i,2}\in U_{a_{i}',2}$; see Figure \ref{figEgNew2}. For any $i$, if $a_{i}'\leq a_{i}$, where $1<a_{i}'<q$ and $1<a_{i}<q$, then we write $y_{i,2}\preccurlyeq y_{i,1}$ (note that the indices of the leaf-blocks are excluded). For any $i$ let $N^-=N-E_{\mathrm{LINK}}+g_{i,1}+g_{i,2}$.

\begin{lemma}[\cite{brazil2}]\label{isLink}$N^-$ is a member of $\mathcal{N}$ and $\{g_{i,1},g_{i,2}\}$ is a linked set of $N^-$, if and only if $y_{i,2}\preccurlyeq y_{i,1}$.
\end{lemma}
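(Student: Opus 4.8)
The statement is a biconditional about when swapping the pair $E_{\mathrm{LINK}}=\{g_{1,1},g_{1,2}\}$ for another candidate pair $\{g_{i,1},g_{i,2}\}$ produces a graph $N^-$ that again lies in $\mathcal{N}$ with $\{g_{i,1},g_{i,2}\}$ a linked set. Since both directions hinge on the same structural picture --- the block path $B_1,\dots,B_q$ of $N-E_{\mathrm{LINK}}$ and the positions $a_i,a_i'$ of the two new terminal endpoints --- the plan is to analyse $N^-$ directly in terms of this block path. I would first fix notation: $N-E_{\mathrm{LINK}}$ is a block path (this is part of the definition of a path-forming set), the edges $g_{i,1},g_{i,2}$ have Steiner endpoints $s_1\in\mathrm{int}(B_1)$ and $s_2\in\mathrm{int}(B_q)$ (same Steiner endpoints as the original linked set, by construction of the pairs), and terminal endpoints $y_{i,1}\in U_{a_i,1}$, $y_{i,2}\in U_{a_i',2}$. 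The key preliminary observation is that adding a single edge from $\mathrm{int}(B_1)$ to $\mathrm{int}(B_c)$ of the block path merges exactly the blocks $B_1,\dots,B_c$ into one new block (Lemma~\ref{joinB} gives $2$-connectivity of the merged piece), and adding both $g_{i,1}$ and $g_{i,2}$ merges $B_1,\dots,B_{a_i}$ through $s_1$ and merges $B_{a_i'},\dots,B_q$ through $s_2$.

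From here the two directions split. For the ``if'' direction, assume $y_{i,2}\preccurlyeq y_{i,1}$, i.e. $a_i'\le a_i$ with $1<a_i'<a_i<q$ (indices strictly interior, since $y_{i,1},y_{i,2}$ must sit in block interiors that are not leaf blocks). Then the two merged intervals $\{B_1,\dots,B_{a_i}\}$ and $\{B_{a_i'},\dots,B_q\}$ overlap in at least the block $B_{a_i'}$ (in fact $B_{a_i'},\dots,B_{a_i}$), so their union with the added edges is a connected subgraph, and I would argue it is $2$-connected --- every vertex lies on a cycle, using the $2$-connectivity of each $B_j$ and the overlap to splice cycles together. Thus $N^-$ is $2$-connected; it still has $G_\mathrm{UN}$ inside it and at most $\Delta k$ Steiner edges (we only swapped terminal endpoints), and one checks that all Steiner edges and chord paths of $N^-$ remain critical (no new cycle lies entirely among Steiner points, and the swap does not create a chord), so $N^-\in\mathcal{N}$. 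To see $\{g_{i,1},g_{i,2}\}$ is a linked set of $N^-$: remove these two edges and we recover exactly the block path $N-E_{\mathrm{LINK}}=B_1,\dots,B_q$; the two removed edges are external (terminal endpoints are terminals) with Steiner endpoints in $\mathrm{int}(B_1)$ and $\mathrm{int}(B_q)$, which are the two leaf blocks, so the linked-set definition is satisfied.

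For the ``only if'' direction, I would prove the contrapositive: if $\neg(y_{i,2}\preccurlyeq y_{i,1})$ then either $N^-\notin\mathcal{N}$ or $\{g_{i,1},g_{i,2}\}$ is not a linked set of $N^-$. The negation falls into cases: (a) $a_i'>a_i$, so the two merged intervals $\{B_1,\dots,B_{a_i}\}$ and $\{B_{a_i'},\dots,B_q\}$ are disjoint (or meet in at most a single cut vertex) --- then $N^-$ fails to be $2$-connected, because the cut vertices between $B_{a_i}$ and $B_{a_i'}$ are not covered by either added edge; so $N^-\notin\mathcal{N}$ (it is not even $2$-connected, hence not an augmented network). (b) One of $a_i,a_i'$ equals a leaf-block index $1$ or $q$: say $a_i=1$, meaning $y_{i,1}\in U_{1,1}$, i.e. the terminal endpoint of $g_{i,1}$ lies in $B_1$ itself --- then $g_{i,1}$ becomes a chord of a cycle inside $B_1$ (or creates one), hence is not critical, so $N^-\notin\mathcal{N}$; symmetrically for $a_i'=q$. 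The one remaining subtlety is the boundary case $a_i'=a_i$ versus $a_i'<a_i$, and whether equality still counts as $\preccurlyeq$ --- here I would note that the definition in the paper writes $a_i'\le a_i$, so equality is allowed and the ``if'' argument above already covers it (overlap in the single block $B_{a_i}$).

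The main obstacle I anticipate is handling the edge cases cleanly: precisely which block-interior memberships force a chord (and hence a non-critical edge, ejecting $N^-$ from $\mathcal{N}$) versus which merely break $2$-connectivity, and getting the strict-versus-nonstrict inequalities at the leaf-block indices exactly right. The ``generic'' interior case is essentially Lemma~\ref{joinB} applied twice plus a cycle-splicing argument; the real care is in the boundary bookkeeping of $a_i,a_i'\in\{1,q\}$. Since this lemma is cited from \cite{brazil2}, I would also remark that the proof is identical in form to the $k\le 2$ case, the block path $B_1,\dots,B_q$ playing the same role regardless of how many Steiner points lie elsewhere in $N$.
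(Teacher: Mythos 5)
The paper does not prove this lemma; it is cited as a result from \cite{brazil2}, so there is no in-paper proof to compare against. Evaluating your attempt on its own merits, the overall skeleton is sound --- set up the block path $B_1,\dots,B_q$, observe that $g_{i,1}$ merges $B_1,\dots,B_{a_i}$ and $g_{i,2}$ merges $B_{a_i'},\dots,B_q$, and argue the biconditional by comparing the merged intervals. The $2$-connectivity direction and the recovery of the linked-set structure in $N^- - \{g_{i,1},g_{i,2}\}$ are both handled correctly.

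There is, however, a genuine gap where you write ``one checks that all Steiner edges and chord paths of $N^-$ remain critical.'' Membership in $\mathcal{N}$ requires \emph{every} Steiner edge and chord path of $N^-$ to be critical, not merely the two new edges $g_{i,1},g_{i,2}$. Criticality (equivalently, by Dirac's characterisation, not being a chord of any cycle) is a global property of the ambient graph: a Steiner edge $e$ sitting inside some $B_j$ is critical in $N$, but replacing $g_{1,1},g_{1,2}$ by $g_{i,1},g_{i,2}$ changes which cycles pass through $B_j$, since the merged intervals $\{B_1,\dots,B_{a_i}\}$ and $\{B_{a_i'},\dots,B_q\}$ can be strictly larger or differently positioned than $\{B_1,\dots,B_{a_1}\}$ and $\{B_{a_1'},\dots,B_q\}$. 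One must therefore show that no Steiner edge or chord path of $N - E_{\mathrm{LINK}}$ becomes a chord of some newly created cycle through $g_{i,1}$ or $g_{i,2}$; this is precisely the delicate part of the lemma and cannot be asserted without an argument. A secondary, smaller issue is that the negation of $y_{i,2}\preccurlyeq y_{i,1}$ (which in the paper's definition packages $a_i'\le a_i$ together with $1<a_i'<q$ and $1<a_i<q$) produces four boundary subcases $a_i\in\{1,q\}$, $a_i'\in\{1,q\}$, of which you treat only $a_i=1$ and $a_i'=q$; the cases $a_i=q$ and $a_i'=1$ each make the \emph{other} new edge a chord (since $g_{i,1}$ alone already $2$-connects the entire block path) and so also eject $N^-$ from $\mathcal{N}$, but they need to be stated.
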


Let $N\in\mathcal{N}_0$ and let $e=s_is_i'$ be an internal Steiner edge of $N$. We say that $e$ is \textit{splittable} if there exists a pair of terminals $y_1,y_2$ in $N$ such that $y_2\preccurlyeq y_1$ for some orientation of $N-e$. Let $N'=N-e+s_iy_1+s_i'y_2$. Then $N'$ is called a \textit{split} of $N$ with respect to $e$. We generalise this definition as follows.

\noindent\textbf{Definition.} \textit{Let $E_\mathrm{\,I}=\langle e_1,...,e_{|E_\mathrm{\,I}|}\rangle$ be a sequence of internal Steiner edges of $N\in\mathcal{N}_0$. Let $\widehat{N}(0),...,\widehat{N}(\vert E_\mathrm{\,I}\vert)$ be a sequence, with $\widehat{N}(0)=N$, such that, for every $i\in \{1,...,\vert E_\mathrm{\,I}\vert\}$, the graph $\widehat{N}(i)$ is a split of $\widehat{N}(i-1)$ with respect to $e_i$. Then $\widehat{N}:=\widehat{N}(\vert E_\mathrm{\,I}\vert)$ is a \textbf{split} of $N$ with respect to $E_\mathrm{\,I}$.}

We employ the following notation in the next proposition. Let $\widehat{N}$ be any graph in $\mathcal{N}$. For any $t\geq 1$ let $Z=\langle\{g_1,g_1'\},...,\{g_t,g_t'\}\rangle$ be a sequence of pairs of external Steiner edges of $\widehat{N}$, where each $g_i=s_iy_i$ and each $g_i'=s_i'y_i'$, where $s_i,s_i'$ are Steiner points, and where $y_i,y_i'$ are terminals. For every $1\leq j\leq t$, let $\widehat{N}_Z(t-j)=\widehat{N}-\{g_i:1\leq i\leq j\}-\{g_i':1\leq i\leq j\}+\{s_is_i':1\leq i\leq j\}$.

\begin{proposition}For every $\widehat{N}\in \mathcal{N}$ containing at least one linked set there exists a graph $N\in \mathcal{N}_0$ and sequence $E_\mathrm{\,I}$ of internal Steiner edges of $N$ such that $\widehat{N}$ is a split of $N$ with respect to $E_\mathrm{\,I}$.
\end{proposition}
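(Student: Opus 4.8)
The plan is to reverse the splitting process: starting from $\widehat{N}$, I will repeatedly "merge" linked sets back into single internal Steiner edges until no linked sets remain, thereby producing a graph $N\in\mathcal{N}_0$, and then read off the sequence $E_\mathrm{\,I}$ from the merges performed, in reverse order. Concretely, suppose $\widehat{N}\in\mathcal{N}$ contains a linked set $E_{\mathrm{LINK}}=\{g_1,g_2\}$ with Steiner endpoints $s_1,s_2$. By Lemma \ref{joinB} (applied to the block path $\widehat{N}-E_{\mathrm{LINK}}$, whose leaf-blocks contain $s_1$ and $s_2$ in their interiors respectively), the graph $\widehat{N}':=\widehat{N}-E_{\mathrm{LINK}}+s_1s_2$ is $2$-connected. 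I must check that $\widehat{N}'\in\mathcal{N}$: it has strictly fewer Steiner edges than $\widehat{N}$, so the bound $\Delta k$ is preserved; and I need to verify that all chord paths and Steiner edges of $\widehat{N}'$ remain critical. The new edge $s_1s_2$ is critical because it is the unique edge joining the two former leaf-blocks of the block path $\widehat{N}-E_{\mathrm{LINK}}$ (removing it disconnects the graph). For the remaining edges and chord paths, criticality in $\widehat{N}'$ follows because any chord-path or non-critical-edge certificate in $\widehat{N}'$ would, after replacing any use of $s_1s_2$ by the detour through one of $g_1,g_2$ and a path inside $\widehat{N}-E_{\mathrm{LINK}}$, yield a corresponding certificate in $\widehat{N}$ — contradicting $\widehat{N}\in\mathcal{N}$.

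Next I must verify that the merge step is genuinely the inverse of a split, i.e.\ that $\widehat{N}$ is a split of $\widehat{N}'$ with respect to the edge $e=s_1s_2$. This is exactly the content of Lemma \ref{isLink}: since $E_{\mathrm{LINK}}=\{g_1,g_2\}$ is a linked set of $\widehat{N}=\widehat{N}'-e+g_1+g_2$, the lemma (read in the direction "if $N^-$ has $\{g_{i,1},g_{i,2}\}$ as a linked set then $y_{i,2}\preccurlyeq y_{i,1}$") tells us that the terminal endpoints of $g_1,g_2$ satisfy $y_2\preccurlyeq y_1$ for the appropriate orientation of $\widehat{N}'-e$; hence $e$ is splittable in $\widehat{N}'$ and $\widehat{N}$ is precisely the split of $\widehat{N}'$ with respect to $e$. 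So one merge step has reduced the number of linked sets by at least one while staying inside $\mathcal{N}$.

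Iterating, I obtain a chain $\widehat{N}=\widehat{N}(m)\to\widehat{N}(m-1)\to\cdots\to\widehat{N}(0)=:N$, where each arrow is a merge, $N$ has no linked sets (so $N\in\mathcal{N}_0$), and each arrow is the inverse of a split with respect to the newly created internal edge $e_i=s_is_i'$. Setting $E_\mathrm{\,I}=\langle e_m,e_{m-1},\dots,e_1\rangle$ and tracing the definition, $\widehat{N}$ is a split of $N$ with respect to $E_\mathrm{\,I}$: the intermediate graphs $\widehat{N}(0),\dots,\widehat{N}(m)$ in the definition are exactly the $\widehat{N}(i)$ above, each being a split of its predecessor with respect to the corresponding $e_i$. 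Termination of the iteration is immediate since each step strictly decreases the (finite, bounded-by-$\Delta k$) number of Steiner edges, and hence the number of possible linked sets, eventually reaching a graph with none.

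The main obstacle is the bookkeeping in the merge step — specifically, confirming that criticality of \emph{all} chord paths is preserved when $s_1s_2$ replaces the linked pair, and that the orientation conventions of Lemma \ref{isLink} line up so that the merge is recognised as the inverse of a split. The connectivity claim is handled directly by Lemma \ref{joinB}, and the "inverse of a split" identification is handled by Lemma \ref{isLink}; what requires care is the argument that a non-critical edge or chord path appearing in $\widehat{N}'$ can be pulled back to one in $\widehat{N}$, which amounts to observing that the three-vertex path $s_1,(\text{detour}),s_2$ realised by $g_1$ together with a $B_1$–$B_q$ path in $\widehat{N}-E_{\mathrm{LINK}}$ and then $g_2$ is available in $\widehat{N}$ wherever $s_1s_2$ is used in $\widehat{N}'$.
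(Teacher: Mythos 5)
Your proof is correct and follows essentially the same route as the paper: iteratively merge each linked set $\{g_1,g_2\}$ into an internal edge $s_1s_2$, invoking Lemma \ref{joinB} for $2$-connectivity, checking that criticality is preserved, and using Lemma \ref{isLink} to recognise each merge as the inverse of a split, then terminate and reverse the sequence. One small slip: removing $s_1s_2$ from $\widehat{N}'$ leaves the block path $\widehat{N}-E_{\mathrm{LINK}}$, which is connected but merely $1$-connected, so $s_1s_2$ is critical because its removal \emph{reduces connectivity}, not because it disconnects the graph.
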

\begin{proof}
Let $\widehat{N}\in\mathcal{N}$. We select the elements of sequence $Z$ as follows: let $\{g_1,g_1'\}$ be a linked set of $\widehat{N}$ (if no linked set exists then the theorem is proven). Since $s_1\in \mathrm{int}(B_1(\{g_1,g_1'\})\vert_{\widehat{N}})$ and $s_1'\in \mathrm{int}(B_{2}(\{g_1,g_1'\})\vert_{\widehat{N}})$ for some orientation, by Lemma \ref{joinB}, the graph $\widehat{N}(t-1)$ is $2$-connected. It is simple to verify that no non-critical Steiner edges or degree two chord paths were formed during the transformation from $\widehat{N}$ to $\widehat{N}(t-1)$, since all cycles in $\widehat{N}(t-1)$ that do not occur in $\widehat{N}$ include edge $s_1s_1'$. Therefore $\widehat{N}(t-1)\in \mathcal{N}$. Since $\{g_1,g_1'\}$ is a linked set, it follows from Lemma \ref{isLink} that $y_1'\preccurlyeq y_1$. Next, let $\{g_2,g_2'\}$ be a linked-set of $\widehat{N}(t-1)$ (once again, if no such set exists then the theorem follows). We perform the same process for $\{g_2,g_2'\}$ to arrive at the graph $\widehat{N}(t-2)$, and repeat until there are no more linked sets to be found. Suppose that the resultant graph containing no linked sets occurs after $q$ steps; let $t=q$, and let $N=\widehat{N}(0)$. Then $N\in\mathcal{N}_0$ since $N$ has no linked sets, and all Steiner edges and chord paths are critical. Also, $\widehat{N}(0),...,\widehat{N}(q)$ satisfies the property that $\widehat{N}(i)$ is a split of $\widehat{N}(i-1)$ for all $i\in\{1,...,q\}$. Therefore $\widehat{N}$ is a split of $N$ with respect to $E_\mathrm{\,I}:=\langle s_qs_q',...,s_1s_1' \rangle$, and the theorem follows.
\end{proof}

%[state upfront that make hi labelled, ei for N, and fi for MQ]
Now that we have shown that every augmented network $\widehat{N}$ is a split of a graph in $N\in\mathcal{N}_0$, we describe how the Steiner endpoint sequence for $N$, say $\mathcal{Q}$, is converted into a Steiner endpoint sequence for $\widehat{N}$. We do this by first constructing a certain canonical representative (referred to as $M(a)$ below) for $\mathcal{Q}$. The purpose of constructing the canonical representative is obtain a fixed block path $M(a)-h_a$ for each internal Steiner edge $h_a$ of $N$, so that the afore-mentioned binary search that is to be performed on the set of ``markers" is well defined.

In what follows we will be considering various specific representatives of $G(\mathcal{Q})$ for some $\mathcal{Q}\in \Lambda_1$. The symbol $h_j$, which is the labelled edge of $G(\mathcal{Q})$ corresponding to $(s_j,Y_j)$ in $\mathcal{Q}$, will, without causing confusion, be used to denote the corresponding edge in any of these representatives.

Let $\mathcal{Q}$ be an arbitrary sequence in $\Lambda_0$, and let $N_\mathcal{Q}$ be any $2$-connected representative of $G(\mathcal{Q})$. Let $h_a$ be any internal Steiner edge of $G(\mathcal{Q})$. Consider the following step-by-step process that converts $N_\mathcal{Q}$ into another $2$-connected representative of $G(\mathcal{Q})$, say $M(a)$, by relocating terminal endpoints of external Steiner edges. At the $i$-th step the endpoint of Steiner edge $h_i$ is relocated (if $h_i$ is an internal Steiner edge then nothing is done at this step). Since the resulting graph $M(a)$ is also a representative of $G(\mathcal{Q})$, the terminal endpoint of $h_i$ before and after step $i$ must belong to the same valid subset of $X$. The choice of new endpoint of $h_i$ is arbitrary, except in the following case:

Let $x_i$ be the terminal endpoint of $h_i$ in $N_\mathcal{Q}$, and let $x_i'$ be its endpoint in $M(a)$. Suppose that $h_i$ is incident to a terminal in a valid subset $X'$, where $X'$ is the interior of a degree two block path $H$ of $G_\mathrm{UN}$. Let $P$ be a shortest path in $N_\mathcal{Q}-h_i$ such that an end-edge of $P$ is $h_i$; $P$ connects $x_i$ to a distinct terminal $y'$ in $H$; and the interior of $P$ does not intersect $H$. Let $B_1,...,B_{p_H}$ be an orientation of the blocks of $H$ such that the index of the block containing $x_i$ is not smaller than the index of the block containing $y'$. Then $x_i'$ is selected to lie anywhere in $X'$ such that index of the block of $H$ containing $x_i'$ is maximised and the resulting graph is still $2$-connected.

\noindent\textbf{Observation} If $M_i(a)$ is the graph after step $i$ then $M_{i}(a)-h_a$ contains at least as many blocks as $M_{i+1}(a)-h_a$.

\begin{figure}[htb]
  \begin{center}
    \subfigure[]{\label{figEgs24a}\includegraphics[scale=0.5]{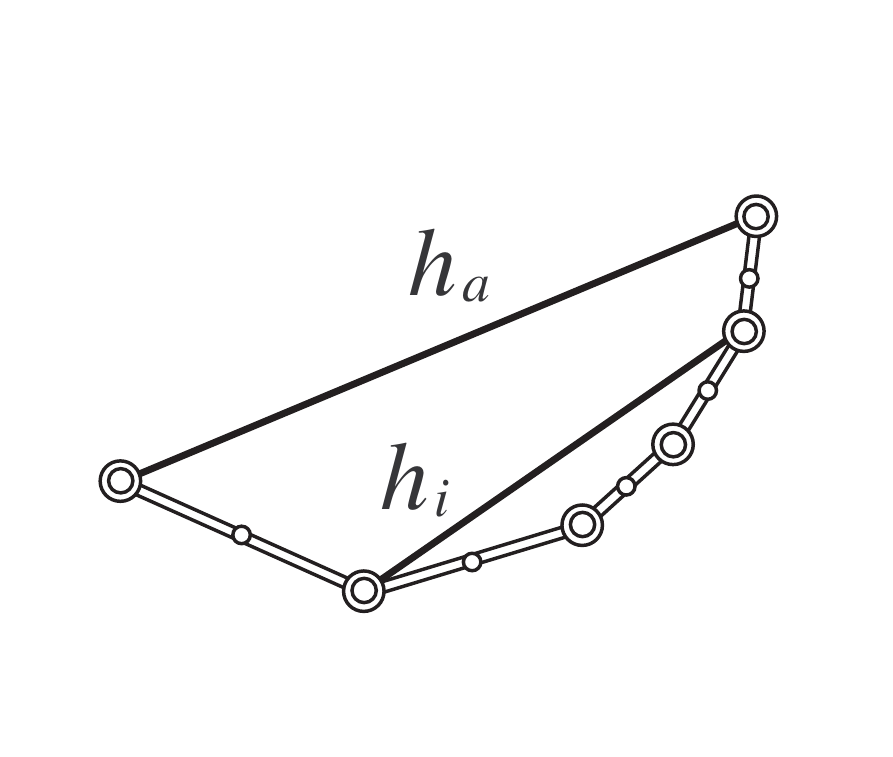}}
    \subfigure[]{\label{figEgs24b}\includegraphics[scale=0.5]{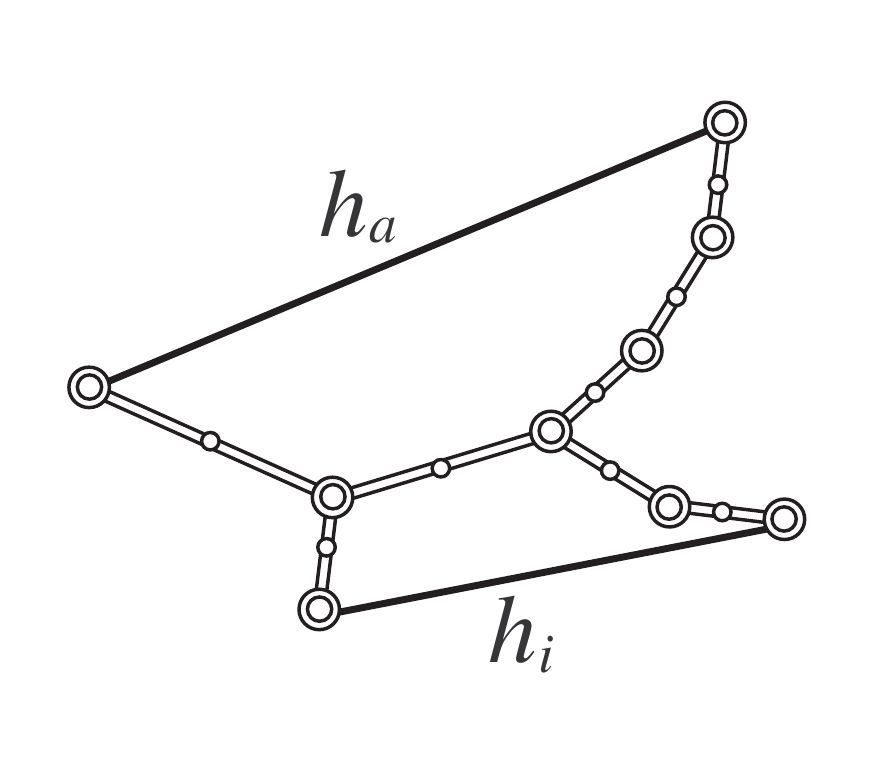}}
    \subfigure[]{\label{figEgs24c}\includegraphics[scale=0.5]{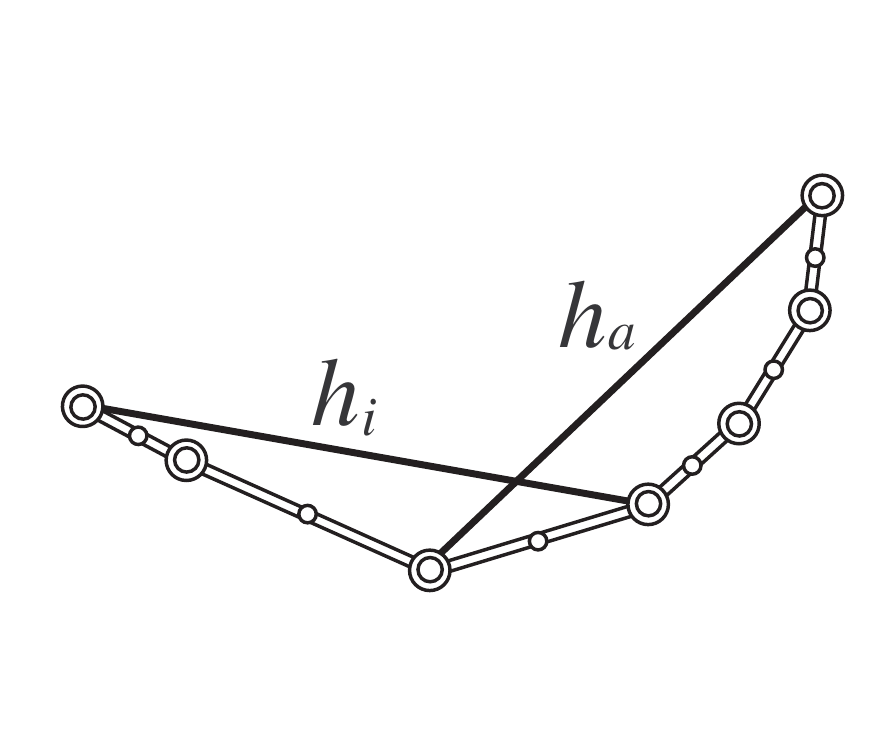}}
    \subfigure[]{\label{figEgs24d}\includegraphics[scale=0.5]{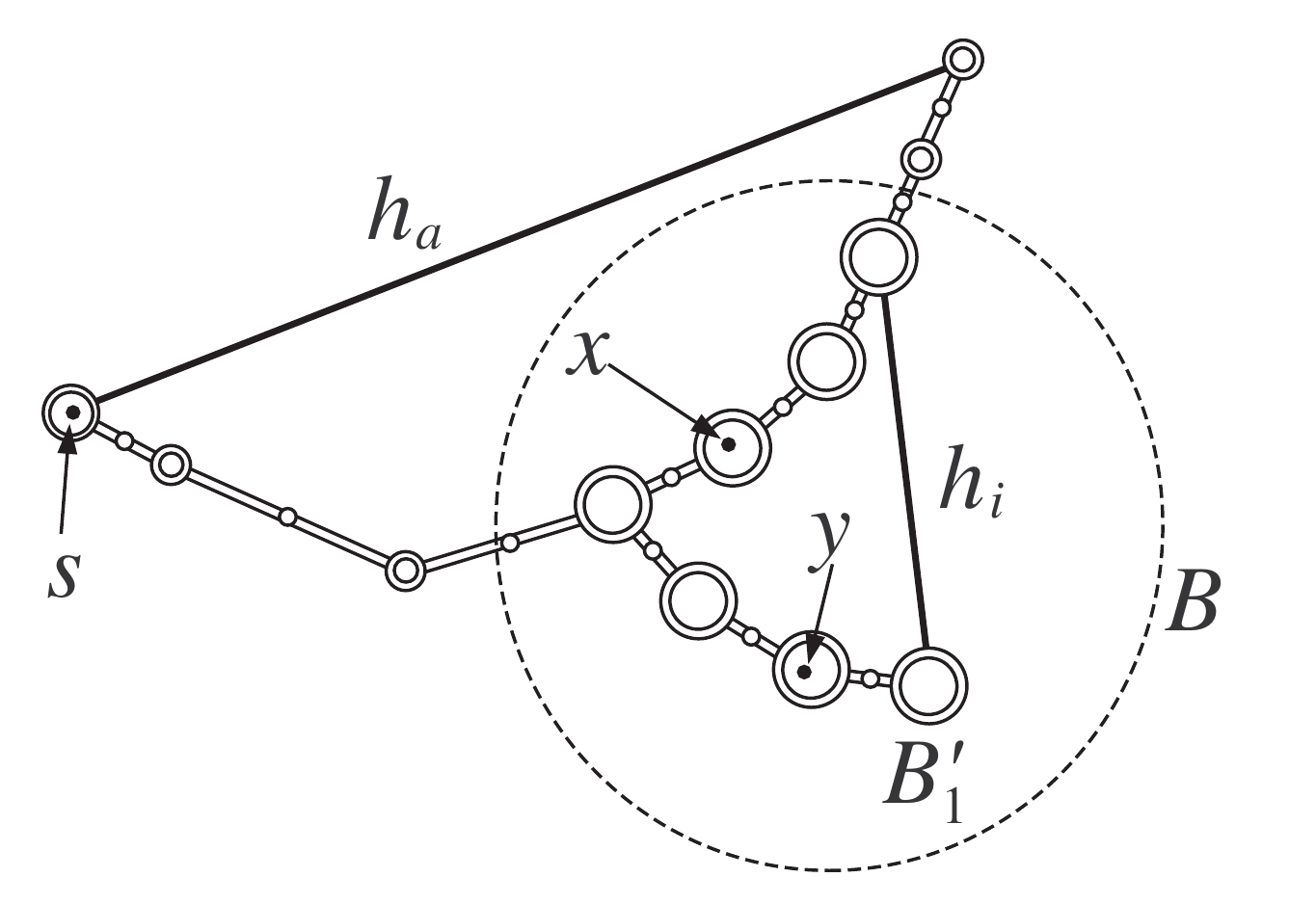}}
  \end{center}
  \caption{}
  \label{figEgs24}
\end{figure}

\begin{lemma}\label{keepOrd}Let $s$ be an endpoint of an internal Steiner edge $h_a$. Let $x,y$ be any two terminals such that there exists a path $P'$ in $N_\mathcal{Q}-h_a$ connecting $s$ and $y$, and such that $x$ is an interior vertex of $P'$. Then there exists a path $P''$ connecting $s$ and $y$ in $M(a)-h_a$ such that $P''$ also contains $x$ in its interior.
\end{lemma}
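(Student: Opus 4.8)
The plan is to reduce the statement to a combinatorial fact about block paths and then to induct along the step-by-step construction of $M(a)$ from $N_\mathcal{Q}$.

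First I would use that, since $h_a$ is a critical Steiner edge of an augmented network in $\mathcal{N}$, every singleton Steiner edge set is path-forming; hence each of $N_\mathcal{Q}-h_a$, $M(a)-h_a$, and more generally each $M_i(a)-h_a$, where $M_0(a):=N_\mathcal{Q},M_1(a),\dots,M_\rho(a)=M(a)$ are the graphs produced after the successive relocation steps defining $M(a)$, is a block path. In a block path $G$, for distinct vertices $u,w$ let $B_G(u,w)$ denote the union of the consecutive blocks in the minimal sub-block-path containing both $u$ and $w$. The elementary fact I would establish is: for any vertex $z\notin\{u,w\}$ there is a $u$--$w$ path in $G$ with $z$ as an interior vertex if and only if $z\in V(B_G(u,w))$. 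The ``if'' direction follows by concatenating, block by block, $v$--$v'$ subpaths through prescribed vertices (each block is $2$-connected, so the fan lemma applies), and the ``only if'' direction follows because a path cannot leave a block and later re-enter it. Applying this with $G=N_\mathcal{Q}-h_a$ and with $G=M(a)-h_a$, and noting that $x\ne s$ (as $x$ is a terminal and $s$ a Steiner point) and $x\ne y$ (as $x$ is interior to $P'$), the lemma becomes the implication $x\in V(B_{N_\mathcal{Q}-h_a}(s,y))\Rightarrow x\in V(B_{M(a)-h_a}(s,y))$.

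Next I would prove by induction on $i$ that $V(B_{M_i(a)-h_a}(s,y))$ is non-decreasing in $i$, which suffices. At step $i{+}1$ only the terminal endpoint of one external Steiner edge $h_{i+1}$ is moved, and only within a single valid subset $X'$; the edge $h_a$ is never relocated, so $s$ stays fixed. If $X'$ is a singleton of $V_\mathrm{CV}$ there is nothing to prove. If $X'$ is the interior of a leaf-block or an isolated block of $G_\mathrm{UN}$, then this block, being $2$-connected, lies inside a single block of both $M_i(a)-h_a$ and $M_{i+1}(a)-h_a$, so relocating $h_{i+1}$'s endpoint within it changes neither the ordered sequence of blocks of the block path nor which block contains any fixed vertex; in particular $B(s,y)$ is unaffected. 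The remaining and principal case is when $X'$ is the interior of a degree-two block path $H$ of $G_\mathrm{UN}$.

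For that case I would invoke the rule defining $M(a)$: after orienting $H$ so that the block of $H$ containing the old endpoint $x_i$ has index not smaller than that of the block containing $y'$, the new endpoint $x_i'$ is taken with block-index in $H$ as large as possible subject to the graph remaining $2$-connected; since $x_i$ itself is a feasible choice (the graph $M_i(a)$ is $2$-connected), $x_i'$ lies in a block of $H$ of index at least that of $x_i$, i.e.\ the endpoint of $h_{i+1}$ is pushed farther out along $H$. Combined with the Observation that $M_i(a)-h_a$ has at least as many blocks as $M_{i+1}(a)-h_a$, this shows the block path is merely coarsened and that only a sub-block-path of $H$ lying beyond $x_i$ is affected; because $H$ is a degree-two block path, no Steiner edge other than $h_{i+1}$ is anchored in that portion, so neither $s$ nor $y$ changes which side of the affected region it lies on, and hence $V(B(s,y))$ can only grow. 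I expect this last case to be the main obstacle: one has to verify carefully, using the degree-two structure of $H$ (its interior blocks have degree two in $(G_\mathrm{UN})_\mathrm{BCF}$), that pushing the endpoint outward and the resulting merging of blocks cannot make $y$ or $s$ ``cross'' $x$ in the block-path order, and therefore cannot evict $x$ from the interval $B(s,y)$.
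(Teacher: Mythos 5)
Your proposal follows the same step-by-step inductive structure as the paper's proof, and in particular hinges on the same lever: the Observation that the number of blocks of $M_i(a)-h_a$ is non-increasing in $i$. The reduction of the path condition to ``$x$ lies in the sub-block-path between $s$ and $y$'' is a clean and correct reformulation of what the paper does implicitly, and the first two valid-subset cases (a singleton of $V_\mathrm{CV}$, the interior of a leaf or isolated block) are disposed of correctly.

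The degree-two block-path case, however, which you yourself flag as the main obstacle, has a genuine gap. The claim that ``because $H$ is a degree-two block path, no Steiner edge other than $h_{i+1}$ is anchored in that portion'' is false: the degree-two condition is a property of $(G_\mathrm{UN})_\mathrm{BCF}$ alone, and other Steiner edges of $N_\mathcal{Q}$ can be incident to terminals in the interior of $H$ (handling exactly this situation is what Function \texttt{Order} is for). Moreover, the inequality you do extract from the definition of $M(a)$, namely that the $H$-index of $x_i'$ is at least that of $x_i$, is not the one the induction needs. What must be shown is that $x_i'$ does not land on the $s$-side of $x$ in the block path obtained from $B-h_i$, where $B$ is the block of $M_i(a)-h_a$ containing $h_i$. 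The paper derives precisely this from the Observation: were $x_i'$ to fall in a block of index smaller than $j_x$ (or at the shared cut-vertex), the graph $M_{i+1}(a)-h_a$ would contain strictly more blocks than $M_i(a)-h_a$, a contradiction. Your sketch never makes that step, and the substitute claim offered in its place does not hold. So the route is the paper's, but the inequality that actually closes the induction in the hard case is missing.
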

\begin{proof}
We show that this path property is preserved at each step of the process which converts $N_\mathcal{Q}$ to $M(a)$. Consider step $i$, where the terminal endpoint of $h_i$ is relocated. Once again, let $x_i$ be the terminal endpoint of $h_i$ in $N_\mathcal{Q}$, and let $x_i'$ be its endpoint in $M(a)$. There are four different cases we need to consider, represented by Figure \ref{figEgs24}. The case of Figure \ref{figEgs24a} is not possible, since it is assumed that all Steiner edges are critical. The lemma holds for Figure \ref{figEgs24b} since, by Lemma \ref{bcfIn}, $x_i$ and $x_i'$ lie in the same leaf-block of $M_i(a)-h_a-h_i$. We only consider the case from Figure \ref{figEgs24d} since the reasoning for Figure \ref{figEgs24c} is similar.

We consider the subcase when $x$ and $y$ are located as in Figure \ref{figEgs24d}; the remaining subcases are similar. Observe that $h_i$ is an edge of $P'$, and it occurs on the sub-path of $P'$ connecting $x$ and $y$. Let $B$ be the block of $M_i(a)-h_a$ containing $h_i$, and Let $B_1',...,B_{q}'$ be the block path $B-h_i$, oriented so that the index of the block containing $y$ is no larger than that of the block containing $x$.

Suppose that $x$ is contained in block $B_{j_x}'$. Then the lemma holds if $x_i'$ is in block $B_j'$, where $j\geq j_x$, but $x_i'$ is not the cut-vertex of $B_{j_x}'$ shared with $B_{j_x-1}'$; or if $x_i'$ is not in $B$ (note of course that in this case the block containing $x_i'$ must have an index larger than that of $B$, where the orientation is such that the first block of $M_i(a)-h_a$ contains $s$.) But these conditions hold, for otherwise $M_{i+1}(a)-h_a$ would have more blocks than $M_i(a)-h_a$, which contradicts the above observation.
\end{proof}

\begin{figure}[htb]
  \begin{center}
    \includegraphics[scale=0.5]{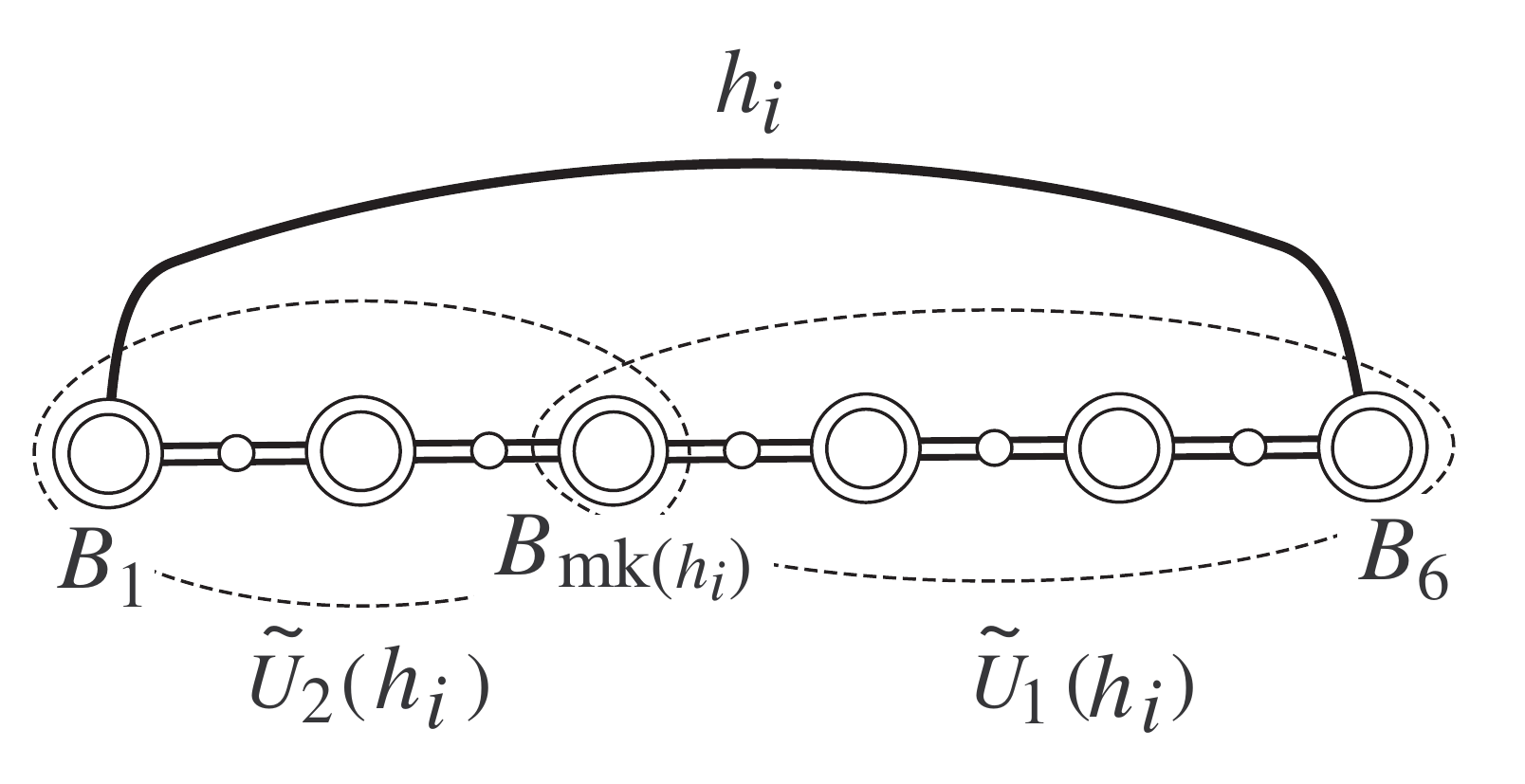}\\
  \end{center}
  \caption{An example of $\widetilde{U}_1(h_i)$ and $\widetilde{U}_2(h_i)$ when $M(Z,i)-h_i$ has $p=6$ blocks, and $V_1=\emptyset$}
  \label{figEgs10}
\end{figure}

We are now ready to formally define ``markers". Recall the recall $\preccurlyeq$ defined before Lemma \ref{isLink}. Let $\mathcal{Q}\in \Lambda$ and let $\mathcal{E}_\mathcal{Q}=\{W_j\}$ be the set of all non-empty sets of internal Steiner edges of $M_\mathcal{Q}$ containing at most $\Delta k-\vert E_\mathrm{SE}(M_\mathcal{Q})\vert$ elements. For every $h_i=s_is_i'\in W_j$, we construct (in at most $O(n^2)$ time) the block path $B_1,...,B_{p_Z}$ of $M(i)-h_i$, where $s_i\in B_1$ and $s_i'\in B_{p_Z}$. For any $h_i\in W_j$ it is assumed that there exist terminals $y,y'$ with $y'\preccurlyeq y$ with respect to the path $B_1,...,B_{p_Z}$ (note therefore that $p_Z>2$); any $W_j$ that does not have this property is removed from $\mathcal{E}_\mathcal{Q}$. Let $\mathrm{mk}({h_i})$ (the \textit{marker} for $h_i$) be a variable member of $\{2,...,p_Z-1\}$, let $\widetilde{U}_1(h_i)=B_{\mathrm{mk}({h_i})}\cup...\cup B_{p_Z}-B_{\mathrm{mk}({h_i})-1}$, and let $\widetilde{U}_2(h_i)=B_{1}\cup...\cup B_{\mathrm{mk}({h_i})}-B_{\mathrm{mk}({h_i})+1}$ (see Figure \ref{figEgs10}).

For a given set of markers $\mathcal{I}_j$ for the edges of $W_j$, let $\Phi(\mathcal{Q},\mathcal{I}_j)$ be the set of Steiner endpoint sequences constructed recursively from ${\mathcal{Q}}$ as follows. Let $T$ be the recursion tree with root ${\mathcal{Q}}_0={\mathcal{Q}}$. For every $i\geq 0$, at any $i$-th level node $w_i$ of $T$ we replace the pair $(s_{i+1},\{s_{i+1}'\})$ in $ {\mathcal{Q}}_{i}$ by $(s_{i+1}, {U}_1(h_{i+1})\cap C)$ and $(s_{i+1}', {U}_2(h_{i+1})\cap C')$, where $C,C'$ are any components of $G_\mathrm{UN}$ and where $h_{i+1}\in W_j$. The children of $w_i$ correspond to the different possible choices of $C,C'$; each distinct choice resulting in a distinct sequence $ {\mathcal{Q}}_{i+1}$. Any choices such that $ {U}_2(h_{i+1})\cap C'=\emptyset$ or $ {U}_1(h_{i+1})\cap C=\emptyset$ are discarded. This recursive process of transforming $ {\mathcal{Q}}$ into the set $\Phi(\mathcal{Q},\mathcal{I}_j)$ is referred to as Function \texttt{MarkSES}. Since $k$ and the number of components of $G_\mathrm{UN}$ are constant, the time-complexity of Function \texttt{MarkSES} is at most $O(n^2)$.

If $\mathcal{C}$ is the sequence of $\vert W_j\vert$ pairs of components of $G_\mathrm{UN}$ that were chosen in the construction of some $\mathcal{Q}'\in\Lambda'$ from $\mathcal{Q}$ then use the notation $\phi(\mathcal{Q},\mathcal{I}_j,\mathcal{C}):=\mathcal{Q}'$. For any linked set $\{g_{i,1},g_{i,2}\}$ derived from some internal edge $h_i\in W_j$ we employ the notation $\mathrm{mk}(g_{i,1}):=\mathrm{mk}(g_{i,2}):=\mathrm{mk}(h_i)$ for the current value of the marker of $h_i$.

\begin{theorem}\label{mLink}Let $N^*$ be a cheapest network containing $G_\mathrm{UN}$, such that all Steiner edges and chord-paths of $N^*$ are critical. Then there exists $\mathcal{Q}\in\Lambda_0$, a set of markers $\mathcal{I}_j$, and $\widehat{\mathcal{Q}}^*\in \Phi(\mathcal{Q},\mathcal{I}_j)$ such that $N^*$ is a representative of $G(\widehat{\mathcal{Q}}^*)$.
\end{theorem}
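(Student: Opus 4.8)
The plan is to peel the linked sets off $N^*$ one by one until we reach a member of $\mathcal{N}_0$, apply Theorem~\ref{mainN} there to obtain $\mathcal{Q}\in\Lambda_0$, and then recognise the reinsertion of those linked sets as a marked splitting, i.e.\ as the passage from $\mathcal{Q}$ to some $\widehat{\mathcal{Q}}^*\in\Phi(\mathcal{Q},\mathcal{I}_j)$. First I would reduce to the case $N^*\in\mathcal{N}$: since deleting non-critical edges and degree-two chord paths preserves $2$-connectivity and does not increase the bottleneck, and since (arguing as for Proposition~\ref{specN}) we may take every Steiner point of a cheapest network containing $G_\mathrm{UN}$ to have degree at most $\Delta$, we may assume $N^*$ has at most $\Delta k$ Steiner edges with all Steiner edges and chord paths critical, so $N^*\in\mathcal{N}$. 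If $N^*$ has no linked set then $N^*\in\mathcal{N}_0$, and Theorem~\ref{mainN} gives $\mathcal{Q}\in\Lambda_0$ with $N^*$ a representative of $G(\mathcal{Q})$; the theorem then holds with $W_j=\emptyset$, so $\Phi(\mathcal{Q},\mathcal{I}_j)=\{\mathcal{Q}\}$ and $\widehat{\mathcal{Q}}^*=\mathcal{Q}$. Assume henceforth that $N^*$ has a linked set.

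By the proposition immediately preceding this theorem there are $N\in\mathcal{N}_0$ and a sequence $E_\mathrm{\,I}=\langle e_1,\dots,e_m\rangle$ of internal Steiner edges of $N$ with $N^*$ a split of $N$ with respect to $E_\mathrm{\,I}$; write $\widehat N(0)=N,\dots,\widehat N(m)=N^*$ for the associated chain, where $\widehat N(\ell)$ is obtained from $\widehat N(\ell-1)$ by replacing $e_\ell=s_\ell s_\ell'$ by a linked set $\{g_\ell=s_\ell y_\ell,\,g_\ell'=s_\ell'y_\ell'\}$ with $y_\ell'\preccurlyeq y_\ell$ relative to the block path $\widehat N(\ell-1)-e_\ell$. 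By Theorem~\ref{mainN}, $N$ is a representative of $G(\mathcal{Q})$ for some $\mathcal{Q}\in\Lambda_0$; let $h_{i(\ell)}$ be the labelled internal Steiner edge of $G(\mathcal{Q})$ corresponding to $e_\ell$, and set $W:=\{h_{i(1)},\dots,h_{i(m)}\}$, a set of internal Steiner edges of $M_\mathcal{Q}$. Each split adds exactly one Steiner edge, so $|E_\mathrm{SE}(N^*)|=|E_\mathrm{SE}(N)|+m=|E_\mathrm{SE}(M_\mathcal{Q})|+m\le\Delta k$, whence $|W|\le\Delta k-|E_\mathrm{SE}(M_\mathcal{Q})|$; thus $W$ is a candidate member of $\mathcal{E}_\mathcal{Q}$, provided it survives the pruning, which is verified next.

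Fix $\ell$, take $N$ itself as the $2$-connected representative from which the canonical representatives $M(\cdot)$ are built, and let $B_1,\dots,B_{p_Z}$ be the block path of $M(i(\ell))-h_{i(\ell)}$ (with $s_\ell\in B_1$, $s_\ell'\in B_{p_Z}$). I would transport the relation $y_\ell'\preccurlyeq y_\ell$ to this block path in two moves: passing from $\widehat N(\ell-1)-e_\ell$ to $N-h_{i(\ell)}$ amounts to collapsing the linked sets $\{g_j,g_j'\}$, $j<\ell$, and Lemma~\ref{isLink} ensures these collapses do not disturb the $\preccurlyeq$-relation associated with $e_\ell$ (equivalently, the existence of a path from $s_\ell$ through $y_\ell'$ to $y_\ell$); passing from $N-h_{i(\ell)}$ to $M(i(\ell))-h_{i(\ell)}$ preserves such a path by Lemma~\ref{keepOrd} together with the block-count Observation preceding it. Hence $y_\ell'\preccurlyeq y_\ell$ holds relative to $B_1,\dots,B_{p_Z}$, so $W\in\mathcal{E}_\mathcal{Q}$ and there is a marker $\mathrm{mk}(h_{i(\ell)})\in\{2,\dots,p_Z-1\}$ with $y_\ell\in\widetilde U_1(h_{i(\ell)})$ and $y_\ell'\in\widetilde U_2(h_{i(\ell)})$. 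Let $\mathcal{I}_j$ be the resulting markers, and in Function~\texttt{MarkSES} choose, at the step handling $h_{i(\ell)}$, the component pair $(C_\ell,C_\ell')$ of $G_\mathrm{UN}$ to be the ones containing $y_\ell$ and $y_\ell'$; put $\widehat{\mathcal{Q}}^*:=\phi(\mathcal{Q},\mathcal{I}_j,\mathcal{C})$. Every Steiner edge of $N^*$ other than the $g_\ell,g_\ell'$ is inherited unchanged from $N$, so its terminal endpoint lies in the valid subset assigned by $\mathcal{Q}$ (left untouched by Function~\texttt{MarkSES}); and $y_\ell\in\widetilde U_1(h_{i(\ell)})\cap C_\ell$, $y_\ell'\in\widetilde U_2(h_{i(\ell)})\cap C_\ell'$ are precisely the valid subsets that $\widehat{\mathcal{Q}}^*$ assigns to the two pairs replacing $(s_\ell,\{s_\ell'\})$. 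Therefore $N^*$ is a representative of $G(\widehat{\mathcal{Q}}^*)$, as required.

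The main obstacle is the transport of the orderings in the third paragraph: the relations $y_\ell'\preccurlyeq y_\ell$ are initially only known with respect to the successively split graphs $\widehat N(\ell-1)$, whereas the markers are defined in terms of the canonical block paths $M(i(\ell))-h_{i(\ell)}$. Making this rigorous needs careful bookkeeping of how collapsing one linked set reshapes the block paths attached to the remaining internal edges, which is exactly what the stability of the linked-set/$\preccurlyeq$ condition (Lemma~\ref{isLink}) and the path-preservation of Lemma~\ref{keepOrd} with the block-count Observation are designed to control.
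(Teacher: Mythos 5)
Your proof follows essentially the same route as the paper's: invoke the proposition preceding the theorem to express $N^*$ as a split of some $N\in\mathcal{N}_0$ along a sequence $E_{\mathrm{I}}$ of internal Steiner edges, apply Theorem~\ref{mainN} to obtain $\mathcal{Q}\in\Lambda_0$ with $N$ a representative of $G(\mathcal{Q})$, transport the $\preccurlyeq$-orderings onto the canonical block paths $M(i)-h_i$ using Lemma~\ref{keepOrd}, and then read off the markers and component pairs that make Function~\texttt{MarkSES} produce $\widehat{\mathcal{Q}}^*$. Your write-up is in fact more careful than the paper's own two-sentence argument in a few places: you explicitly reduce to $N^*\in\mathcal{N}$ (the paper leaves the bound $\vert E_{\mathrm{SE}}(N^*)\vert\leq\Delta k$ implicit), you verify $W\in\mathcal{E}_{\mathcal{Q}}$ by counting, and you separate the transport of $\preccurlyeq$ into the two steps (collapsing prior splits, then passing from $N$ to $M(i(\ell))$); the paper compresses all of this into ``as a consequence of Lemma~\ref{keepOrd}''. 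Your flagged concern about the first transport step -- that Lemma~\ref{isLink} addresses replacing endpoints of a single linked set rather than the effect of collapsing one linked set on the block path attached to a different internal edge -- is a fair observation, but the paper's proof glosses over the same point, so it does not constitute a deviation from the authors' argument.
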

\begin{proof}
Let $N\in \mathcal{N}_0$ and $E_\mathrm{I}$ be a set of internal Steiner edges of $N$ such that $N^*$ is a split of $N$ with respect to $E_\mathrm{I}$. Let $\mathcal{Q}\in \Lambda_0$ be a Steiner endpoint sequence for $N$, and construct $M(i)$ for every $i$. For any $h_i\in E_\mathrm{I}$, let $y_i,y_i'$ be the endpoints of the external Steiner edges of $N^*$ that result from splitting $h_i$. Suppose, without loss of generality that $y_i\preccurlyeq y_i'$. As a consequence of Lemma \ref{keepOrd}, there exists a marker $\mathrm{mk}(h_i)$ such that $y_i\in \widetilde{U}_1(h_i)$ and $y_i'\in \widetilde{U}_2(h_i)$. Since this is true for every $i$, Function \texttt{MarkSES} will construct a $\widehat{\mathcal{Q}}^*$ such that the theorem follows.
\end{proof}

\section{The $2$-Bottleneck algorithm}\label{final}
In this section we present the $2$-Bottleneck algorithm and prove its correctness. Besides employing Functions \texttt{BuildSES}, \texttt{MarkSES}, and \texttt{2Connect}, the $2$-Bottleneck algorithm also employs a new subroutine, Function \texttt{BinLink}, for dealing with linked sets. In turn, Function \texttt{BinLink} depends on the recursive Function \texttt{CalcOpt}, which we present next. In simple terms, Function \texttt{BinLink} employs a binary search on the markers for each linked set in order to find optimal locations for the markers with respect to a given $\mathcal{Q}\in\Lambda_0$.
%modify slightly so that 2Connect calculates the optimal. then dont need for loop at line 2.

\begin{algorithm}[H]
\SetAlgoLined
\NoCaptionOfAlgo
\KwIn{$\mathcal{Q}\in\Lambda_1$, $\mathcal{I},\mathcal{I}_\mathrm{max}$, and a sequence $\mathcal{C}$ of component pairs of $G_\mathrm{UN}$}
\tcc*[h]{\small{CalcOpt acts on global variables OptLength and $G_\mathrm{OPT}$ from Algorithm BinLink}, and therefore has no direct output}

Construct $\mathcal{Q}':= \phi(\mathcal{Q},\mathcal{I},\mathcal{C})$ using Function \texttt{MarkSES}\;
{
Construct $G_\mathrm{OPT}^2(\mathcal{Q}')$ using Function \texttt{2Connect}\;\label{thisline}
\If{$\ell_{\max}(G_\mathrm{OPT}^2(\mathcal{Q}'))< \mathrm{OptLength}$}
{
$G_\mathrm{OPT}:=G_\mathrm{OPT}^2(\mathcal{Q}')$\;
OptLength $:=\ell_{\max}(G_\mathrm{OPT}^2(\mathcal{Q}'))$\;
}
Let $T$ be a component of the Steiner topology of $G_\mathrm{OPT}^2(\mathcal{Q}')$ such that a longest Steiner edge of $G_\mathrm{OPT}^2(\mathcal{Q}')$ is in $T$\;\label{lineT}
\For{every external Steiner edge $e$ of $T$ such that $\{e,\hat{e}\}$ is a linked set created by Function \normalfont{\texttt{MarkSES}}}
{
{For any edge $f$ let $\mathrm{mk}(f)$ denote the marker of $f$ in $\mathcal{I}$ and let $\mathrm{mk}_\mathrm{max}(f)$ denote the marker of $f$ in $\mathcal{I}_\mathrm{max}$}\;

\If {\normalfont ($\mathrm{mk}(e)\neq \mathrm{mk}_\mathrm{max}(e)$) \textbf{and} ($\mathrm{mk}(\hat{e})\neq \mathrm{mk}_\mathrm{max}(\hat{e})$)}
{
\eIf{$\vert \mathrm{mk}(e)- \mathrm{mk}_\mathrm{max}(e)\vert = 1$}
{$I:=\{\mathrm{mk}(e),\mathrm{mk}_\mathrm{max}(e)\}$}
{Let $I$ be the singleton containing $\lfloor(\mathrm{mk}(e)+\mathrm{mk}_\mathrm{max}(e))/2\rfloor$}
\For{every $\mathrm{mk}'(e)\in I$}
{\label{lineF}
Let $\mathcal{I}^{\,\prime}$ be the marker set that results from $\mathcal{I}$ after replacing $\mathrm{mk}(e)$ and $\mathrm{mk}(\hat{e})$ by $\mathrm{mk}'(e)$\;\label{lined}
Let $\mathcal{I}_\mathrm{max}^{\,\prime}$ be the marker set that results from $\mathcal{I}_\mathrm{max}$ after replacing $\mathrm{mk}_\mathrm{max}(\hat{e})$ by $\mathrm{mk}(e)$\;
Call Function \texttt{CalcOpt} with input $\mathcal{Q},\mathcal{I}^{\,\prime},\mathcal{I}_\mathrm{max}^{\,\prime}$, and $\mathcal{C}$\;\label{lineb}
}
}
}

}
\caption{\textbf{Function} CalcOpt}
\end{algorithm}

\newpage

\begin{algorithm}[H]
\SetAlgoLined
\NoCaptionOfAlgo
\KwIn{A Steiner endpoint sequence $\mathcal{Q}\in\Lambda_0$}
\KwOut{A graph $G_\mathrm{OPT}$ which is a cheapest graph in $\{G_\mathrm{OPT}^2(\mathcal{Q}'):\mathcal{Q}'\in\Phi(\mathcal{Q},\mathcal{I}_j); \mathcal{I}_j \text{ is a marker set for the edges of $M_\mathcal{Q}$}\}$}

Let $\mathrm{OptLength} := \infty$\;

\For{every $W_j\in\mathcal{E}_\mathcal{Q}$}
{
\For{every sequence $\mathcal{C}$ of $|W_j|$ pairs of components of $G_\mathrm{UN}$}
{
Let $\mathcal{I}$ be the marker set such that for every $h_i\in W_j$ the marker for $h_i$ is a median of $\{2,...,p_Z-1\}$, where the block path of $M(Z,i)-h_i$ is $B_1,...,B_{p_Z}$\;\label{lineZ}
Let $\mathcal{I}_\mathrm{max}$ be the set of markers $\mathrm{mk}_\mathrm{max}(\cdot)$ such that, for every $i$, if $\{g_{i,1},g_{i,2}\}$ is the linked set that is to replace internal edge $h_i$ in Function \texttt{MarkSES}, where $g_{i,1}$ is incident to $B_1$, then $\mathrm{mk}_\mathrm{max}(g_{i,1}):= 1$ and $\mathrm{mk}_\mathrm{max}(g_{i,2}):= p_Z$\;
Call Function \texttt{CalcOpt} with input $\mathcal{Q},\mathcal{I},\mathcal{I}_\mathrm{max}, \mathcal{C}$\;
}
}
\caption{\textbf{Function} BinLink \label{binLink}}
\end{algorithm}

\begin{proposition}For any $\mathcal{Q}\in\Lambda_0$, Function \normalfont{\texttt{BinLink}} \textit{correctly computes $G_\mathrm{OPT}$, a cheapest graph in $\{G_\mathrm{OPT}^2(\mathcal{Q}'):\mathcal{Q}'\in\Phi(\mathcal{Q},\mathcal{I}_j); \mathcal{I}_j \text{ is a marker set for the edges of $M_\mathcal{Q}$}\}$}.
\end{proposition}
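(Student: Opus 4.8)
The plan is to establish correctness of Function \texttt{BinLink} by showing two things: (i) every sequence $\mathcal{Q}'$ that \texttt{BinLink} ever feeds into Function \texttt{2Connect} lies in $\Phi(\mathcal{Q},\mathcal{I}_j)$ for some marker set $\mathcal{I}_j$, so that \texttt{BinLink} never reports a graph cheaper than the true optimum; and (ii) \texttt{BinLink} does in fact visit a marker set $\mathcal{I}_j^\star$ achieving the minimum of $\ell_\mathrm{max}$ over all $\mathcal{Q}'\in\bigcup_j\Phi(\mathcal{Q},\mathcal{I}_j)$. Claim (i) is immediate from the construction: \texttt{CalcOpt} only ever calls $\phi(\mathcal{Q},\mathcal{I}',\mathcal{C})$ via Function \texttt{MarkSES} with a legitimate marker set $\mathcal{I}'$ and component sequence $\mathcal{C}$, and by the discussion preceding Theorem~\ref{mLink} the output of \texttt{MarkSES} lies in the appropriate $\Phi(\mathcal{Q},\mathcal{I}_j)$; since \texttt{2Connect} returns $G_\mathrm{OPT}^2(\mathcal{Q}')$, which by Lemma~\ref{atVertex} is a cheapest $2$-connected representative of $G(\mathcal{Q}')$, the value stored in \texttt{OptLength} is always attained by some admissible graph. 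So the substance is claim (ii).

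For claim (ii) I would first reduce to a single $W_j$ and a single component sequence $\mathcal{C}$: the outer two loops of \texttt{BinLink} exhaust all choices of $W_j\in\mathcal{E}_\mathcal{Q}$ and all sequences $\mathcal{C}$ of $|W_j|$ component pairs, so it suffices to show that for each fixed pair $(W_j,\mathcal{C})$ the recursive \texttt{CalcOpt} calls explore a marker vector $\mathcal{I}$ minimising $\ell_\mathrm{max}(G_\mathrm{OPT}^2(\phi(\mathcal{Q},\mathcal{I},\mathcal{C})))$ over all marker vectors for the edges of $W_j$. The key structural fact to isolate is a \emph{monotonicity in each marker coordinate}: by the Monotonicity property and the construction of $\widetilde{U}_1(h_i),\widetilde{U}_2(h_i)$, increasing $\mathrm{mk}(h_i)$ shrinks $\widetilde{U}_1$ and enlarges $\widetilde{U}_2$ (and symmetrically), so that moving a marker toward its extreme value $\mathrm{mk}_\mathrm{max}$ can only enlarge the valid subsets assigned to the edges of the associated linked set, hence can only decrease (weakly) the achievable bottleneck contributed by that linked set's Steiner component. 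This is exactly why \texttt{CalcOpt}, at lineT, selects the Steiner component $T$ carrying a longest Steiner edge and then (lineF–lineb) recurses on the markers of the linked edges of $T$: if the current bottleneck is realised inside $T$ and some linked-edge marker of $T$ has not yet reached its $\mathrm{mk}_\mathrm{max}$, then pushing that marker further toward $\mathrm{mk}_\mathrm{max}$ is a ``free'' improvement direction, and the binary-search bracketing $[\mathrm{mk}(e),\mathrm{mk}_\mathrm{max}(e)]$ (halving at each recursive call, terminating when the interval has width $1$) is guaranteed to contain an optimal marker value because of this monotonicity.

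I would then argue termination and completeness of the search together: each recursive call either halves some bracket $|\mathrm{mk}(e)-\mathrm{mk}_\mathrm{max}(e)|$ or, when that difference is $1$, tries both endpoints and resets the ``max'' endpoint (line with $\mathcal{I}_\mathrm{max}^{\,\prime}$), so the recursion depth is $O(\log n)$ per linked set and at most $O(\log^{?} n)$ overall since $|W_j|=O(1)$; along the way every marker vector that could possibly be optimal — namely one in which, for the bottleneck component, no further monotone improvement is available — is visited, since the binary search never discards the half of the interval containing the optimum. Finally, combining with Theorem~\ref{mLink}: a globally cheapest critical network $N^\star$ containing $G_\mathrm{UN}$ is a representative of some $G(\widehat{\mathcal{Q}}^\star)$ with $\widehat{\mathcal{Q}}^\star\in\Phi(\mathcal{Q},\mathcal{I}_j)$ for suitable $\mathcal{Q}\in\Lambda_0$ and markers $\mathcal{I}_j$, so $\ell_\mathrm{max}(N^\star)\ge\ell_\mathrm{max}(G_\mathrm{OPT}^2(\widehat{\mathcal{Q}}^\star))$; since \texttt{BinLink} examines a marker set at least as good as $\mathcal{I}_j$, its returned $G_\mathrm{OPT}$ satisfies $\ell_\mathrm{max}(G_\mathrm{OPT})\le\ell_\mathrm{max}(G_\mathrm{OPT}^2(\widehat{\mathcal{Q}}^\star))\le\ell_\mathrm{max}(N^\star)$, and combined with claim (i) we get equality.

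I expect the main obstacle to be making the monotonicity/exchange argument in the previous paragraph fully rigorous — specifically, verifying that when \texttt{CalcOpt} recurses only on the linked edges of the single chosen component $T$ (rather than all components), no optimal marker configuration is missed. The subtle point is that after adjusting a marker of $T$ the bottleneck may migrate to a different component $T'$, and one must argue (by an inductive/potential-function argument on the total ``marker deficit'' $\sum_e(\mathrm{mk}_\mathrm{max}(e)-\mathrm{mk}(e))$, or on $\ell_\mathrm{max}$ itself under the monotone improvement) that the sequence of component-local binary searches still converges to a globally optimal marker vector; this is where the interplay between the local binary search and the global bottleneck must be handled carefully.
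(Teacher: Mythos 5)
Your overall skeleton matches the paper: claim (i) is indeed immediate, and the reduction to a single fixed pair $(W_j,\mathcal{C})$ before analysing the recursive \texttt{CalcOpt} calls is exactly how the paper proceeds. The problem is with your central ``monotonicity in each marker coordinate'' claim and with the step you yourself flag as the main obstacle, which your plan does not actually close.

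The monotonicity claim is wrong as stated. Increasing $\mathrm{mk}(h_i)$ shrinks $\widetilde{U}_1(h_i)$ and enlarges $\widetilde{U}_2(h_i)$; but $g_{i,1}$ and $g_{i,2}$ draw their valid subsets from $\widetilde{U}_1$ and $\widetilde{U}_2$ respectively, so any move helps one edge and hurts the other. There is no single ``extreme value $\mathrm{mk}_\mathrm{max}$'' shared by the pair: $\mathrm{mk}_\mathrm{max}(g_{i,1})=1$ and $\mathrm{mk}_\mathrm{max}(g_{i,2})=p_Z$ are opposite ends of the same interval, serving as the two bracket endpoints of the binary search, with the shared current marker $\mathrm{mk}(e)=\mathrm{mk}(\hat e)$ sitting between them. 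Moreover the two edges of a split linked set lie in \emph{different} components of the acyclic Steiner topology, so a marker move is genuinely a trade-off between two components' bottlenecks, not a ``free improvement direction.'' Consequently, the potential-function idea of shrinking a total ``marker deficit'' does not yield a monotone global improvement, and cannot by itself prove that the recursion never discards the optimum.

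The paper's actual argument replaces this with a bracketing invariant. It defines a property (Property $L_i$) asserting the existence of an optimal marker set $\mathcal{I}^*$ satisfying $\mathrm{mk}^i_{\max}(e)\le\mathrm{mk}^*(e)\le\mathrm{mk}^i_{\max}(\hat e)$ for every linked set, i.e.\ the current binary-search brackets always contain an optimal marker. It proves this is preserved at each step by the key observation you are missing: if the bottleneck of $G_{i+1}$ lies in component $T$ and $G_{i+1}$ is not yet cheapest, then there must be some linked edge $e$ of $T$ (left edge with $\mathrm{mk}^*(e)\le\mathrm{mk}^i(e)$, or right edge with $\mathrm{mk}^*(e)\ge\mathrm{mk}^i(e)$), since otherwise every valid subset feeding $T$ would be at least as large currently as at $\mathcal{I}^*$ and the Monotonicity Property would give $\ell_{\max}(T)\le\ell_{\max}(G^*)$, contradicting suboptimality. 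This is precisely the half of the bracket that the recursive call in \texttt{CalcOpt} explores, so $L_{i+1}$ holds. Finally, at the terminal call either $T_q$ has no linked sets (so $G_q$ is already optimal by the Monotonicity Property), or every remaining bracket has width one and both endpoints were probed, placing $\mathcal{I}^*$ among the visited marker sets. Your proposal correctly names the obstacle (bottleneck migration after a marker move) but supplies neither the bracketing invariant nor the exchange argument that a suboptimal bottleneck in $T$ forces an optimal marker onto the $T$-favouring side; without those two ingredients the proof does not go through.
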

\begin{proof}
Since Function \texttt{BinLink} considers every $W_j$ and every $\mathcal{C}$, correctness will follow if we show that Function \texttt{CalcOpt} correctly finds a cheapest network with respect to fixed $W_j$ and $\mathcal{C}$. %which is defined as...
Let $G_1,...G_q$ be a maximal sequence of graphs constructed by consecutive calls of Line \ref{thisline} in Function \texttt{CalcOpt}. In other words, if $T_q$ is the tree constructed in Line \ref{lineT} in the same call of Function \texttt{CalcOpt} that constructs $G_q$, then either $T_q$ has no liked sets that were created by Function \texttt{MarkSES}, or, for every such linked set $\{e,\hat{e}\}$ with $e$ in $T_q$, we have $\mathrm{mk}^q(e)=\mathrm{mk}^q_\mathrm{max}(e)$ or $\mathrm{mk}^q(\hat{e})=\mathrm{mk}^q_\mathrm{max}(\hat{e})$. Let $\mathcal{I}(i),\mathcal{I}_{\max}(i)$ be the marker sets $\mathcal{I},\mathcal{I}_\mathrm{max}$ in the same call of Function \texttt{CalcOpt} that constructs $G_i$. For any $e$ we use the notation $\mathrm{mk}^i(e)$ and $\mathrm{mk}_{\max}^i(e)$ to refer to $e$'s marker in $\mathcal{I}(i),\mathcal{I}_{\max}(i)$ respectively. For any linked set $\{g_{i,1},g_{i,2}\}$ we refer to $g_{i,1}$ as a \textit{left edge} and $g_{i,2}$ as a \textit{right edge}.

We define the following property: Property $L_i$ is satisfied if there exists a sequence $G_1,...,G_i$ and a set of markers $\mathcal{I}^*$ (called \textit{optimal markers}) such that, for every linked set $\{e,\hat{e}\}$ created by Function \texttt{MarkSES} (where, without loss of generality, $e$ is a left edge and $\hat{e}$ is a right edge), the marker of $e$ in $\mathcal{I}^*$, say $\mathrm{mk}^*(e)$, satisfies $\mathrm{mk}_{\max}^i(e)\leq \mathrm{mk}^*(e)\leq \mathrm{mk}_{\max}^i(\hat{e})$, and such that a cheapest network with respect to $W_j$ and $\mathcal{C}$ has $\mathcal{I}^*$ as a marker set. We claim that the proposition will immediately follow if Property $L_q$ holds, where $G_1,...,G_q$ is maximal. Suppose first that $T_q$ contains no linked sets that were created by Function \texttt{MarkSES}. But then, since $\ell_\mathrm{max}(T_q)\leq \ell_\mathrm{max}(G')$ for all $G'\in\{G_\mathrm{OPT}^2(\mathcal{Q}'):\mathcal{Q}'\in\Phi(\mathcal{Q},\mathcal{I}_j); \mathcal{I}_j \text{ is a marker set for the edges of $M_\mathcal{Q}$}\}$, we may set $G_\mathrm{OPT}:=G_q$ and the proposition follows. If $T_q$ does contain linked sets that were created by Function \texttt{MarkSES} then
$\mathrm{mk}^q(e)=\mathrm{mk}^q_\mathrm{max}(e)$ or $\mathrm{mk}^q(\hat{e})=\mathrm{mk}^q_\mathrm{max}(\hat{e})$ for every such linked set $\{e,\hat{e}\}$. Therefore $\vert\mathrm{mk}^q_\mathrm{max}(\hat{e})-\mathrm{mk}^q_\mathrm{max}({e})\vert=1$ so that $\mathrm{mk}^*(e)\in\{\mathrm{mk}^q_\mathrm{max}(\hat{e}),\mathrm{mk}^q_\mathrm{max}({e})\}$. But both of these markers are considered in Line \ref{lineF} by the previous call of Function \texttt{CalcOpt} that moved the marker of $e$ or $\hat{e}$, and therefore the claim follows.

We now use induction on the $G_i$. Clearly the base case for $G_1$ holds since $\mathrm{mk}_{\max}^1(e)=1$ and $\mathrm{mk}_{\max}^1(\hat{e})=p_Z$ for any linked set $e,\hat{e}$, where $p_Z$ is defined as in Line \ref{lineZ} of Function \texttt{BinLink}. Suppose Property $L_i$ holds for some $1\leq i<q$ and suppose that $G_{i+1}$ is not a cheapest network with respect to $W_j,\mathcal{C}$ (for otherwise Property $L_{i+1}$ holds and the proposition follows immediately). Let $T$ contain a longest edge of $G_{i}$. If $T$ contains no linked sets then $G_{i+1}$ is a cheapest network with respect to $W_j,\mathcal{C}$. We claim therefore that, for some $\mathcal{I}^*$ satisfying Property $L_i$, there exists an edge $e$ of $T$ contained in a linked set such that $\mathrm{mk}^*(e)\leq \mathrm{mk}^{i}(e)$ if $e$ is a left edge or $\mathrm{mk}^*(e)\geq \mathrm{mk}^{i}(e)$ if $e$ is a right edge. For otherwise, by the Monotonicity Property, $\ell_{\max}(T)\leq \ell_{\max}(G^*)$, where $G^*$ is an optimal network with respect to $\mathcal{I}^*$ (i.e., $G^*$ is a cheapest network with respect to $W_j$ and $\mathcal{C}$), which would imply that $G_{i+1}$ is a cheapest network with respect to $W_j$ and $\mathcal{C}$. Therefore the claim holds and, since Function \texttt{CalcOpt} considers all linked sets of $T$, it follows that Property $L_{i+1}$ holds for $\mathcal{I}^*$. Therefore, by induction, Property $L_q$ holds and the proposition follows.
\end{proof}

We now present our $2$-Bottleneck algorithm. For any $d$ in some interval of integers $L=[d_1,...,d_2]$, an \textit{upper} median (respectively \textit{lower} median) of $L$ with respect to $d$ is a median of $[d,...,d_2]$ (respectively $[d_1,...,d]$). Recall that $\Delta=7$ if we are working in the $L_1$ or $L_\infty$ norms, and $\Delta=5$ otherwise.\\

\begin{algorithm}[H]
\SetAlgoLined
\NoCaptionOfAlgo
\KwIn{A set $X$ of vertices in an $L_p$ plane, and a positive integer $k$}
\KwOut{A globally optimal network $N^*$ spanning $X$ and at most $k$ Steiner points}
Let $L$ be a non-decreasing sequence of distances between pairs of vertices in $X$\;
Let $d$ be a median of $L$\;
Let $\mathrm{FoundOpt}:=0$ and let $\mathrm{OptLength2}:=\infty$\;
\While{$\mathrm{FoundOpt}=0$}
{
Construct the block-cut forest of $G_\mathrm{UN}:=[K]_d$\;
\eIf{$b(G_\mathrm{UN})>\Delta k$}
{
Let $d$ be an upper median of $L$ with respect to $d$\;
}
{
Run Function \texttt{BuildSES} on $G_\mathrm{UN}$ to get $\Lambda_0$\;
Store the potential cuts for each $G(\mathcal{Q})$, where $\mathcal{Q}\in \Lambda_0$, for later use by Function \texttt{2Connect}\;
\For{every $\mathcal{Q}\in\Lambda_0$}
{
Run Function \texttt{BinLink} with input $\mathcal{Q}$ and output $G_\mathrm{OPT}$\;
\If{$\ell_\mathrm{max}(G_\mathrm{OPT})<\mathrm{OptLength2}$}
{
$\mathrm{OptLength2}:=\ell_\mathrm{max}(G_\mathrm{OPT})$\;
$N^*:=G_\mathrm{OPT}$\;
}
}
\eIf{$\mathrm{OptLength2}\leq d$}
{
Let $d$ be a lower median of $L$ with respect to $d$. If $d$ has been considered before then let $\mathrm{FoundOpt}:=1$\;
}
{
Let $d$ be an upper median of $L$ with respect to $d$. If $d$ has been considered before then let $\mathrm{FoundOpt}:=1$\;
}

}
}
\caption{\textbf{$2$-Bottleneck Algorithm}\label{main}}
\end{algorithm}

\begin{theorem}The $2$-Bottleneck Algorithm correctly constructs a globally optimal network spanning $X$ and most $k$ Steiner points. The run time is $O(n^2\log^{\frac{7k}{2}+1}n)$ in $L_1$ and $L_\infty$, and $O(n^k\log^{\frac{5k}{2}}n)$ for all other $L_p$ planes.
\end{theorem}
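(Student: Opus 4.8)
The plan is to establish correctness and running time separately. For correctness, I would argue in two stages. First, I would show that for any fixed value $d$ of the bottleneck threshold with $b([K]_d)\le\Delta k$, the inner loop over $\mathcal{Q}\in\Lambda_0$ (calling Function \texttt{BinLink} for each) correctly computes a cheapest $2$-connected network whose underlying network is $[K]_d$ and whose Steiner edges/chord-paths are all critical: by Theorem \ref{mLink}, any such cheapest network $N^*$ is a representative of $G(\widehat{\mathcal{Q}}^*)$ for some $\widehat{\mathcal{Q}}^*\in\Phi(\mathcal{Q},\mathcal{I}_j)$ with $\mathcal{Q}\in\Lambda_0$, so it suffices that \texttt{BinLink} returns a cheapest graph in $\{G_\mathrm{OPT}^2(\mathcal{Q}'):\mathcal{Q}'\in\Phi(\mathcal{Q},\mathcal{I}_j)\}$, which is exactly the preceding proposition; and $G_\mathrm{OPT}^2(\mathcal{Q}')$ is $2$-connected and no costlier than $G_\mathrm{OPT}(\mathcal{Q}')$ by the correctness of Function \texttt{2Connect} (repeated application of Lemma \ref{atVertex}). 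Combining with Proposition \ref{specN}, Corollary \ref{leafLim}, and the observation (after Theorem \ref{Tcord}) that a globally optimal network may be taken to have all Steiner edges and chord-paths critical and $b(G_\mathrm{UN}^*)\le\Delta k$, the inner loop over $\Lambda_0$ at the correct threshold $d^*$ produces a globally optimal network.

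The second stage is to verify that the outer binary search on $d$ finds this correct threshold $d^*$. The key monotonicity fact is: the optimal bottleneck length $\ell^*(d)$ achievable with underlying network $[K]_d$ is non-increasing in $d$ (adding shorter edges can only help), and the true global optimum equals $\min_d \max(\ell^*(d),d)$, attained at the smallest $d\in L$ for which the feasibility condition $b([K]_d)\le\Delta k$ holds and $\ell^*(d)\le d$. I would check that the \texttt{While} loop's branching rule — move to a lower median when $\mathrm{OptLength2}\le d$ and to an upper median otherwise (and also upward when $b(G_\mathrm{UN})>\Delta k$, since $b([K]_d)$ is itself non-increasing in $d$) — is the standard binary search for this breakpoint, and that the termination flag $\mathrm{FoundOpt}$ set when a value of $d$ repeats guarantees the loop halts after $O(\log n)$ iterations while $N^*$ records the best network seen. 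I would note that $\mathrm{OptLength2}$ and $N^*$ persist across iterations, so the returned $N^*$ is the cheapest over all thresholds examined, which includes $d^*$.

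For the running time, I would multiply the per-iteration cost by the $O(\log n)$ outer iterations. Within one iteration: constructing the block-cut forest is $O(n^2)$ (Theorem \ref{bcftime}); Function \texttt{BuildSES} is $O(n^2)$; storing potential cuts is $O(n^2)$; and $|\Lambda_0|=O(1)$. The dominant term is the $\Lambda_0$-loop calling Function \texttt{BinLink}. Inside \texttt{BinLink}, the loops over $W_j\in\mathcal{E}_\mathcal{Q}$ and over component-pair sequences $\mathcal{C}$ each range over $O(1)$ choices, so the cost is that of the \texttt{CalcOpt} recursion. Each node of that recursion performs one \texttt{MarkSES} call ($O(n^2)$) and one \texttt{2Connect} call, whose cost equals that of constructing $G_\mathrm{OPT}(\mathcal{Q}')$: by Theorem \ref{consOp} this is $O(n^k)$ for $1<p<\infty$ and $O(n\log^2 n)$ for $p\in\{1,\infty\}$. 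The recursion is a binary search simultaneously over the markers of the (at most $\Delta k/2$) linked sets, each marker ranging over $O(n)$ values, so the recursion depth is $O(\log n)$ per marker and the number of leaves is $O(\log^{\Delta k/2} n)=O(\log^{7k/2}n)$ for $L_1,L_\infty$ and $O(\log^{5k/2}n)$ otherwise. Multiplying: for $L_1,L_\infty$ we get $O(\log n)\cdot O(\log^{7k/2}n)\cdot O(n^2)=O(n^2\log^{7k/2+1}n)$ (the $n^2$ from \texttt{MarkSES}/\texttt{2Connect} preprocessing dominates $n\log^2 n$); for $1<p<\infty$ we get $O(\log n)\cdot O(\log^{5k/2}n)\cdot O(n^k)=O(n^k\log^{5k/2}n)$ (here $n^k$ dominates $n^2$ for $k\ge 3$, and the extra $\log n$ factor is absorbed since the exponent $5k/2$ is not tight — or more carefully, one accounts for it as stated).

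I expect the main obstacle to be the running-time bookkeeping for the nested binary searches in \texttt{CalcOpt}: one must argue carefully that moving the marker of one linked set toward its bound, while resetting the "max" bound of its partner edge, genuinely shrinks the search interval geometrically and that the total number of \texttt{CalcOpt} leaf-calls is $O(\log^{\lfloor\Delta k/2\rfloor}n)$ rather than, say, $n^{O(1)}$ — this relies essentially on Property $L_q$ from the preceding proposition, which guarantees the optimal marker always lies between the current bounds so that the binary search is valid. A secondary subtlety is confirming that the $\log n$ factors from the outer threshold search and from the marker searches are correctly combined into the stated exponents and that the polynomial preprocessing terms ($O(n^2)$ from block-cut forests and \texttt{MarkSES}) are properly absorbed in each regime.
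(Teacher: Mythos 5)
Your proposal follows essentially the same approach as the paper's proof: correctness by combining the preceding lemmas (Theorem~\ref{mLink}, the \texttt{BinLink} proposition, Lemma~\ref{atVertex}, Proposition~\ref{specN}, Corollary~\ref{leafLim}) with the monotonicity of the longest-Steiner-edge length in the underlying network to justify the outer binary search on $L$, and the running time by multiplying $O(\log n)$ outer iterations by the cost of \texttt{BinLink}'s nested marker searches. The one bookkeeping point you flag but leave unresolved is handled in the paper by the observation that, since $G_\mathrm{UN}$ has at least two leaf blocks, there can be at most $\Delta k/2-1$ (rather than $\Delta k/2$) markers, which removes the spare $\log n$ factor in the $1<p<\infty$ bound.
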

\begin{proof}
We only still need to show that a globally optimal underlying network is found by the $2$-Bottleneck algorithm. Let $G_0,G_1$ be two underlying network such that $G_0$ is a subgraph of $G_1$. Let $N_0^*$ be a cheapest network containing $G_0$ and let $N_1^*$ be a cheapest network containing $G_1$. Then, similarly to the Monotonicity Property, the length of a longest Steiner edge in $N_1^*$ is no longer than the length of a longest Steiner edge in $N_0^*$. Therefore the $2$-Bottleneck algorithm correctly performs a binary search on the elements of $L$, which, in turn, are used to construct the underlying networks. Therefore, since Function \texttt{BinLink} is correct, the $2$-Bottleneck algorithm is also correct.

The complexity of the loop in Line 4 is $O(\log n)$, since a binary search is performed on $L$. Function BuildSES runs in $O(n^2)$ time and storing the potential cuts takes $O(n^2)$ time. Observe that since $G_\mathrm{UN}$ contains at least two leaf blocks, there can be at most ${\Delta k}/{2}-1$ markers. Therefore Function \texttt{BinLink} runs in a time of $O(\log n\times\log^{\frac{\Delta k}{2}-1} n\times f(n,k))$, where $f(n,k)$ is the complexity of finding $G_\mathrm{OPT}(\mathcal{Q})$ (provided in Theorem \ref{consOp}). Therefore the theorem follows.
\end{proof}

\section{Conclusion}
In this paper we present the first exact polynomial time algorithm for constructing optimal bottleneck $2$-connected $k$-Steiner networks in $L_p$ planes when $k$ is constant. The algorithm runs in $O(n^2\log^{\frac{7k}{2}+1}n)$ steps in $L_1$ and $L_\infty$, and in $O(n^k\log^{\frac{5k}{2}}n)$ steps for all other $L_p$ planes. This significantly extends and generalises the results of Bae et al. \cite{bae1} and Brazil et al. \cite{brazil}, which solve the $1$-connected case, and Brazil et al. \cite{brazil2} which solves the $2$-connected case for $k\leq 2$.

\end{document}